\newtheorem{theorem}{Theorem}[section]
\newtheorem{proposition}[theorem]{Proposition}
\newtheorem{lemma}[theorem]{Lemma}
\theoremstyle{definition}
\newtheorem{definition}[theorem]{Definition}
\newtheorem{remark}[theorem]{Remark}
\newtheorem{assumption}{Assumption}[section]
\numberwithin{equation}{section}
\def\Xint#1{\mathchoice
	{\XXint\displaystyle\textstyle{#1}}%
	{\XXint\textstyle\scriptstyle{#1}}%
	{\XXint\scriptstyle\scriptscriptstyle{#1}}%
	{\XXint\scriptscriptstyle\scriptscriptstyle{#1}}%
	\!\int}
\def\XXint#1#2#3{{\setbox0=\hbox{$#1{#2#3}{\int}$ }
		\vcenter{\hbox{$#2#3$ }}\kern-.6\wd0}}
\def\dashint{\Xint-}
\def\:{\colon}
\def\d{\,\mathrm{d}}
\def\R{\mathbb{R}}
\author{\textsc{José A.~Cañizo}\\
  Departamento de Matem\'atica Aplicada\\
  Universidad de Granada
  \\18071 Granada, Spain
  \\\texttt{\href{mailto:canizo@ugr.es}{canizo@ugr.es}}
  \and
  \textsc{José A.~Carrillo}\\Department of Mathematics\\Imperial College
  London\\London SW7 2AZ, United Kingdom
  \\
  \texttt{\href{mailto:carrillo@imperial.ac.uk}{carrillo@imperial.ac.uk}}
  \and
  \textsc{Manuel Pájaro}
  \\
  IIM-CSIC
  \\Spanish Council for Scientific Research
  \\Eduardo Cabello 6
  \\36208 Vigo, Spain
  \\
  \texttt{\href{mailto:mpajaro@iim.csic.es}{mpajaro@iim.csic.es}}
}
\begin{document}

\title{Exponential equilibration of genetic circuits using entropy methods}


\maketitle
		
\begin{abstract}  
  We analyse a continuum model for genetic circuits based on a partial
  integro-differential equation initially proposed in Friedman, Cai \&
  Xie (2006) \cite{Friedman} as an approximation of a chemical master
  equation. We use entropy methods to show exponentially fast
  convergence to equilibrium for this model with explicit bounds. The
  asymptotic equilibration for the multidimensional case of more than
  one gene is also obtained under suitable assumptions on the
  equilibrium stationary states. The asymptotic equilibration property
  for networks involving one and more than one gene is investigated
  via numerical simulations.
\end{abstract}

                
\section{Introduction}

Translation of the information encoded in genes is responsible for all
cellular functions.  The decoding of DNA can be summarised, following
the central dogma of molecular biology, in two steps: the
transcription into messenger RNA and the translation into
proteins. Cells produce responses to environmental signals, thanks to
the regulation of DNA expression via certain feedback mechanism
activating or inhibiting the genes. Typically, regulation is produced
by the union of proteins to the DNA binding sites. Moreover, the
number of species involved in gene regulatory networks (gene
expression together with their regulation) is small, which makes its
behaviour inherently stochastic
\cite{elowitzetal02,Gillespie,koernetal05,McadamsArkin97,paulsson04}. This
underlying stochastic behaviour in gene regulatory networks is
captured by using the chemical master equation (CME) \cite{kepler,
  mackeyetal11, paulsson, Sherman}. However, the CME solution is
unavailable in most cases, due to the large (even infinite) number of
coupled equations.

There are two main ways to obtain the CME solution: via stochastic
simulation or via approximations of the CME. One of the most extended
methods to reproduce the CME dynamics using stochastic realisations is
the Stochastic Simulation Algorithm (SSA)
\cite{gillespie_76,Gillespie}. This method has no restrictions in its
applicability, even though it is computationally expensive. On the
other hand, CME approximations which remain valid under certain
conditions include the finite state projection \cite{munsky}, moment
methods \cite{engblom06,hasenaueretal14}, linear noise approximations
\cite{thomasetal14,vankampen,wallaceetal12} or hybrid models
\cite{jahnke11}.

In addition to the above mentioned methods, assuming that protein
production takes place in bursts one can obtain a partial
integro-differential equation (PIDE) as a continuous approximation of
the CME. This PIDE has a mathematical structure very similar to
kinetic and transport equations in mathematical biology
\cite{bookperthame} and it admits an analytical solution for its
steady state in the case of networks involving only one gene. In the
next subsections, we describe both the one dimensional PIDE model
\cite{Friedman} for self-regulated gene networks and the generalised
PIDE model \cite{pajaroetal17} for arbitrary genetic circuits. We will
discuss the main properties of the stationary states in one dimension
to finally explain the main results of this work.

\subsection{1-dimensional PIDE model}

The kinetic equation, first proposed by Friedman et al.~\cite{Friedman}, is a continuous approximation of the CME for gene self-regulatory networks. A
schematic representation of this genetic circuit is illustrated in
Figure \ref{fig:reactionscheme}, where the transcription-translation
mechanism from DNA to a protein $X$ is shown. Note that DNA
transcribes into messenger RNA not only from the active state at rate
(per unit time $\tau$) $k_m$, but also from the inactive state with rate
constant $k_{\varepsilon}$ lower than $k_m$, which is known as
\emph{basal transcription level} or \emph{transcriptional leakage}
\cite{Friedman,Ochab-MarcinekTabaka,pajaro15}. The messenger RNA
transcribes into protein $X$ following a first-order process with rate
constant (per unit time) $k_x$. The messenger RNA and protein are
degraded at rate constants $\gamma_m$ and $\gamma_x$ respectively.
\begin{figure}[H]
  \begin{displaymath}
    \xymatrix{&&&&\\
      \text{DNA\textsubscript{off}}
      \ar@{..>}@/^2pc/[rrr]^{\text{$k_{\epsilon}^{}$}}
      & \underset{k_{\mathrm{off}}^{}}{\overset{k_{\mathrm{on}}^{}}{
          \rightleftharpoons}}
      &  \text{DNA\textsubscript{on}}  \ar@{->}[r]^{\text{$k_m$}}
      & \text{mRNA} \ar@{->}[d]^{\text{$\gamma_m^{}$}} \ar@{->}[r]^{\text{$k_x^{}$}} & X \ar@{->}[d]^{\text{$\gamma_x^{}$}} \\
      & X \ar@{-->}[u] & & \text{$\emptyset$} & \text{$\emptyset$}
   }
  \end{displaymath}  
  \caption{Schematic representation of the transcription-translation
    mechanism under study. The promoters associated with the gene of
    interest are assumed to switch between active
    (DNA\textsubscript{on}) and inactive (DNA\textsubscript{off}) states,
    with rate constants $k_{\mathrm{on}}$ and $k_{\mathrm{off}}$
    per unit time, respectively. In this study, the transition is
    assumed to be controlled by a feedback mechanism induced by the
    binding/unbinding of a given number of $X$-protein molecules, what
    makes the network self-regulated. Transcription of messenger RNA
    (mRNA) from the active DNA form, and translation into protein
    $X$ are assumed to occur at rates (per unit time) $k_m$ and
    $k_x$, respectively. $k_{\varepsilon}$ is the rate constant
    associated with transcriptional leakage. The mRNA and protein
    degradations are assumed to occur by first order processes with
    rate constants ${\gamma}_m$ and ${\gamma}_x$, respectively.}
	\label{fig:reactionscheme}
\end{figure}

For self-regulated gene networks, activation or inhibition of the DNA
promoter is produced by the union of the protein expressed to the DNA
binding sites (feedback mechanism). So that, under protein action the
promoter can switch between its inactive (DNA\textsubscript{off}) and
active (DNA\textsubscript{on}) forms, with rate constants
$k_{\mathrm{on}}$ and $k_{\text{off}}$ respectively (see Figure
\ref{fig:reactionscheme}). There are two types of feedback mechanism:
positive or negative, corresponding to whether the protein inhibits
or promotes their production, respectively. The fraction of the
promoter in the active or inactive state is typically described by
Hill functions \cite{alon}. We can express the probability that the
promoter is in its inactive state as a function of the protein amount
$x$, denoted by $\rho:{\mathbb{R}}_{+} \rightarrow [0, \ 1]$ (see
\cite{Ochab-MarcinekTabaka,pajaro15}):
\begin{equation}\label{eq:rho}
\rho(x)=\dfrac{x^H}{x^H+K^H},
\end{equation}
where $K := \frac{k_{\text{off}}}{k_{\text{on}}}$ is the equilibrium
binding constant and $H\in \mathbb{Z}\backslash\{0\}$ is the Hill
coefficient which is positive if $H$ proteins bound to the DNA
inhibiting their production (negative feedback) and negative if $|H|$
proteins bound to the DNA activating their production (positive
feedback). Then, the rate $R_T$ of messenger RNA production
(transcription) can be written as function of the Hill expression
(\ref{eq:rho}), $R_T=k_m c(x)$, with the input function
$c(x):= \left(1 - \rho(x)\right) + \rho(x)\varepsilon$, where
$\varepsilon$ is the leakage constant defined as
$\varepsilon:=\frac{k_{\varepsilon}}{k_m}$. Note that the function
$R_T$ accounts for the messenger RNA production both from the DNA
active state (with probability $1 - \rho(x)$) with rate constant $k_m$
and from the inactive DNA (with probability $\rho(x)$) with lower rate
constant $k_{\varepsilon}$.

The PIDE model is valid under the assumption of protein production in
bursts. So, we consider gene self-regulatory networks where the
degradation rate of $mRNA$ is much faster than the corresponding to
protein, ${\gamma}_m / {\gamma}_x \gg 1$. Such condition is
verified in many gene regulatory networks, both in prokaryotic and
eukaryotic organisms \cite{ShahrezaeiSwain08,Daretal12}, and results
in protein being produced in bursts. As suggested in
\cite{Friedman,Elgartetal}, the burst size (denoted by
$b=\frac{k_x}{\gamma_m}$) is typically modelled by an exponential
distribution. The conditional probability for protein level to jump
from a state $y$ to a state $x > y$ after a burst is proportional to:
\begin{equation}
  \label{eq:bursts}
  \omega(x-y)=\dfrac{1}{b}\exp\left(-\dfrac{x-y}{b}\right),
  \qquad \text{for $x > y > 0$}.
\end{equation}
The temporal evolution of the probability density function of the
amount of proteins,
$p:{\mathbb{R}}_{+}\times{\mathbb{R}}_{+} \rightarrow
{\mathbb{R}}_{+}$ is described by the following PIDE model:
\begin{equation}\label{eq:Fried_good}
  \dfrac{\partial p}{\partial t}(t, x)
  - \dfrac{\partial (xp)}{\partial x}(t, x)
  = a \int_0^x \! \omega(x-y)c(y)p(t,y) \, \mathrm{d}y - ac(x)p(t,x), 
\end{equation}
where $\tau$ is time, $t = {\gamma}_x \tau$ represents a dimensionless
time associated to the time scale of protein degradation,
$a=\frac{k_m}{\gamma_x}$ is the dimensionless rate constant related to
transcription, which represents the mean number of bursts (burst
frequency) and $\omega(x-y)$ is given by (\ref{eq:bursts}). The input
function $c:{\mathbb{R}}_{+} \rightarrow [\varepsilon, \ 1] $, which
represents the feedback mechanism, takes the form
\cite{Ochab-MarcinekTabaka,pajaro15}:
\begin{equation}
  \label{eq:c-def}
  c(x)=\frac{K^H+ \varepsilon x^H}{K^H + x^H},
  \qquad x > 0.
\end{equation}
Note that the above input function can be constant, equal to one, when
the protein does not promote or repress its production (open
loop). This constant $c(x)=1$ is used when the DNA is always in its
active state, thus implying a unique messenger RNA production rate
($k_m$), reducing the system complexity.

We denote the \emph{stationary solution} of equation
(\ref{eq:Fried_good}) (which we sometimes call \emph{equilibrium}) as
$P_{\infty}(x)$, which therefore verifies the following equation:
\begin{equation}\label{eq:Fried_infy}
\dfrac{\partial [xP_{\infty}(x)]}{\partial x} = -a \int_0^x  \omega(x-y)c(y)P_{\infty}(y) \mathrm{d}y + ac(x)P_{\infty}(x).
\end{equation}
We say a stationary solution is normalised when its integral over
$[0,+\infty)$ (which we sometimes call its \emph{mass}) is equal to
$1$. This equation has a unique solution with mass $1$, which can be
written out explicitly as \cite{Ochab-MarcinekTabaka,pajaro15}:
\begin{equation}
  \label{eq:analytic_sol}
  P_{\infty}(x):=
  Z \left[ \rho(x) \right]^{\frac{a(1-\varepsilon)}{H}}
  x^{-(1-a\varepsilon)}e^{\frac{-x}{b}}
  =
  Z\left[ x^H +K^H \right]^{\frac{a(\varepsilon-1)}{H}}
  x^{a-1}e^{\frac{-x}{b}},
\end{equation}
with $\rho(x)$ defined in (\ref{eq:rho}) and $Z$ being a normalising
constant such that $\int_{0}^{\infty}P_{\infty}(x)\, \mathrm{d}x=1$.
In case of no self-regulation (open loop network with $c(x)=1$; that
is, $\epsilon = 1$) the stationary solution is a gamma distribution
\cite{Friedman}, which is in fact the limit of \eqref{eq:analytic_sol}
as $\epsilon$ tends to $1$:
\begin{equation}\label{invlapfri}
P_{\infty}(x):=\dfrac{x^{a-1}e^{-x/b}}{b^a \Gamma(a)},
\end{equation}
which is a limiting case of \eqref{eq:analytic_sol} when $\epsilon \to
1$.


\subsection{Generalised $n$-dimensional PIDE model}

Recently the 1D PIDE model has been extended to overcome more general
gene regulatory networks than the self-regulation considered by
Friedman \cite{Friedman}. As a first step in this extension, Bokes et
al. \cite{bokesetal2015} propose the use of variable protein
degradation rate, in order to accommodate gene networks with decoy
binding sites \cite{leeetal2012} to the PIDE model structure. Finally,
including the previous models and considering genetic networks
involving more than one gene Pájaro et al. \cite{pajaroetal17} proposed
the generalised PIDE model for any number of genes.

In \cite{pajaroetal17} a general gene regulatory network comprising
$n$ genes, $\boldsymbol{G}=\{DNA_1, \cdots, DNA_i, \cdots, DNA_n\}$,
is proposed. These genes encoded by DNA-subchains are transcribed into
$n$ different messenger RNAs $\boldsymbol{M}=\{mRNA_1,$
$ \cdots, mRNA_i, \cdots, mRNA_n\}$, which are translated into $n$
proteins types $\boldsymbol{X}=\{X_1, \cdots,$ $ X_i, \cdots,
X_n\}$.
We show a schematic representation of the general network in Figure
\ref{fig:reactionscheme_nD}, which is similar to the self-regulation
circuit. The main differences are that: (i) each DNA type can be
regulated by others different proteins than the one expressed by the
considered gene (cross regulation), and (ii) the protein degradation
rate can be a variable function of all proteins types considered.

The structure of this multidimensional network is equivalent to the previous self-regulation case. Each promoter can switch from the inactive states ($DNAi_{\mathrm{off}}$) to the active one ($DNAi_{\mathrm{on}}$) or vice versa with rate constants
$k_{\mathrm{on}}^{i}$ and $k_{\mathrm{off}}^{i}$ respectively. The
leakage (basal) messenger RNA production from the inactive promoter is
conserved at lower rate constant ($k_{\varepsilon}^i$) than its
production from the active state ($k_m^i$). Each $i$ messenger RNA
type is translated into the protein $X_i$ at rate constant
$k_x^i$. Both messengers RNA and proteins are degraded with rates
$\gamma_m^i$ and $\gamma_x^i(\mathbf{x})$ respectively.

Note that for this general network the total rate of production of $mRNA_i$, $R_T^i$, can be written as the rate constant production from the active $DNA_i$ state times one input function $c_i(\mathbf{x})$ describing all possible types of feedback mechanism. However, there are not universal expressions for $c_i(\mathbf{x})$, due to their dependence on the regulatory mechanism considered (the messenger RNA production can occur from intermediate DNA states between the total activated and the total repressed ones), some examples have been described in \cite{alon,pajaroetal17}. Without lost of generality, we can construct the input function verifying that its image is a positive interval, $c_i:{\mathbb{R}}_{+}^{n} \rightarrow [\varepsilon_i, \ 1]$, where the leakage constant $\varepsilon_i$ is defined as $k_{\varepsilon}^i/k_m^i$ with $k_{\varepsilon}^i$ being the $mRNAi$ rate constant from the total repressed $DNA_i$ (the lowest rate of $mRNA_i$ production).

\begin{figure}[H]
  \begin{displaymath}
    \xymatrix{&&&&\\
      DNAi_{\mathrm{off}}  \ar@{..>}@/^2pc/[rrr]^{\text{$k_{\epsilon}^{i}$}} & \underset{k_{\mathrm{off}}^{i}}{\overset{k_{\mathrm{on}}^{i}}{ \rightleftharpoons}} &  DNAi_{\mathrm{on}}  \ar@{->}[r]^{\text{$k_m^i$}} & mRNA_i \ar@{->}[d]^{\text{$\gamma_m^{i}$}} \ar@{->}[r]^{\text{$k_x^{i}$}} & X_i \ar@{->}[d]^{\text{$\gamma_x^{i}(\mathbf{x})$}} \\
      & X_J \ar@{-->}[u] & & \text{$\emptyset$} & \text{$\emptyset$}
    }
  \end{displaymath}
  \caption{Schematic representation of the transcription-translation
    mechanism under study. The promoters associated with the genes of
    interest are assumed to switch between active
    ($DNAi_{\mathrm{on}}$) and inactive ($DNAi_{\mathrm{off}}$)
    states, with rate constants $k_{\mathrm{on}}^{i}$ and
    $k_{\mathrm{off}}^{i}$ per unit time, respectively. The transition
    is assumed to be controlled by a feedback mechanism induced by the
    binding/unbinding of a given number of $X_j$-protein molecules
    with $j \in J$ (more than one protein type can bind to the DNA),
    which makes the network self-regulated if $i=j$ or cross-regulated
    if $j\ne i$. Transcription of messenger RNA ($mRNA_i$) from the
    active $DNAi$ form, and translation into protein $X_i$ are assumed
    to occur at rates (per unit time) $k_m^{i}$ and $k_x^{i}$,
    respectively. $k_{\varepsilon}^{i}$ is the rate constant
    associated with transcriptional leakage. The $mRNA_i$ degradation
    is assumed to occur by first order processes with rate constant
    ${\gamma}_m^{i}$. Degradation of the $X_i$-protein may follow
    different pathways, which is modelled by the function
    ${\gamma}_x^{i} (\mathbf{x})$, with
    $\gamma_x^{i}:{\mathbb{R}}_{+}^{n} \rightarrow
    {\mathbb{R}}_{+}$.} \label{fig:reactionscheme_nD}
\end{figure}

Considering the set of $n$ proteins $\mathbf{X}=\{X_1, \cdots, X_n\}$, we define the $n$-vector $\mathbf{x}=(x_1, \cdots, x_n) \in \mathbb{R}_+^n$ as the amount of each protein type. The generalised ($n$-dimensional) PIDE model, proposed in \cite{pajaroetal17}, describes the temporal evolution of the joint density distribution function of $n$ proteins $p:{\mathbb{R}}_{+}\times{\mathbb{R}}_{+}^{n} \rightarrow {\mathbb{R}}_{+}$:
\begin{equation}\label{eq:MEF_generalise}
\dfrac{\partial p}{\partial t}(t,\mathbf{x}) =  \sum_{i=1}^{n}\left( \dfrac{\partial}{\partial x_i}\left[\gamma_x^i(\mathbf{x}) x_i p(\mathbf{x})\right] + k_m^i \int_0^{x_i} \! \omega_i(x_i-y_i) c_i(\mathbf{y}_i)p(t,\mathbf{y}_i) \, \mathrm{d}y_i -k_m^ic_i(\mathbf{x})p(\mathbf{x})\right)
\end{equation}
where $\mathbf{y}_i$ represents the vector state $\mathbf{x}$ with its
$i$-th position changed to $y_i$, (that is:
$(\mathbf{y}_i)_j=x_j \ \text{if} \ j\ne i $ and
$(\mathbf{y}_i)_j=y_i \ \text{if} \ j=i $), and
$\gamma_x^i(\mathbf{x})$ is the degradation rate function of each
protein. The first term in the right-hand side of the equation
accounts for protein degradation whereas the integral describes
protein production by bursts. The burst size is assumed to follow an
exponential distribution, what leads to the conditional probability
for protein jumping from a state $y_i$ to a state $x_i$ after a burst
be given by:
\begin{equation*}
  \omega_i(x_i - y_i) = \dfrac{1}{b_i}
  \exp \left( -\dfrac{x_i-y_i}{b_i} \right)
\end{equation*}
where $b_i=\frac{k_x^{i}}{\gamma_m^{i}}$ are dimensionless frequencies
associated to translation which corresponds with the mean protein
produced per burst (burst size). The function $c_i(\mathbf{x})$
($c_i:{\mathbb{R}}_{+}^{n} \rightarrow [\varepsilon_i, \ 1]$) is an
input function, which models the regulation mechanism of the network
considered.

The stationary solution $P_{\infty}(\mathbf{x})$ of
(\ref{eq:MEF_generalise}) satisfies:
\begin{equation}\label{eq:nFried_infy}
\sum_{i=1}^{n}\left( \dfrac{\partial}{\partial x_i}\left[\gamma_x^i(\mathbf{x}) x_i P_{\infty}(\mathbf{x})\right] + k_m^i \int_0^{x_i} \! \omega_i(x_i-y_i) c_i(\mathbf{y}_i)P_{\infty}(\mathbf{y}_i) \, \mathrm{d}y_i -k_m^ic_i(\mathbf{x})P_{\infty}(\mathbf{x})\right)=0. 
\end{equation}
Note that an analytical expression for the steady state solution is
not known for the general case of the PIDE model
(\ref{eq:MEF_generalise}). Some properties of the 1D solution remain
valid for the nD steady state since $P_{\infty}(\mathbf{x})$ is a
probability density function, then
$\int_{\mathbb{R}_+^n}P_{\infty}(\mathbf{x})\, \mathrm{d} \mathbf{x} =
1$.
However, we do not have any other prior information about the
properties of stationary solutions.


\subsection{Main results}
\label{sec:main_results}

In this work we will apply entropy methods in order to analyse the
asymptotic equilibration for the kinetic equations
\eqref{eq:Fried_good} and \eqref{eq:MEF_generalise}. These equations
bear a similar structure to the self-similar fragmentation and the
growth-fragmentation equations \cite{PR,LP,DG,Caceresetal11,BCG}, used
for instance in cell division modelling. In those cases, the transport
term makes the cluster size of particles grow while the integral
term breaks the particles into pieces of smaller size. In our present
models, the transport term degrades the number density of proteins
while the integral term makes the protein number density to grow.

In fact, the kinetic equations \eqref{eq:Fried_good} and
\eqref{eq:MEF_generalise} have the structure of linear population
models as in \cite{Micheletal04,Micheletal05,Carrilloetal11} for which
the so-called general relative entropy applies. This fact already
reported in \cite{pajaroetal16b} implies the existence of infinitely
many Lyapunov functionals for these models useful for different
purposes among which to analyse their asymptotic behavior. We will
make a summary of the main properties of equation
\eqref{eq:Fried_good} in Section 2 together with a quick treatment of
the well-posedness theory for these models. They are easily
generalisable to the multidimensional case \eqref{eq:MEF_generalise}.

In sections 3 and 4, we will improve over the direct application of
the general relative entropy method in \cite{pajaroetal16b}. On one
hand, we study in Section 3 the case of gene circuits involving one
gene, equation \eqref{eq:Fried_good}, a direct functional inequality
between the $L^2$-relative entropy and its production leading to
exponential convergence. In order to fix our setting, we recall that
$\omega$ is given by \eqref{eq:bursts} for some $b > 0$, and
$c = c(x)$ is given by \eqref{eq:c-def}, for some constants $K > 0$,
$H \in \mathbb{Z} \setminus \{0\}$ and $0 < \epsilon \leq 1$; and
$a > 0$ is a constant.

\begin{theorem}[Long-time behaviour for the 1-dimensional model]
  \label{thm:exp}
  Let $p_0$ be a probability distribution such that $p_0 \in L^1((0,+\infty)) \cap L^2((0,+\infty), P_\infty^{-1})$, and let $p$ be the mild solution to
  equation \eqref{eq:Fried_good} with initial data $p_0$ (see
  Definition \ref{def:mild}). There exists a constant $\lambda > 0$
  depending only on the parameters of the equation (and not on $p_0$)
  such that
  \begin{equation*}
    \| p(t,\cdot) - P_\infty \|_{L^2((0,+\infty), P_\infty^{-1})} \leq
    e^{-\lambda t} \| p_0 - P_\infty \|_{L^2((0,+\infty), P_\infty^{-1})}\,.
  \end{equation*}
\end{theorem}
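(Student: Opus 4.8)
The plan is to work in the weighted Hilbert space $L^2((0,+\infty), P_\infty^{-1})$ and exploit the general relative entropy structure mentioned above at the level of the quadratic entropy $\phi(u) = \tfrac12 u^2$. Writing $p(t,x) = P_\infty(x)\, h(t,x)$, one expects the equation for $h$ to take a convenient form: the transport term $-\partial_x(xp)$ becomes a pure transport in $h$ (no reaction), while the burst terms act on $h$ through the operator that already annihilates the constant function $h \equiv 1$ (since $P_\infty$ is stationary). So I would first derive the evolution equation for $h$ and verify that
\begin{equation*}
  \ddt \int_0^\infty P_\infty(x)\, \frac{(h(t,x)-1)^2}{2} \,\d x
  = - D[h(t,\cdot)],
\end{equation*}
where $D$ is the (nonnegative) entropy dissipation functional obtained by testing the burst operator against $P_\infty (h-1)$; the transport term should contribute a boundary term plus a term of definite sign coming from the divergence $\partial_x x = 1$ together with the decay of $P_\infty$, which is the source of the spectral gap.

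The heart of the argument is then a \emph{functional inequality} of Poincaré type: there exists $\lambda > 0$, depending only on $a, b, \varepsilon, K, H$, such that
\begin{equation*}
  D[h] \;\ge\; 2\lambda \int_0^\infty P_\infty(x)\, \frac{(h(x)-1)^2}{2}\,\d x
  \qquad\text{for all admissible } h.
\end{equation*}
Once this is established, Grönwall's lemma applied to the entropy $\mathcal H(t) = \tfrac12 \|p(t,\cdot)-P_\infty\|^2_{L^2(P_\infty^{-1})}$ gives $\mathcal H(t) \le e^{-2\lambda t}\mathcal H(0)$, which is exactly the claimed bound after taking square roots. To prove the inequality I would split the dissipation into its transport part and its integral part. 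The transport part contributes something like $\int_0^\infty x\, P_\infty(x)\, \partial_x\!\big(\tfrac{(h-1)^2}{2}\big)\,\d x$; integrating by parts and using $-\partial_x(xP_\infty) = -(x/b)P_\infty + \text{(burst terms)} \cdot(\cdots)$ from \eqref{eq:Fried_infy}, one extracts a genuinely coercive piece weighted by $1/b$ (reflecting the exponential tail $e^{-x/b}$ of $P_\infty$) plus cross terms that must be controlled. The integral (burst) part is nonnegative by convexity of $\phi$; the standard general-relative-entropy computation shows it equals a double integral of the form $\iint (\text{kernel})\, (h(x)-h(y))^2$ against a positive measure, so it only helps.

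The main obstacle I anticipate is showing that the coercive constant $\lambda$ is strictly positive and explicit, i.e.\ closing the functional inequality uniformly. Two difficulties enter: (i) the weight $P_\infty$ behaves like $x^{a-1}$ near $x=0$, so when $a<1$ it is not integrable against $x^{-1}$ and the boundary term at $0$ in the integration by parts needs care — one must check it vanishes for mild solutions, likely using the $L^1 \cap L^2(P_\infty^{-1})$ regularity and the structure of the flow $\dot x = -x$ which pushes mass toward the origin; and (ii) the burst operator is nonlocal and only provides control on differences $h(x)-h(y)$ for $x>y$, so one needs to combine it with the transport coercivity to dominate the full $L^2(P_\infty^{-1})$ norm of $h-1$. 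The cleanest route is probably to avoid proving a sharp Poincaré inequality directly and instead bound $D[h]$ below by just the transport contribution $\tfrac1b \int_0^\infty x\,P_\infty(x)(h-1)^2 \d x$ up to using the burst term to absorb errors, then note that on the relevant scale $x \gtrsim \text{const}$ this already dominates a fixed multiple of the full entropy while the region $x$ small is handled by the nonlocal term transporting information from larger $x$; making this dichotomy quantitative, with constants depending only on the equation parameters, is where the real work lies.
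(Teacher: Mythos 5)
Your overall frame (work in $L^2(P_\infty^{-1})$, use the quadratic general relative entropy, prove a Poincar\'e-type inequality between entropy and dissipation, conclude by Gr\"onwall) is the same as the paper's, but two things go wrong. First, your description of the dissipation is structurally incorrect: if you carry out the computation using the stationarity relation \eqref{eq:Fried_infy}, the transport term does \emph{not} survive as a separately coercive piece of the form $\tfrac1b\int x P_\infty (h-1)^2$; it cancels exactly against parts of the burst terms, and the entire entropy production is the nonlocal jump form $\mathcal{D}_2(u)=a\iint_{x>y}\omega(x-y)(u(x)-u(y))^2c(y)P_\infty(y)\,\mathrm{d}x\,\mathrm{d}y$ as in \eqref{eq:Dt_entropy_eqn} (this is Proposition \ref{prop:Hconvex_entropy1D} with $H(u)=(u-1)^2$). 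Consequently your proposed ``cleanest route'' --- bounding the dissipation below by the transport contribution and using the burst term only to absorb errors --- cannot be executed: there is no transport contribution to bound below, and all the coercivity must be extracted from the nonlocal term alone.

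Second, and more importantly, the heart of the theorem is precisely the functional inequality $\mathcal{G}_2(u)\le \tfrac1{2\beta}\mathcal{D}_2(u)$, and your proposal does not prove it; you explicitly defer ``where the real work lies.'' The paper's proof of this inequality is nontrivial: it first rewrites $\mathcal{G}_2$ as the symmetric double integral $\mathcal{H}_2$ (Lemma \ref{l:D2eqH2}), then discretizes $(\delta,\infty)$ into intervals of length $1/2$ on which $P_\infty$ is comparable to its value at the left endpoint (Lemma \ref{lema:Pinf_bound}, using the explicit formula \eqref{eq:analytic_sol}), controls long-range differences $A_{i,j}$ by nearest-neighbour differences via a telescoping/weighted Cauchy--Schwarz chaining argument whose weights are summable thanks to the exponential tail of $P_\infty$ (Lemma \ref{lemma:ine_aux1}, via Stolz--Ces\`aro), treats the region near the origin separately with an intermediate point $z\in(\delta,1)$ to avoid the possible singularity of $P_\infty$ at $0$ (Step 2 of Proposition \ref{prop:exp_step1}), and finally compares the resulting intermediate functional $D(u)$ with $\mathcal{D}_2(u)$ using only that $\omega$ and $c$ are bounded above and below on jumps of length at most one (Proposition \ref{prop:exp_step2}). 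None of these ingredients, nor any substitute for them, appears in your proposal, so as it stands the argument has a genuine gap exactly at the step that makes the theorem true with a rate depending only on the parameters.
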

The value of $\lambda$ can be estimated explicitly from the arguments
in the proof, though we do not consider the specific value to be a
good approximation of the optimal decay rate. The behaviour of the
stationary solutions $P_\infty(x)$ near the origin and infinity is
crucial for direct functional inequalities involving the relative
entropy and its production in the one dimensional case.

Section 4 is devoted to the analysis of the multidimensional equation
\eqref{eq:MEF_generalise} corresponding to multiple genes involved in
the gene transcription. In this case, solutions to the stationary
problem \eqref{eq:nFried_infy} are not explicit and hence we are not
able to control precisely the behaviour of the stationary solutions
near the origin and infinity as before. For this reason, we are only
able to show convergence towards a unique equilibrium solution
assuming its existence with suitable behavior near the origin and
infinity:

\begin{theorem}[Long-time behaviour for the $nD$ model]
  Given any mild solution $p$ with normalised nonnegative initial data
  $p_0 \in L^1(\R_+)$ to equation \eqref{eq:MEF_generalise} and given
  a normalised stationary solution $P_\infty(\mathbf{x})$ to
  \eqref{eq:MEF_generalise} satisfying the technical assumption
  \ref{ass:ge} from Section \ref{sec:nd}, it holds that
  \begin{equation*}
    \lim\limits_{t \rightarrow \infty}
    \int_{{\mathbb{R}}_{+}^{n}}|p(t,\mathbf{x})-P_\infty(\mathbf{x})|^2
    \mathrm{d} \mathbf{x} = 0.
  \end{equation*}
  As a consequence, if a normalised stationary solution
  $P_\infty(\mathbf{x})$ of \eqref{eq:MEF_generalise} and satisfying
  assumption \ref{ass:ge} exists, it is unique.
\end{theorem}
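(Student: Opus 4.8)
We outline a proof combining the general relative entropy (GRE) structure of \eqref{eq:MEF_generalise}, already noted in \cite{pajaroetal16b}, with a compactness (LaSalle-type) argument, since in the multidimensional case neither $P_\infty$ nor a quantitative spectral gap is available. \textbf{Step 1 (GRE Lyapunov functionals).} Equation \eqref{eq:MEF_generalise} is linear and mass conserving — integrating it over $\R_+^n$ and using $\int_0^\infty\omega_i=1$ gives $\ddt\int_{\R_+^n} p\,\d\mathbf{x}=0$ — so its dual stationary state is $\phi\equiv1$. Following the GRE framework of \cite{Micheletal04,Micheletal05}, for any convex $H\colon\R\to\R$ and any normalised $P_\infty>0$ (positivity being part of assumption \ref{ass:ge}), writing $h=p/P_\infty$ one gets
\begin{align*}
  \ddt\int_{\R_+^n}P_\infty\,H(h)\,\d\mathbf{x} &= -D_H[p]\ \le\ 0,\\
  D_H[p] &= \sum_{i=1}^{n}k_m^i\iint_{0<y_i<x_i}\omega_i(x_i-y_i)\,c_i(\mathbf{y}_i)\,P_\infty(\mathbf{y}_i)\,\mathcal{B}_H\!\big(h(\mathbf{y}_i),h(\mathbf{x})\big)\ \ge\ 0,
\end{align*}
where the transport terms contribute nothing to $D_H$ (as usual for first-order transport once the steady-state identity for $P_\infty$ is used) and $\mathcal{B}_H(r,s):=H(r)-H(s)-H'(s)(r-s)\ge0$ is the Bregman divergence of $H$. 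One first proves this for smooth, rapidly decaying data and then passes to the limit. Taking $H(u)=|u-1|$ shows $\|p(t)-P_\infty\|_{L^1}$ is non-increasing and the flow is an $L^1$-contraction; taking $H(u)=(u-1)^2$ shows $\mathcal{H}(t):=\int_{\R_+^n}|p(t)-P_\infty|^2P_\infty^{-1}\,\d\mathbf{x}$ is non-increasing whenever finite.

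\textbf{Step 2 (compactness of the orbit).} Using assumption \ref{ass:ge}, which fixes the behaviour of $P_\infty$ near the hyperplanes $\{x_i=0\}$ and at infinity, one shows that $\{p(t,\cdot):t\ge1\}$ is relatively compact in $L^1(\R_+^n)$. Tightness at infinity follows from a uniform first-moment bound: testing \eqref{eq:MEF_generalise} against $x_j$, using $\int_{y_j}^{\infty}x_j\omega_j(x_j-y_j)\,\d x_j=y_j+b_j$ and a lower bound $\gamma_x^j\ge\delta>0$ gives $\ddt\int x_j p\le-\delta\int x_j p+k_m^j b_j$, hence $\sup_{t\ge0}\int_{\R_+^n}|\mathbf{x}|\,p(t)\,\d\mathbf{x}<\infty$. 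Excluding concentration of mass on the faces $\{x_i=0\}$ — against which the origin-contracting transport pushes — is the delicate point and is where the prescribed behaviour of $P_\infty$ near the origin enters, via a suitably weighted GRE-monotone estimate. Given such bounds, from any sequence $t_n\to\infty$ one extracts a subsequence along which $p(t_n+s,\cdot)$ converges, locally uniformly in $s$ and in $L^1_{\mathbf{x}}$, to a mass-one solution $g(s,\mathbf{x})$ of \eqref{eq:MEF_generalise}.

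\textbf{Step 3 (rigidity, conclusion, uniqueness).} Take $H(u)=(u-1)^2$ and start with data in the $L^1$-dense class $L^1\cap L^2(P_\infty^{-1})$. Since $\mathcal{H}$ is non-increasing and bounded below it has a limit; passing to the limit along the subsequence gives $\int P_\infty H(g(s)/P_\infty)$ constant in $s$, hence $D_H[g(s)]\equiv0$. Strict convexity of $H$ forces $g(s,\mathbf{y}_i)/P_\infty(\mathbf{y}_i)=g(s,\mathbf{x})/P_\infty(\mathbf{x})$ for a.e.\ $0<y_i<x_i$ and every $i$ (here $c_i\ge\varepsilon_i>0$ and $P_\infty>0$ are used); hence $g(s,\cdot)/P_\infty$ is independent of each $x_i$, i.e.\ constant, i.e.\ $\equiv1$ by normalisation, so $g(s,\cdot)=P_\infty$. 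The subsequential limit being always $P_\infty$, we get $p(t,\cdot)\to P_\infty$ in $L^1$ and $\mathcal{H}(t)\to0$, whence, using $P_\infty\in L^\infty$ from assumption \ref{ass:ge},
\[
  \int_{\R_+^n}|p(t,\mathbf{x})-P_\infty(\mathbf{x})|^2\,\d\mathbf{x}\ \le\ \|P_\infty\|_{L^\infty}\,\mathcal{H}(t)\ \longrightarrow\ 0\qquad(t\to\infty).
\]
For general $p_0\in L^1$, the $L^1$ convergence follows by density and the $L^1$-contraction, and the $L^2$ bound follows once one knows — by a routine smoothing estimate for \eqref{eq:MEF_generalise}, using the contraction of the transport towards the origin and the boundedness of the burst operator — that $p(t,\cdot)\in L^2(P_\infty^{-1})$ for $t\ge1$ with norm controlled by $\|p_0\|_{L^1}$, so the display applies from $t=1$ on. Uniqueness is then immediate: applying the convergence statement to the time-independent solution $p(t)\equiv P_\infty'$ of any other normalised stationary $P_\infty'$ satisfying assumption \ref{ass:ge} forces $P_\infty'=P_\infty$.

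\textbf{Main obstacle.} The crux is Step 2: with no explicit formula for $P_\infty$ one cannot read off its decay and growth, so the tightness of the orbit — in particular ruling out mass escaping to the hyperplanes $\{x_i=0\}$ under the contracting transport — must be squeezed out of the qualitative hypotheses in assumption \ref{ass:ge}; this is exactly why the theorem is conditional on that assumption and gives no explicit rate.
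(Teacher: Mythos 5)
Your Step 1 reproduces the paper's entropy dissipation identity (Proposition \ref{prop:Hconvex_entropynD}) and the $L^1$-contraction, but the heart of your argument (Steps 2--3, a LaSalle-type compactness/rigidity scheme) is not the paper's route and, as written, has genuine gaps. First, the strong relative compactness of the orbit in $L^1(\R_+^n)$ is asserted, not proved: the semigroup of \eqref{eq:MEF_generalise} is a contraction transport plus a bounded jump operator, with no smoothing mechanism, so precompactness is not automatic; your tightness bound needs a uniform lower bound $\gamma_x^j \ge \delta > 0$ which is not among the hypotheses; and the exclusion of mass concentrating on the faces $\{x_i=0\}$ --- which you yourself identify as the crux --- is attributed to Assumption \ref{ass:ge}, but that assumption only says that the boundary terms $\int_0^\infty \partial_{x_i}\left[H(u)\gamma_x^i(\mathbf{x}) x_i P_\infty(\mathbf{x})\right]\mathrm{d}x_i$ vanish; it does not prescribe the behaviour of $P_\infty$ near the axes or at infinity, nor does it assert $P_\infty>0$ or $P_\infty\in L^\infty$, both of which you invoke. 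Second, the reduction of general $L^1$ data to the class $L^2(P_\infty^{-1})$ via a ``routine smoothing estimate'' is unfounded: for this equation an initial singularity is merely transported and exponentially damped, so $p(t,\cdot)$ need not lie in $L^2(P_\infty^{-1})$ at any finite time if $p_0$ is only integrable.

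The paper avoids compactness entirely. For initial data with $p_0 \le C_1 P_\infty$ the maximum principle propagates $p(t)\le C_1 P_\infty$, and the key step is Lemma \ref{lem:D_almost_H}: splitting the double integral defining $\mathcal{G}_2^n$ into the box $\Omega_\delta=[\delta,1/\delta]^n\times[\delta,1/\delta]^n$ and its complement, the complement is bounded by $2C_1^2$ times its $P_\infty\otimes P_\infty$ measure (hence $\le\epsilon$ for $\delta$ small), while on the box a coordinate-by-coordinate Cauchy--Schwarz argument bounds the integrand by the dissipation, yielding $\mathcal{G}_2^n(u)\le K_\epsilon\,\mathcal{D}_2^n(u)+\epsilon$. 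Combined with $\int_0^\infty \mathcal{D}_2^n(u)(t)\,\mathrm{d}t<\infty$ (so $\mathcal{D}_2^n(u)(t_s)\to 0$ along a sequence) and the monotonicity of $\mathcal{G}_2^n$, this gives $\mathcal{G}_2^n(u)(t)\to0$ directly, with no extraction of limit trajectories and no rigidity step; general $L^1$ data are then treated by density and the $L^1$-contraction. If you wish to salvage your LaSalle scheme you must supply the missing orbit compactness and boundary control, which under the stated hypotheses is essentially as hard as the functional inequality the paper proves.
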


The proof is based on a weaker variant of our one-dimensional
inequality, in which the control between the relative entropy and its
production is obtained except for an error term which happens to be
small under the assumptions of the behavior of the stationary solution
$P_{\infty}(\mathbf{x})$. Both results of equilibration are
illustrated with numerical simulations in their corresponding
sections.


\section{Mathematical preliminaries and entropy methods}

\subsection{Properties of stationary solutions}
\label{sec:equilibrium_prop}

Let us start by discussing the basic properties of the one dimensional stationary states to \eqref{eq:Fried_good}.
The behaviour of the stationary state at zero and at $+\infty$ depends
on both $r=a\varepsilon-1$ and $a$ due to the presence of the function
$\rho(x)$ and its dependence on $H$. It is as follows:
\begin{itemize}
\item[1.] If $H>0$, then $P_{\infty}(x) \simeq x^{a-1}$ as $x\to 0^+$
  and $P_{\infty}(x) \simeq x^r e^{-x/b}$ as $x\to+\infty$. Then the
  stationary state $P_\infty(x)$ exhibits a singularity at zero for
  $0<a<1$ and it is smooth otherwise having zero limit for $a> 1$ and
  a positive limit for $a=1$.
\item[2.] If $H< 0$, then $P_{\infty}(x) \simeq x^r$ as $x\to 0^+$ and
  $P_{\infty}(x) \simeq x^{a-1} e^{-x/b}$ as $x\to+\infty$. Then the
  stationary state $P_\infty(x)$ exhibits a singularity at zero for
  $a\varepsilon<1$ and it is smooth otherwise having zero limit for
  $a\varepsilon>1$ and a positive limit for $a\varepsilon=1$.
\end{itemize}
As a particular case, if $c(x) \equiv 1$ then $P_{\infty}(x)$ is given
by (\ref{invlapfri}) and we have $P_{\infty}(x) \simeq x^{a-1}$ as
$x\to 0^+$ and $P_{\infty}(x) \simeq x^{a-1} e^{-x/b}$ as
$x\to+\infty$. Then the stationary state $P_\infty(x)$ exhibits a
singularity at zero for $a<1$ and it is smooth otherwise having zero
limit for $a>1$ and a positive limit for $a=1$.

\begin{figure}[H]
	\centering
	\includegraphics[width=0.8\textwidth]{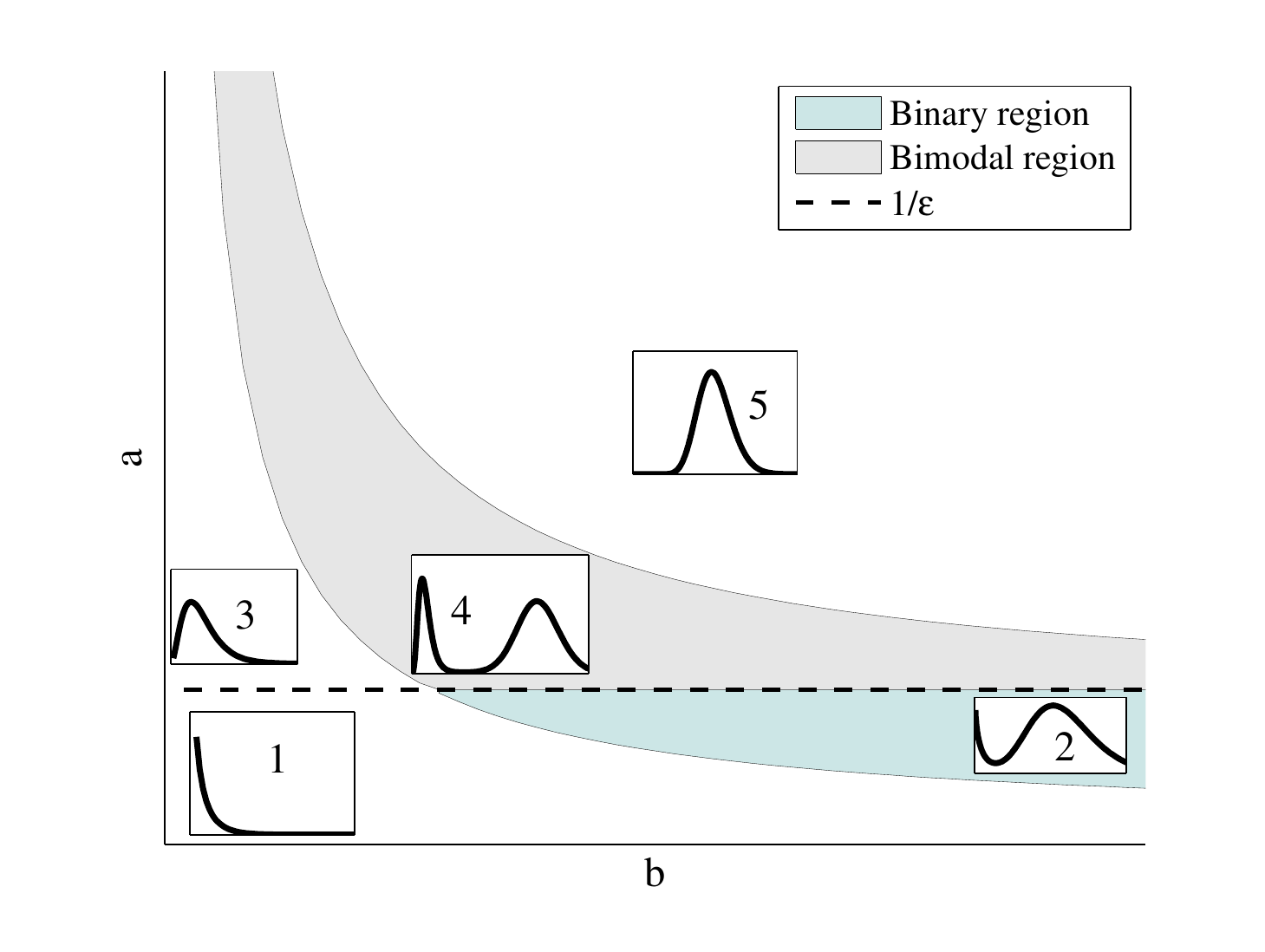}
	\caption{Regions in the parameter space, where protein distribution exhibits different behaviours for $H<0$. There are two large areas where the protein distribution change fundamentally its properties, the first including the shapes one and two, where $a<\frac{1}{\varepsilon}$ and $\lim\limits_{x \rightarrow 0}P_{\infty}(x)=+\infty$ and the second with $P_{\infty}(x)$ finite for all non-negative $x$, which includes shapes three to five. }
	\label{fig:distribution_shapes}
\end{figure}

Note that in all cases
$\lim\limits_{x \rightarrow \infty} P_{\infty}(x) = 0$. As we can see
in Fig \ref{fig:distribution_shapes}, the stationary solution has five
different qualitative behaviours for $H<0$ (see also \cite{pajaro15}):
\begin{itemize}
\item[1.] If $a<\dfrac{1}{\varepsilon}$, then
  $\lim\limits_{x \rightarrow 0} P_{\infty}(x) = \infty$.  \subitem1.1
  Only one peak in $x=0$ (Case 1 Fig
  \ref{fig:distribution_shapes}). 
  \subitem1.2 Two peaks one in $x=0$ and another in $x>0$ (Case 2 Fig
  \ref{fig:distribution_shapes}). 
\item[2.] If $a > \dfrac{1}{\varepsilon}$, then
  $\lim\limits_{x \rightarrow 0} P_{\infty}(x) = 0$. If
  $a \ge \dfrac{1}{\varepsilon}$, then
  $\lim\limits_{x \rightarrow 0} P_{\infty}(x) = M$ with $M\ge 0$
  \subitem2.1 Only one peak in $x > 0$ but close to $x=0$ (Case 3 Fig
  \ref{fig:distribution_shapes}). 
  \subitem2.2 Two different peaks at two points $x_1, x_2 > 0$ (Case 4
  Fig \ref{fig:distribution_shapes}). 
  \subitem2.3 Only one peak in $x \ge 0$ (Case 5 Fig
  \ref{fig:distribution_shapes}). 
\end{itemize}
Note that, case 2.1 and 2.3 are equivalent, and
$\lim\limits_{x \rightarrow \infty} P_{\infty}(x) = 0$ for all
cases. If $H>0$ (or $c(x)=1$) the bimodal behaviour disappears, and
only cases 3 or 5 remain for $a>1$ and case 1 if $a<1$.

\subsection{Well-posedness}

The 1D equation \eqref{eq:Fried_good} is a linear integro-differential
equation for which well-posedness and some basic properties follow
from standard methods. A \emph{classical solution} to equation
\eqref{eq:Fried_good} with initial data
$p_0 \in \mathcal{C}^1([0,+\infty))$ is a function
$p \in \mathcal{C}^1([0,+\infty) \times (0,+\infty))$ which satisfies
\eqref{eq:Fried_good} for all
$(t,x) \in [0,+\infty) \times (0,+\infty)$, and such that
$p(0,x) = p_0(x)$ for all $x \in (0,+\infty)$. It is not hard to show
that, given an integrable initial condition
$p_0 \in \mathcal{C}^{1,\mathrm{b}}([0,+\infty))$, there exists a
unique mass-conserving classical solution. In order to give a brief
sketch of the proof it is perhaps easier to work with \emph{mild
  solutions}, which we will introduce now. Given
$p(t)=p(t,\cdot) \in L^1(0,+\infty)$, we denote by $L[p(t)]$ the
right-hand side of \eqref{eq:Fried_good} given by
\begin{equation*}
  L[p(t)](x) := a \int_0^x \! \omega(x-y)c(y)p(t,y) \, \mathrm{d}y -
  ac(x)p(t,x),
  \qquad x > 0,
\end{equation*}
and given any function $p_0 \: [0,+\infty) \times (0,+\infty) \to \R$
we define
\begin{equation*}
  (X_t \# p_0) (x) := p_0(x e^t) e^t,
  \qquad \text{for $t \geq 0, x > 0$.}
\end{equation*}
This notation is motivated by the fact that $X_t \# p_0$ is the
transport of the function $p_0$ by the dilation map $X_t(x) := x e^{-t}$. By
the method of characteristics one easily sees that a classical
solution $p$ to \eqref{eq:Fried_good} must satisfy
\begin{equation}
  \label{eq:mild}
  p(t,x)
  = (X_t \# p_0)(x)
  + \int_0^t \big (X_{t-s} \# L[p(s,\cdot)] \big)(x) \d s
  \qquad
  \text{for all $t \geq 0$, $x > 0$.}
\end{equation}
This suggests the following definition.
\begin{definition}
  \label{def:mild}
  Let $p_0 \in L^1(0,+\infty)$. We say that
  $p \in \mathcal{C}([0,\infty); L^1(0,+\infty))$ is a \emph{mild
    solution} to equation \eqref{eq:Fried_good} with initial data
  $p_0$ if it satisfies \eqref{eq:mild} for all $t \geq 0$, for almost
  all $x > 0$.
\end{definition}

\begin{theorem}
  \label{thm:exist_mild}
  For any $p_0 \in L^1(0,+\infty)$ there exists a unique mild solution
  of \eqref{eq:Fried_good} with initial data $p_0$ satisfying
  $$
  \int_0^\infty p(t,x)\, \mathrm{d}x = \int_0^\infty p_0(x)\, \mathrm{d}x 
  \qquad \text{for all $t \geq 0$.}
  $$ 
  In addition, there is a constant $C > 0$ (independent of $p_0$) such that
  \begin{equation}
    \label{eq:stability-L1}
    \|p(t) \|_1 \leq e^{Ct} \|p_0\|_1
    \qquad \text{for all $t \geq 0$.}
  \end{equation}
  Moreover, for any $p_0 \in \mathcal{C}^{1,\mathrm{b}}(0,+\infty)$
  there exists a unique classical solution of \eqref{eq:Fried_good}
  with initial data $p_0$.
\end{theorem}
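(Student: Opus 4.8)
The plan is to construct the unique mild solution by a Banach fixed point argument applied to the Duhamel identity \eqref{eq:mild}, to read off mass conservation and the bound \eqref{eq:stability-L1} by integrating that identity, and finally to upgrade regularity for smooth initial data.

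Two elementary facts drive the argument. First, $X_t\#$ is an isometry of $L^1(0,+\infty)$: the change of variables $u = xe^t$ gives $\int_0^\infty (X_t\# f)(x)\,\mathrm{d}x = \int_0^\infty f(u)\,\mathrm{d}u$ and, more generally, $\|X_t\# f\|_{L^1} = \|f\|_{L^1}$. Second, $L$ is a bounded operator on $L^1(0,+\infty)$ with $\|L[q]\|_{L^1} \le 2a\|q\|_{L^1}$: since $\omega \ge 0$ has unit mass and $0\le c\le 1$, Fubini's theorem yields
\begin{equation*}
  \left\| \int_0^\cdot \omega(\cdot - y)\,c(y)q(y)\,\mathrm{d}y \right\|_{L^1}
  \le \int_0^\infty |c(y)q(y)| \int_y^\infty \omega(x-y)\,\mathrm{d}x\,\mathrm{d}y
  = \|cq\|_{L^1} \le \|q\|_{L^1},
\end{equation*}
and the multiplication term $ac(x)q(x)$ is controlled the same way; retaining signs in the very same computation shows in addition that $\int_0^\infty L[q](x)\,\mathrm{d}x = 0$ for every $q \in L^1(0,+\infty)$.

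Define $\mathcal{T}$ on $\mathcal{C}([0,T];L^1(0,+\infty))$ to be the right-hand side of \eqref{eq:mild}. Then $\|\mathcal{T}[p](t) - \mathcal{T}[\tilde p](t)\|_{L^1} \le \int_0^t \|L[p(s) - \tilde p(s)]\|_{L^1}\,\mathrm{d}s \le 2aT \sup_{[0,T]} \|p - \tilde p\|_{L^1}$, so $\mathcal{T}$ is a contraction once $T < 1/(2a)$, which produces a unique mild solution on $[0,T]$; since $T$ does not depend on the data, iterating over $[kT,(k+1)T]$ extends it uniquely to $[0,+\infty)$. Integrating \eqref{eq:mild} in $x$ and invoking the two facts above gives $\int_0^\infty p(t,x)\,\mathrm{d}x = \int_0^\infty p_0(x)\,\mathrm{d}x$ for all $t \ge 0$, while taking $L^1$ norms in \eqref{eq:mild} and applying Gronwall's lemma to $t \mapsto \|p(t)\|_{L^1}$ yields \eqref{eq:stability-L1} with $C = 2a$.

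For the last assertion, given $p_0 \in \mathcal{C}^{1,\mathrm{b}}$ I would rerun the same fixed point in $\mathcal{C}([0,T];\mathcal{C}^{1,\mathrm{b}})$. The map $X_t\#$ plainly preserves $\mathcal{C}^{1,\mathrm{b}}$, and so does $L$: the input $c$ is smooth and bounded together with its derivatives (immediate for $H>0$, and for $H<0$ after multiplying numerator and denominator of \eqref{eq:c-def} by $x^{-H}$ and using that $H \in \mathbb{Z}$), while $\omega' = -\omega/b$ with $\omega(0) = 1/b$ turns the $x$-derivative of the convolution term into $\tfrac1b\, c p - \tfrac1b \int_0^\cdot \omega(\cdot - y) c(y) p(y)\,\mathrm{d}y$, again bounded and continuous. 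One then differentiates \eqref{eq:mild} in $t$ (via the Leibniz rule applied to the Duhamel integral) and in $x$ to check that the fixed point is a genuine $\mathcal{C}^1$ solution of \eqref{eq:Fried_good}, and uniqueness of classical solutions follows from that of mild solutions. I expect this regularity bootstrap — in particular the behaviour near the boundary $x=0$, where the transport coefficient degenerates, together with the justification of differentiation under the integral sign — to be the only mildly delicate point; everything else is routine since the equation is linear with globally bounded coefficients.
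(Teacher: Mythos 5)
Your argument is correct and follows essentially the same route as the paper: a Banach/Picard fixed point for the Duhamel (mild) formulation in $\mathcal{C}([0,T];L^1(0,+\infty))$, extended to all times by linearity and time-translation invariance, with mass conservation coming from $\int_0^\infty L[q]\,\mathrm{d}x=0$ and the classical regularity obtained by rerunning the iteration in a $\mathcal{C}^{1,\mathrm{b}}$ setting. You merely make explicit the quantitative ingredients (the $L^1$-isometry of $X_t\#$, the bound $\|L[q]\|_1\le 2a\|q\|_1$, the contraction for $T<1/(2a)$, and Gronwall giving $C=2a$ in \eqref{eq:stability-L1}) that the paper delegates to standard references.
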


\begin{proof}
  This result can be obtained by considering the functional:
  \begin{equation*}
    \Phi[p](t,x) := 
    (X_t \# p_0)(x)
    + \int_0^t \big (X_{t-s} \# L[p(s,\cdot)] \big)(x) \d s,
  \end{equation*}
  defined on the Banach space
  \begin{equation*}
    Y := \{ p \in \mathcal{C}([0,T]; L^1(0,+\infty))
    \mid p(0) = p_0 \}
  \end{equation*}
  with norm
  \begin{equation*}
    \| p \|_Y := \sup_{t \in [0,T]} \| p_t \|_1,
  \end{equation*}
  for $T > 0$ small enough. Note that
  $$
  \int_0^\infty \Phi[p](t,x)\, \mathrm{d}x = \int_0^\infty p(t,x)\, \mathrm{d}x = \int_0^\infty p_0(x)\, \mathrm{d}x 
    \qquad \text{for all $t \geq 0$.}
  $$  
  By following an argument very similar to that of Picard iterations,
  one obtains the existence of mild solutions on a time interval
  $[0,T]$. Since the equation is linear (and our equation is invariant
  under time translations), this argument can be iterated to find
  solutions on $[0,+\infty)$. We refer to \cite{EngNag,CCC} for full
  details of this standard argument.

If the initial condition $p_0$ is in $\mathcal{C}^{1, \mathrm{b}}(0,+\infty)$, one
can see that the iteration above can also be done in the
space $Z := \{ p \in \mathcal{C}^{1,\mathrm{b}}([0,T] \times (0,+\infty))
\mid p(0,x) = p_0(x) \text{ for $x > 0$} \}$. This gives the existence of a unique classical solution in this space.
\end{proof}

The constructed solutions have basic properties: positivity
preserving, $L^1$-contraction, and maximum principle.

\begin{lemma}
  \label{lem:positivity}
  Take $p_0 \in L^1(0,+\infty)$ and let $p$ be the unique mild
  solution to equation \eqref{eq:Fried_good} given by Theorem
  \ref{thm:exist_mild}.
  \begin{enumerate}
  \item Positivity is preserved: if $p_0 \geq 0$ a.e. then
    $p(t) \geq 0$ a.e., for all $t \geq 0$.
  \item The $L^1$ norm is decreasing
    \begin{equation*}
      \|p(t) \|_1 \leq \|p_0\|_1
      \qquad \text{for all $t \geq 0$,}
    \end{equation*}
    leading to $L^1$-contraction by linearity. If $p_0 \geq 0$, the above inequality becomes an identity.
  \item Maximum principle:
    \begin{equation*}
      \inf_{x > 0} \frac{p_0(x)}{P_\infty(x)}
      \leq
      \frac{p(t,x)}{P_\infty(x)}
      \leq
      \sup_{x > 0} \frac{p_0(x)}{P_\infty(x)}.
    \end{equation*}
  \end{enumerate}
\end{lemma}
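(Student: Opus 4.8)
The plan is to read all three items off the mild formulation \eqref{eq:mild} by absorbing the loss term into the transport semigroup, so that the only remaining source term is positivity-preserving, and then to exploit linearity together with the mass conservation already recorded in Theorem \ref{thm:exist_mild}. Concretely, I would split $L = L_+ - L_-$ with $L_+[p](x) := a\int_0^x \omega(x-y)c(y)p(y)\,\mathrm{d}y \ge 0$ and $L_-[p](x) := ac(x)p(x)$, and introduce the transport-with-absorption semigroup $\mathcal{S}_t$ solving $\partial_t q - \partial_x(xq) + ac(x)q = 0$; the method of characteristics gives the explicit formula $(\mathcal{S}_t f)(x) = f(xe^t)\,e^t \exp\!\bigl(-a\int_0^t c(xe^{t-s})\,\mathrm{d}s\bigr)$, which, since $0 < c \le 1$, maps nonnegative functions to nonnegative functions and satisfies $\|\mathcal{S}_t f\|_1 \le \|f\|_1$. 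One checks that $p \in \mathcal{C}([0,\infty);L^1(0,+\infty))$ solves \eqref{eq:mild} if and only if it solves the Duhamel identity $p(t) = \mathcal{S}_t p_0 + \int_0^t \mathcal{S}_{t-s} L_+[p(s)]\,\mathrm{d}s$. If $p_0 \ge 0$, the Picard iteration started at $p^{(0)} \equiv 0$ then produces, by induction, iterates that are nonnegative and nondecreasing in $n$ (both $\mathcal{S}_t$ and $L_+$ send the positive cone into itself); on a short enough time interval these converge to the unique mild solution, which is therefore nonnegative, and iterating over consecutive time intervals gives item (1).

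For item (2), when $p_0 \ge 0$ item (1) gives $p(t) \ge 0$, so $\|p(t)\|_1 = \int_0^\infty p(t,x)\,\mathrm{d}x = \int_0^\infty p_0(x)\,\mathrm{d}x = \|p_0\|_1$ by the mass conservation of Theorem \ref{thm:exist_mild}, which is exactly the asserted identity. For signed $p_0 \in L^1$, decompose $p_0 = p_0^+ - p_0^-$ and let $p^\pm$ be the mild solutions with the nonnegative data $p_0^\pm$; by linearity and uniqueness $p = p^+ - p^-$ with $p^\pm \ge 0$, hence $|p(t)| \le p^+(t) + p^-(t)$ a.e.\ and $\|p(t)\|_1 \le \|p^+_0\|_1 + \|p^-_0\|_1 = \|p_0\|_1$. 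Applying this inequality to the difference of two solutions (again using linearity) yields the $L^1$-contraction.

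For item (3), set $M := \sup_{x>0} p_0(x)/P_\infty(x)$ and $m := \inf_{x>0} p_0(x)/P_\infty(x)$; the corresponding inequality is vacuous if $M = +\infty$ (resp.\ $m = -\infty$), so assume both finite. Rewriting \eqref{eq:Fried_infy} as $L[P_\infty] = -\partial_x(xP_\infty)$, a direct computation shows that the constant curve $t \mapsto P_\infty$ satisfies \eqref{eq:mild}, so by uniqueness it is the mild solution with data $P_\infty \in L^1$, and by linearity the same holds for $M P_\infty$ and $m P_\infty$. Then $M P_\infty - p_0 \ge 0$ and $p_0 - m P_\infty \ge 0$ are admissible nonnegative initial data, and their solutions are, respectively, $M P_\infty - p(t)$ and $p(t) - m P_\infty$; item (1) forces both to stay nonnegative, i.e.\ $m P_\infty \le p(t) \le M P_\infty$ for all $t \ge 0$.

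The only genuinely delicate point is item (1): one must verify carefully that $\mathcal{S}_t$ is a strongly continuous, positivity-preserving semigroup on $L^1(0,+\infty)$ and that the monotone Picard iteration in Duhamel form actually converges to the unique mild solution of \eqref{eq:mild}, so that nonnegativity is inherited by the limit. Once this and the mass conservation of Theorem \ref{thm:exist_mild} are in hand, items (2) and (3) follow immediately from linearity.
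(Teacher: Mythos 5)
Your proposal is correct and follows essentially the same route as the paper: the same splitting of $L$ into a nonnegative gain operator plus an absorption term folded into the transport semigroup, the same Duhamel representation to get positivity, the same positive/negative-part decomposition combined with mass conservation for the $L^1$ bound, and the same $M P_\infty - p$ (resp.\ $p - m P_\infty$) trick for the maximum principle. The only minor difference is technical: you establish nonnegativity of mild solutions via a monotone Picard iteration directly in the Duhamel form, whereas the paper first treats classical solutions and then passes to mild solutions by approximation using the $L^1$-stability estimate; both are standard and equally valid.
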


\begin{proof}
  In order to show that positivity is preserved for any classical
  solution, we can rewrite, using Duhamel's formula,
  \begin{equation*}
    p(t) = S_t p_0 + \int_0^t S_{t-s} L^+[p(s)] \,\d s,
  \end{equation*}
  where $S_t$ is the semigroup associated to the equation
  $\partial_t p - \partial_x (x p) + a c(x) p = 0$ and $L^+$ is the
  operator given by
  \begin{equation*}
    L^+[p(t)](x) := a \int_0^x \omega(x-y)c(y)p(t,y) \,\d y
    \qquad x > 0.
  \end{equation*}
  This way of writing the solution clearly shows $p$ is nonnegative if
  $p_0$ is nonnegative. Now, for a mild solution we obtain the same
  result by approximation from classical solutions, taking into account the $L^1$-stability
  \eqref{eq:stability-L1}.

  For the second part of the result, denote by $T_t$ the semigroup in
  $L^1(0,+\infty)$ defined by the equation, and write
  $f_+ := \max\{0, f\}$, $f_- := \max\{0, -f\}$ for the positive and
  negative parts of a function $f$, so that $f = f_+ - f_-$. The
  positivity and mass preservation imply that:
  \begin{multline*}
    \| p(t) \|_1 = \| T_t p_0 \|_1
    \leq \|T_t ((p_0)_+) \|_1 + \|T_t ((p_0)_-) \|_1
    \\
    = \int T_t ((p_0)_+) + \int T_t ((p_0)_-)
    = \int (p_0)_+ + \int (p_0)_-
    = \| p_0 \|_1.
  \end{multline*}

  Finally, for the maximum principle just notice that, if $M$ is the
  supremum on the right hand side, the function $q = M P_\infty - p$
  is a mild solution with nonnegative initial data. Due to
  preservation of positivity we obtain the inequality on the
  right-hand side. The minimum principle is obtained analogously.
\end{proof}

\subsection{Entropy and $H$-theorem}
\label{sec:entropy}

Let $H \: [0,+\infty) \to \R$ be a convex function. We define the general
relative entropy functional as:
\begin{equation}\label{eq:def_GRE}
  \mathcal{G}_H(u)(t)= \int_{0}^{\infty} H(u(t,x)) P_{\infty}(x) \mathrm{d}x,
\end{equation}
with $u(t,x):=p(t,x)/P_\infty(x)$. The basic general relative entropy
principle is that $\mathcal{G}_H(p(t)/P_\infty)$ is a decreasing
quantity when $p(t)$ is a solution to \eqref{eq:Fried_good}, see
\cite{Micheletal04,Micheletal05,Carrilloetal11,pajaroetal16b}.

\begin{proposition}
  \label{prop:Hconvex_entropy1D}
  Let $H \: [0,+\infty) \to \R$ is a convex function in
  $\mathcal{C}^1([0,+\infty))$ and let $p$ be a classical solution to
  \eqref{eq:Fried_good} with integrable initial condition
  $p_0 \in \mathcal{C}^{1,b}[0,+\infty)$ such that
  $|p_0(x)| \leq M P_\infty(x)$ for some $M > 0$. Thus, the relative entropy satisfies
  \begin{multline}
    \label{eq:Dt_Gentropy_eqn}
    \dfrac{\mathrm{d}\mathcal{G}_H(u)}{\mathrm{d} t}
    = a\!\int_{0}^{\infty}\!\!\!\!\int_{y}^{\infty}\!\!\omega(x-y)\Big( H(u(x)) - H(u(y)) +H'(u(x))\left(u(y)-u(x)\right) \Big)c(y)P_{\infty}(y) \,\mathrm{d} x \mathrm{d} y\leq 0\,,
  \end{multline}
for all $t\geq 0$.
\end{proposition}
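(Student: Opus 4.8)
The plan is to differentiate $\mathcal{G}_H$ directly and then to use the equation together with the stationary equation for $P_\infty$. Since $u=p/P_\infty$ and $P_\infty$ does not depend on $t$, we have $\partial_t\big(H(u)P_\infty\big)=H'(u)\,\partial_t u\,P_\infty=H'(u)\,\partial_t p$, so, writing $L[p](x):=a\int_0^x\omega(x-y)c(y)p(y)\,\d y-ac(x)p(x)$ for the right-hand side of \eqref{eq:Fried_good}, the first step is to obtain
\[
  \ddt\mathcal{G}_H(u)=\int_0^\infty H'(u)\,\partial_t p\,\d x
  =\int_0^\infty H'(u)\,\partial_x(xp)\,\d x+\int_0^\infty H'(u)\,L[p]\,\d x .
\]
All the manipulations will be legitimate because Lemma~\ref{lem:positivity} gives $0\le p(t,x)\le M P_\infty(x)$, hence $0\le u\le M$, so that $H(u)$ and $H'(u)$ stay bounded and every integrand is dominated by a fixed multiple of $\omega$ and/or of $P_\infty$; differentiation under the integral sign and Fubini's theorem then apply. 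I expect the main obstacle to be exactly the control of the boundary terms in the integrations by parts below, which is where the known behaviour of $P_\infty$ at $0$ and $\infty$ comes in.

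Next I would treat the transport term: write $\partial_x(xp)=u\,\partial_x(xP_\infty)+xP_\infty\,\partial_x u$ and integrate the second summand by parts, using $H'(u)\partial_x u=\partial_x\big(H(u)\big)$, to get
\[
  \int_0^\infty H'(u)\,\partial_x(xp)\,\d x
  =\int_0^\infty \partial_x(xP_\infty)\,\big(uH'(u)-H(u)\big)\,\d x
  +\Big[\,xP_\infty\,H(u)\,\Big]_0^\infty .
\]
The boundary term vanishes: as $x\to\infty$, $xP_\infty\to0$ because $P_\infty$ decays exponentially; as $x\to0^+$, $xP_\infty(x)\to0$ in every regime listed in Section~\ref{sec:equilibrium_prop} (there $xP_\infty$ behaves like $x^{a}$ or $x^{a\varepsilon}$ near the origin, with $a,a\varepsilon>0$), while $H(u)$ stays bounded; the boundary term $[\,H'(u)\,xp\,]_0^\infty$ of the intermediate integration by parts vanishes for the same reasons. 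Then I would substitute the stationary equation \eqref{eq:Fried_infy}, $\partial_x(xP_\infty)=-a\int_0^x\omega(x-y)c(y)P_\infty(y)\,\d y+ac(x)P_\infty(x)$, into the first integral on the right.

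The final step is to recombine the resulting terms. The local contribution $ac(x)P_\infty(x)\big(uH'(u)-H(u)\big)$ produced by the transport term cancels, against the loss part $-ac(x)H'(u)\,p=-ac(x)P_\infty(x)\,uH'(u)$ of $L[p]$, down to $-ac(x)P_\infty(x)H(u(x))$; using $\int_0^\infty\omega=1$, that is $1=\int_y^\infty\omega(x-y)\,\d x$, this term rewrites as $-a\int_0^\infty\!\int_y^\infty\omega(x-y)c(y)P_\infty(y)H(u(y))\,\d x\,\d y$. Adding the remaining two double integrals — the term $-a\int_0^\infty\big(\int_0^x\omega(x-y)c(y)P_\infty(y)\,\d y\big)\big(uH'(u)-H(u)\big)(x)\,\d x$ from the transport piece and the gain term $a\int_0^\infty H'(u(x))\int_0^x\omega(x-y)c(y)P_\infty(y)u(y)\,\d y\,\d x$ of $L[p]$ — all three integrals run over $\{0<y<x\}$, and the sum of the integrands will equal exactly
\[
  H(u(x))-H(u(y))+H'(u(x))\big(u(y)-u(x)\big),
\]
which yields the right-hand side of \eqref{eq:Dt_Gentropy_eqn}. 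By convexity, $H(u(y))\ge H(u(x))+H'(u(x))(u(y)-u(x))$, so this bracket is $\le0$; since $\omega,c,P_\infty\ge0$, the whole double integral, and therefore $\ddt\mathcal{G}_H(u)$, is $\le0$. Apart from the boundary-term and interchange issues flagged above, recombining the transport, gain and loss contributions into this single symmetric double integral is a routine computation once \eqref{eq:Fried_infy} and the normalisation $\int_0^\infty\omega=1$ are used.
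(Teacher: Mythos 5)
Your argument is correct and follows essentially the same route as the paper: differentiate $\mathcal{G}_H$, rewrite the transport contribution as $\big(uH'(u)-H(u)\big)\partial_x(xP_\infty)$ plus a vanishing boundary term (the paper packages this chain-rule step as Lemma~\ref{lemma:l2}), substitute the stationary equation \eqref{eq:Fried_infy}, cancel the local $uH'(u)cP_\infty$ terms, and use $\int_y^\infty\omega(x-y)\,\d x=1$ with Fubini to reach the symmetric double integral, concluding by convexity. The justification of boundedness of $u$ via the maximum principle and of the boundary terms via the behaviour of $xP_\infty$ at $0$ and $\infty$ matches the paper's treatment.
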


\begin{remark}
  Notice that the dependence on the time variable in
  \eqref{eq:Dt_Gentropy_eqn} has been omitted for simplicity. Observe
  that the right-hand side in \eqref{eq:Dt_Gentropy_eqn} is
  non-positive since the convexity of $H$ implies
  $H(u)-H(v)+H'(u)(v-u) \le 0$ for all $u,v\in\R$.
\end{remark}

Proposition \ref{prop:Hconvex_entropy1D} is very close to the results
in Section 2 of \cite{Micheletal05}, but is strictly not contained
there due to the form of the integral operator. It is worth giving a
derivation of the result, so we include a proof here. We first obtain
a technical lemma involving some classical computations in
\cite{Micheletal05}:

\begin{lemma}
  \label{lemma:l2}
  Under the assumptions of Proposition
  \ref{prop:Hconvex_entropy1D}, then the following equality is satisfied
  \begin{equation}\label{eq:auxl1}
    H'(u(x))\dfrac{\partial [xp(x)]}{\partial x}
    =
    \dfrac{\partial [H(u(x))xP_{\infty}(x)]}{\partial x}
    + \left[u(x)H'(u(x))-H(u(x))\right]
    \dfrac{\partial [xP_{\infty}(x)]}{\partial x}\,.
  \end{equation}
\end{lemma}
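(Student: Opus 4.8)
The plan is to reduce \eqref{eq:auxl1} to the Leibniz rule by writing everything through $u := p/P_\infty$, so that $p = u P_\infty$ and $xp(x) = u(x)\,\big(xP_\infty(x)\big)$. Since $p$ is a classical solution and, by the explicit formula \eqref{eq:analytic_sol}, $P_\infty$ is $\mathcal{C}^1$ and strictly positive on $(0,+\infty)$, the ratio $u$ is $\mathcal{C}^1$ in $x$ on $(0,+\infty)$; moreover, $u$ is bounded (Lemma \ref{lem:positivity}), so that $H(u)$ and $H'(u)$ are well defined and $\mathcal{C}^1$ in $x$, and every derivative below is a genuine pointwise identity at fixed $t$.

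First I differentiate $xp = u\cdot(xP_\infty)$ by the product rule, obtaining
\[
  \frac{\partial [xp(x)]}{\partial x}
  = \frac{\partial u}{\partial x}(x)\, xP_\infty(x)
  + u(x)\, \frac{\partial [xP_\infty(x)]}{\partial x},
\]
and multiplying by $H'(u(x))$ gives
\[
  H'(u(x))\,\frac{\partial [xp(x)]}{\partial x}
  = H'(u(x))\,\frac{\partial u}{\partial x}(x)\, xP_\infty(x)
  + u(x)H'(u(x))\, \frac{\partial [xP_\infty(x)]}{\partial x}.
\]
On the other hand, the chain rule together with the product rule yields
\[
  \frac{\partial [H(u(x))\,xP_\infty(x)]}{\partial x}
  = H'(u(x))\,\frac{\partial u}{\partial x}(x)\, xP_\infty(x)
  + H(u(x))\, \frac{\partial [xP_\infty(x)]}{\partial x}.
\]
Adding $\big(u(x)H'(u(x)) - H(u(x))\big)\,\partial_x[xP_\infty(x)]$ to both sides of the last identity reproduces exactly the expression for $H'(u(x))\,\partial_x[xp(x)]$ displayed above, which is \eqref{eq:auxl1}.

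There is no real obstacle here: the lemma is a purely algebraic consequence of the product and chain rules, the only point worth checking being the $\mathcal{C}^1$ regularity and boundedness of $u$, which we have already recorded. The role this identity plays afterwards is that, when one multiplies the transport term $\partial_x(xp)$ of \eqref{eq:Fried_good} by $H'(u)$ and integrates, it can be rewritten as a perfect $x$-derivative (which integrates to a boundary term) plus a term proportional to $\partial_x(xP_\infty)$, the latter being re-expressed through the stationary equation \eqref{eq:Fried_infy}; this is what produces the symmetrised integral form in \eqref{eq:Dt_Gentropy_eqn}.
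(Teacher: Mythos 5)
Your proof is correct and takes essentially the same route as the paper: both establish \eqref{eq:auxl1} as a pointwise identity by the product and chain rules, the paper merely reaching it by first expanding $\partial_x H(u(x))$ in terms of $\partial_x p$ and $\partial_x P_\infty$ and then using $x\,\partial_x p=\partial_x(xp)-p$ (and likewise for $P_\infty$), whereas you group $xp=u\cdot(xP_\infty)$ from the start. Your bookkeeping is slightly more direct, but it is the same elementary computation.
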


\begin{proof}
  We know that
  \begin{equation*}
    \dfrac{\partial H(u(x))}{\partial x}=H'(u(x))\dfrac{\partial u}{\partial x}=\dfrac{H'(u(x))}{P_{\infty}(x)}\left(\dfrac{\partial p}{\partial x} -u(x)\dfrac{\partial P_{\infty}}{\partial x} \right),
  \end{equation*}
  and
  \begin{equation*}
    \dfrac{\partial [H(u(x))xP_{\infty}(x)]}{\partial x}= xP_{\infty}(x)\dfrac{\partial H(u(x))}{\partial x} +H(u(x))\dfrac{\partial [xP_{\infty}(x)]}{\partial x}.
  \end{equation*}
  So that, replacing the first expression in the second we have that:
  \begin{equation}\label{eq:aux1_lemma1}
    \dfrac{\partial [H(u(x))xP_{\infty}(x)]}{\partial x}= xH'(u(x))\left(\dfrac{\partial p}{\partial x} -u(x)\dfrac{\partial P_{\infty}}{\partial x} \right) +H(u(x))\dfrac{\partial [xP_{\infty}(x)]}{\partial x}.
  \end{equation}
  Next, by using the following identities:
  \begin{equation*}
    x\dfrac{\partial p}{\partial x}= \dfrac{\partial[x p(x)]}{\partial x}-p(x) ~~\text{and}~~ x\dfrac{\partial P_{\infty}}{\partial x}= \dfrac{\partial[x P_{\infty}(x)]}{\partial x}-P_{\infty}(x),
  \end{equation*}
  in (\ref{eq:aux1_lemma1}) we obtain:
  \begin{align*}
    \dfrac{\partial [H(u(x))xP_{\infty}(x)]}{\partial x} =&\, H'(u(x))\left(\dfrac{\partial[x p(x)]}{\partial x} -p(x) -u(x)\left(\dfrac{\partial [xP_{\infty}(x)]}{\partial x} - P_{\infty}(x)\right)\right)\\
                                                          & +H(u(x))\dfrac{\partial [xP_{\infty}(x)]}{\partial x}\\
    =&\, H'(u(x))\left(\dfrac{\partial[x p(x)]}{\partial x} -u(x)\dfrac{\partial [xP_{\infty}(x)]}{\partial x} \right) +H(u(x))\dfrac{\partial [xP_{\infty}(x)]}{\partial x}.
  \end{align*}
  Note that the terms $u(x)P_{\infty}(x)-p(x)$ vanish, since
  $u(x)P_{\infty}=p(x)$. Finally, reordering terms in the last
  equation we obtain the equality (\ref{eq:auxl1}).
\end{proof}

\begin{proof}[Proof of Proposition
  \ref{prop:Hconvex_entropy1D}]
  We start the proof computing the time derivative of the general
  relative entropy functional
  \begin{align*}
    \dfrac{\mathrm{d}\mathcal{G}_H(u)}{\mathrm{d}t}=  \dfrac{\partial }{\partial t} \int_{0}^{\infty}H(u(x))P_{\infty}(x)\mathrm{d}x &=\int_{0}^{\infty}\dfrac{\partial }{\partial t} H(u(x))P_{\infty}(x)\mathrm{d}x=\int_{0}^{\infty}H'(u(x))\dfrac{\partial p}{\partial t} \mathrm{d}x .
  \end{align*}
  We replace the time derivative of $p(\tau,x)$ by its expression
  (\ref{eq:Fried_good}) to obtain:
  \begin{equation*}
    \dfrac{\mathrm{d}\mathcal{G}_H(u)}{\mathrm{d}t}= \int_{0}^{\infty}H'(u(x))\left( \dfrac{\partial [xp(x)]}{\partial x} + a \int_0^x  \omega(x-y)c(y)p(y) \mathrm{d}y - ac(x)p(x) \right)\mathrm{d}x .
  \end{equation*}
  Using lemma \ref{lemma:l2} and the fact that
  $p(x)=u(x)P_{\infty}(x)$ we have:
  \begin{align*}
    \dfrac{\mathrm{d}\mathcal{G}_H(u)}{\mathrm{d}t}=&\, \int_{0}^{\infty}\left(\dfrac{\partial [H(u(x))xP_{\infty}(x)]}{\partial x} + \left(u(x)H'(u(x))-H(u(x))\right)\dfrac{\partial [xP_{\infty}(x)]}{\partial x}\right)\mathrm{d}x \\	
                                                       &\, +a\int_{0}^{\infty}H'(u(x))\left( \int_0^x  \omega(x-y)c(y)u(y)P_{\infty}(y) \mathrm{d}y - c(x)u(x)P_{\infty}(x)\right) \mathrm{d}x .
  \end{align*}
  In the above equation the term
  $$
  \int_{0}^{\infty}\frac{\partial [H(u(x))xP_{\infty}(x)]}{\partial
    x}\mathrm{d}x
  $$
 vanishes since
  $\lim_{x \to +\infty} x P_\infty(x) = \lim_{x \to 0} x P_\infty(x) =
  0$,
  and noticing that $u(x) \leq M$ for all $t\geq 0$, $x > 0$
  due to the maximum principle in Lemma
  \ref{lem:positivity}. Replacing the term containing the first order derivative by its value in
  equation (\ref{eq:Fried_infy}) we get
  \begin{align*}
    \dfrac{\mathrm{d}\mathcal{G}_H(u)}{\mathrm{d}t}=&\, -a\int_{0}^{\infty}\left(u(x)H'(u(x))-H(u(x))\right) \left( \int_0^x  \omega(x-y)c(y)P_{\infty}(y) \mathrm{d}y - c(x)P_{\infty}(x)\right)\mathrm{d}x \\
                                                       &\, +a\int_{0}^{\infty}H'(u(x))\left( \int_0^x  \omega(x-y)c(y)u(y)P_{\infty}(y) \mathrm{d}y - c(x)u(x)P_{\infty}(x)\right) \mathrm{d}x .
  \end{align*}
  Reordering terms in the above equation we have that
  \begin{align*}
    \dfrac{\mathrm{d}\mathcal{G}_H(u)}{\mathrm{d}t}=&\, a\int_{0}^{\infty}H(u(x))\left( \int_0^x  \omega(x-y)c(y)P_{\infty}(y) \mathrm{d}y - c(x)P_{\infty}(x)\right)\mathrm{d}x \\
                                                       &\, +a\int_{0}^{\infty}H'(u(x)) \left( \int_0^x  \omega(x-y)c(y)u(y)P_{\infty}(y) \mathrm{d}y -u(x)\int_0^x  \omega(x-y)c(y)P_{\infty}(y) \mathrm{d}y \right) \mathrm{d}x .
  \end{align*}
  Note that
  $$\int_{0}^{\infty}H(u(x)) c(x)P_{\infty}(x)\mathrm{d}x=\int_{0}^{\infty}H(u(y)) c(y)P_{\infty}(y)\mathrm{d}y,$$ so we can change the order of integration in the above equation to obtain
  \begin{align*}
    \dfrac{\mathrm{d}\mathcal{G}_H(u)}{\mathrm{d}t}=&\, a\int_{0}^{\infty}\left( \int_y^{\infty}  \omega(x-y)H(u(x)) \mathrm{d}x c(y)P_{\infty}(y)  - H(u(y))c(y)P_{\infty}(y)\right)\mathrm{d}y \\
                                                             &\, +a\int_{0}^{\infty} \int_y^{\infty}  \omega(x-y)\left[H'(u(x))\left(u(y)-u(x)\right)\right]c(y)P_{\infty}(y) \mathrm{d} x \mathrm{d}y .
  \end{align*}
  Since $\int_{y}^{\infty}\omega(x-y)\mathrm{d}x=1$, we multiply
  by this integral the second term in the first line on the
  right-hand side of the above equation to conclude
  \begin{align*}
    \dfrac{\mathrm{d}\mathcal{G}_H(u)}{\mathrm{d}t}=&\, a\int_{0}^{\infty} \int_y^{\infty}  \omega(x-y) \left[ H(u(x)) -  H(u(y))\right]  c(y)P_{\infty}(y)\mathrm{d}x \mathrm{d}y \\
                                                       &\, +a\int_{0}^{\infty} \int_y^{\infty}  \omega(x-y)\left[H'(u(x))\left(u(y)-u(x)\right)\right]c(y)P_{\infty}(y) \mathrm{d} x \mathrm{d}y \,,
  \end{align*}
which is the desired identity.
\end{proof}
	

\section{Exponential convergence for the 1D PIDE model}
\label{sec:exp_conv}
        
In this section our aim is to prove that equation
(\ref{eq:Fried_good}) converges exponentially to the steady
state, $P_{\infty}$. For this purpose, we consider the $L^2$-relative entropy, i.e., the convex function $H$ is chosen as $H(u)=(u-1)^2$, and
\begin{align*}
\mathcal{G}_2(u)(t):=\int_{0}^{\infty}P_\infty(x)(u(t,x)-1)^2 \mathrm{d}x &= \int_{0}^{\infty}\dfrac{p^2(t,x)}{P_\infty^2(x)}P_\infty(x) \mathrm{d}x -1 = \int_{0}^{\infty} u^2(t,x) P_\infty(x) \mathrm{d}x -1,
\end{align*}
where we have used that $p(t,x)$ and $P_\infty(x)$ are probability density functions. Now, by replacing the value of the considered convex function in Proposition \ref{prop:Hconvex_entropy1D}, we obtain the following identity
\begin{equation}\label{eq:Dt_entropy_eqn}
\mathcal{D}_2(u)(t):=-\dfrac{\mathrm{d}\mathcal{G}_2(u)}{\mathrm{d} t} = a\int_{0}^{\infty}\int_{y}^{\infty}\omega(x-y)\left( u(t,x) - u(t,y) \right)^2 c(y)P_\infty(y) \mathrm{d} x \mathrm{d} y.
\end{equation}
The entropy method consists in finding conditions under which the following functional inequality holds:
\begin{equation}\label{ineq:expGRE}
\mathcal{G}_2(u) \leq \dfrac{1}{2\beta}\mathcal{D}_2(u).
\end{equation}
Notice that the dependence on the time variable can be forgotten at this point, since our objective is to show such an inequality among a subset of suitable probability densities.
For this purpose, we start by rewriting $\mathcal{G}_2(u)$ in a equivalent form \cite{Caceresetal11}:

\begin{lemma}\label{l:D2eqH2}
	Given a non-negative measurable function $P_{\infty} : (0, \ \infty)\rightarrow {\mathbb{R}}_{+}$ such that $\int_{0}^{\infty}P_{\infty}(x)\mathrm{d} x =1$ and defining the functional
	\begin{equation*}
	\mathcal{H}_2(u):=\int_{0}^{\infty}\int_{y}^{\infty}P_{\infty}(x)P_{\infty}(y)\left( u(x) - u(y) \right)^2 \mathrm{d} x \mathrm{d} y ,
	\end{equation*}
	there holds $\mathcal{G}_2(u)=\mathcal{H}_2(u)$.
\end{lemma}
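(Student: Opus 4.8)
The plan is to symmetrise the double integral defining $\mathcal{H}_2$ and then expand the square. The integrand $(x,y)\mapsto P_\infty(x)P_\infty(y)(u(x)-u(y))^2$ is nonnegative and invariant under the exchange $x\leftrightarrow y$, so by Tonelli's theorem its integral over $\{0<y<x\}$ equals its integral over $\{0<x<y\}$ (the diagonal being Lebesgue-null in $(0,\infty)^2$). Hence
\[
2\,\mathcal{H}_2(u)=\int_0^\infty\!\!\int_0^\infty P_\infty(x)P_\infty(y)\bigl(u(x)-u(y)\bigr)^2\,\mathrm{d}x\,\mathrm{d}y .
\]

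Next I would expand $(u(x)-u(y))^2=u(x)^2-2u(x)u(y)+u(y)^2$ and split the integral. When $u\in L^2((0,\infty),P_\infty)$ all three pieces are absolutely integrable (the mixed one by Cauchy--Schwarz), and using $\int_0^\infty P_\infty\,\mathrm{d}x=1$ the two ``square'' terms each contribute $\int_0^\infty u^2P_\infty\,\mathrm{d}x$, while the mixed term contributes $-2\bigl(\int_0^\infty uP_\infty\,\mathrm{d}x\bigr)^2$. Since $p=uP_\infty$ is a probability density, $\int_0^\infty uP_\infty\,\mathrm{d}x=\int_0^\infty p\,\mathrm{d}x=1$, so the right-hand side above equals $2\int_0^\infty u^2P_\infty\,\mathrm{d}x-2$, and therefore
\[
\mathcal{H}_2(u)=\int_0^\infty u^2(x)P_\infty(x)\,\mathrm{d}x-1=\mathcal{G}_2(u),
\]
which is the claimed identity.

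I do not expect any genuine obstacle: the entire content is the symmetrisation identity above, and the only care needed is in justifying Fubini--Tonelli and in the degenerate case $u\notin L^2((0,\infty),P_\infty)$. For the latter one observes that $u-1\in L^1(P_\infty)$ (as $u\ge0$ with $\int uP_\infty=1$), truncates to the sets $\{|u-1|\le n\}$, and lets $n\to\infty$: the square terms diverge by monotone convergence while the mixed term remains bounded, so $\mathcal{H}_2(u)=+\infty=\mathcal{G}_2(u)$ and the identity persists in $[0,+\infty]$. In any event, only the finite case $u\in L^2((0,\infty),P_\infty)$ is used in the entropy method of Section \ref{sec:exp_conv}, this being precisely the condition under which $\mathcal{G}_2(u)$ is meaningful.
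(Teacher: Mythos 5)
Your argument is correct and is essentially the paper's own proof: symmetrise $\mathcal{H}_2$ to half of the integral over the full quadrant, expand the square, and use the normalisations $\int_0^\infty P_\infty\,\mathrm{d}x=1$ and $\int_0^\infty uP_\infty\,\mathrm{d}x=\int_0^\infty p\,\mathrm{d}x=1$ to reduce both sides to $\int_0^\infty u^2P_\infty\,\mathrm{d}x-1$. The extra remarks on Tonelli and the case $u\notin L^2(P_\infty)$ are sound but add nothing beyond the paper's computation.
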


\begin{proof}
Expanding the square implies
\begin{equation}\label{eq_aux_lem_GH}
\mathcal{G}_2(u)=\int_{0}^{\infty}P_{\infty}(x)(u(x)-1)^2 \mathrm{d}x= \int_{0}^{\infty}P_{\infty}(x)u(x)^2 \mathrm{d}x -1,
\end{equation}
while $\mathcal{H}_2(u)$ is a symmetric function, so that:
\begin{align*}
\mathcal{H}_2(u)(\tau) =&\, \dfrac{1}{2}\int_{0}^{\infty}\int_{0}^{\infty}P_{\infty}(x)P_{\infty}(y)\left( u(x) - u(y) \right)^2 \mathrm{d} x \mathrm{d} y  \\
=&\, \dfrac{1}{2}\int_{0}^{\infty}\int_{0}^{\infty}P_{\infty}(x)P_{\infty}(y)\left( u(x)^2 -2u(x) u(y) +u(y)^2 \right) \mathrm{d} x \mathrm{d} y \\
=&\, \int_{0}^{\infty}\int_{0}^{\infty}P_{\infty}(x)P_{\infty}(y)u(x)^2  \mathrm{d} x \mathrm{d} y - \int_{0}^{\infty}\int_{0}^{\infty}P_{\infty}(x)P_{\infty}(y)u(x) u(y)  \mathrm{d} x \mathrm{d} y \\
=&\, \int_{0}^{\infty} P_{\infty}(x)u(x)^2 \left(\int_{0}^{\infty}P_{\infty}(y)  \mathrm{d} y\right) \mathrm{d} x - \int_{0}^{\infty}\int_{0}^{\infty}p(x) p(y)  \mathrm{d} x \mathrm{d} y\\
=&\, \int_{0}^{\infty}P_{\infty}(x)u(x)^2 \mathrm{d}x -1, 
\end{align*}
which is equivalent to (\ref{eq_aux_lem_GH}).
\end{proof}

As consequence of this lemma we are reduced to show the inequality 
\begin{equation}\label{ineq:expGRE_D2}
\mathcal{H}_2(u) \leq \dfrac{1}{2\beta}\mathcal{D}_2(u),
\end{equation}
among a suitable subset of probability densities.

\subsection{Entropy-entropy production inequality}

We start by obtaining bounds for the steady state solution
$P_{\infty}$, of the Friedman equation (\ref{eq:Fried_good}).

\begin{lemma}\label{lema:Pinf_bound}
  ($P_{\infty}$ bounds) For $\delta>0$ we define the intervals of
  length $\frac{1}{2}$:
  \begin{equation*}
    I_{k,\delta}:=\left(\delta+\frac{k}{2}, \ \delta+\frac{k+1}{2} \right], \qquad k\ge 0 ~~\text{integer},
  \end{equation*}
  and 
  \begin{equation*}
    p_k:= C\left[ \left(\delta+\frac{k}{2}\right)^H +K^H \right]^{\frac{a(\varepsilon-1)}{H}}\left(\delta+\frac{k}{2}\right)^{a-1}e^{\frac{-(\delta+\frac{k}{2})}{b}}=P_{\infty}\left(\delta+\frac{k}{2}\right).
  \end{equation*}
  Then, the following inequality holds:
  \begin{equation}\label{eq:Pinf_boundary}
    A(\delta)\le \dfrac{P_{\infty}(x)}{p_k} \le B(\delta), \qquad \forall x \in I_{k,\delta} ~~\text{and}~~ \forall k,
  \end{equation}
  with $P_{\infty}(x)$ given by \eqref{eq:analytic_sol}.
\end{lemma}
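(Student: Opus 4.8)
The goal is to bound the ratio $P_\infty(x)/p_k$ uniformly over $x \in I_{k,\delta}$ and over all integers $k \geq 0$, where $p_k = P_\infty(\delta + k/2)$ is the value of $P_\infty$ at the left endpoint. Using the explicit formula \eqref{eq:analytic_sol}, write $P_\infty(x) = Z\, [x^H + K^H]^{a(\varepsilon-1)/H}\, x^{a-1}\, e^{-x/b}$, so that the ratio factorises into three pieces:
\begin{equation*}
  \frac{P_\infty(x)}{p_k}
  = \left( \frac{x^H + K^H}{(\delta+\tfrac{k}{2})^H + K^H} \right)^{\!\frac{a(\varepsilon-1)}{H}}
  \left( \frac{x}{\delta+\tfrac{k}{2}} \right)^{\!a-1}
  e^{-\frac{1}{b}\left(x - \delta - \frac{k}{2}\right)}.
\end{equation*}
The plan is to bound each of the three factors above and below by constants depending only on $\delta$ (and the fixed parameters $a, b, K, H, \varepsilon$), and then take products to obtain $A(\delta)$ and $B(\delta)$.

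\emph{First factor (exponential).} Since $x \in I_{k,\delta}$, we have $0 \leq x - \delta - \tfrac{k}{2} \leq \tfrac12$, so the exponential factor lies in $[e^{-1/(2b)}, 1]$, giving uniform two-sided bounds at once.

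\emph{Second factor (power $x^{a-1}$).} Here $x/(\delta+\tfrac{k}{2}) \in [1, 1 + \tfrac{1}{2\delta}]$ because $x \geq \delta + \tfrac{k}{2} \geq \delta$ forces $\tfrac{k}{2} \leq x - \delta$ and hence $x/(\delta+\tfrac{k}{2}) \leq 1 + \tfrac{(1/2)}{\delta+k/2} \leq 1 + \tfrac{1}{2\delta}$. Thus this factor is sandwiched between $\min\{1, (1+\tfrac{1}{2\delta})^{a-1}\}$ and $\max\{1, (1+\tfrac{1}{2\delta})^{a-1}\}$, according to the sign of $a-1$.

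\emph{Third factor (the $\rho$-type term).} This is the only delicate one, since the exponent $\tfrac{a(\varepsilon-1)}{H}$ can have either sign depending on the sign of $H$ (recall $0 < \varepsilon \leq 1$). Write $g(x) := x^H + K^H$. One needs a uniform (in $k$) two-sided bound on $g(x)/g(\delta+\tfrac k2)$ for $x \in I_{k,\delta}$. When $H > 0$, $g$ is increasing, so $1 \leq g(x)/g(\delta+\tfrac k2) \leq g(\delta+\tfrac{k+1}{2})/g(\delta+\tfrac k2)$; the latter ratio is $1 + \bigl((\delta+\tfrac{k+1}{2})^H - (\delta+\tfrac k2)^H\bigr)/g(\delta+\tfrac k2)$, and by the mean value theorem the numerator is at most $\tfrac{H}{2}(\delta+\tfrac{k+1}{2})^{H-1}$, which when divided by $(\delta+\tfrac k2)^H \geq (\delta + \tfrac k2)^H$ stays bounded uniformly in $k$ (it tends to $0$ or stays bounded as $k \to \infty$, and is continuous on a compact range of small $k$). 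When $H < 0$, $g$ is decreasing and $x^H \leq \delta^H$, $x^H \geq (\delta+\tfrac{k+1}{2})^H$; one argues symmetrically, using that $g(x) \geq K^H > 0$ provides a uniform positive lower bound on the denominator so the ratio cannot blow up. In either case one obtains constants $0 < m(\delta) \leq g(x)/g(\delta+\tfrac k2) \leq M(\delta) < \infty$ independent of $k$, and then raising to the power $\tfrac{a(\varepsilon-1)}{H}$ (taking care of the sign) converts these into two-sided bounds for the third factor.

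\emph{Conclusion.} Multiplying the three pairs of bounds yields explicit $A(\delta) > 0$ and $B(\delta) < \infty$ with $A(\delta) \le P_\infty(x)/p_k \le B(\delta)$ for all $x \in I_{k,\delta}$ and all $k \geq 0$, as claimed. \textbf{The main obstacle} is the bookkeeping in the third factor: one must verify that the ratio $g(x)/g(\delta+\tfrac k2)$ does not degenerate as $k \to \infty$ — both that it stays bounded above (clear since the interval length is fixed while $x^H$ varies slowly for large $x$) and bounded below away from $0$ (which uses the $+K^H$ shift when $H<0$, and is automatic when $H>0$) — and then to keep the inequalities pointing the right way after exponentiating by a quantity whose sign depends on $\mathrm{sgn}(H)$. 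Everything else is a routine estimate on intervals of fixed length $\tfrac12$.
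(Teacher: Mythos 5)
Your proposal is correct and follows essentially the same route as the paper: both exploit the explicit formula \eqref{eq:analytic_sol} and elementary monotonicity estimates on the length-$\tfrac12$ intervals $I_{k,\delta}$, with uniformity in $k$ coming from the fixed interval length (your $K^H$ shift and ratio bounds versus the paper's limits $A(\delta,k)\to e^{-1/(2b)}$, $B(\delta,k)\to 1$ justifying the min/max over $k$). The only organisational difference is that the paper avoids your case split on $\operatorname{sgn}(H)$ by noting that $\left[x^H+K^H\right]^{a(\varepsilon-1)/H}$ is decreasing for either sign of $H$ (the exponent's sign flips with $H$), so only the factor $x^{a-1}$ requires a case analysis.
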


\begin{proof}
  Note that $\left[ x^H +K^H \right]^{\frac{a(\varepsilon-1)}{H}}$ and
  $e^{\frac{-x}{b}}$ are decreasing functions, so that their maxima
  are at $\bar{x}_0=\delta+\frac{k}{2}$ and their minima are at
  $\bar{x}_1=\delta+\frac{k+1}{2}$ in $I_{k,\delta}$.  The term
  $x^{a-1}$ shows different behaviours which depend on the parameter
  $a$, (this term is increasing if $a>1$, constant if $a=1$ and
  decreasing if $a<1$). So that, we can bound $P_{\infty}(x)$ in the
  interval $ I_{k,\delta}$ as follows:
\begin{equation}\label{eq:Pinf_bound}
\left\lbrace
\begin{array}{ll}
g(\bar{x}_1)(\delta+\frac{k}{2})^{a-1} \le  P_{\infty}(x)  \le g(\bar{x}_0)\left(\delta+\frac{k+1}{2}\right)^{a-1}
& \qquad  \text{if}~~ a>1 \\
&\\
g(\bar{x}_1) \le  P_{\infty}(x)  \le g(\bar{x}_0)
& \qquad \text{if}~~ a=1 \\
&\\
g(\bar{x}_1)\left(\delta+\frac{k+1}{2}\right)^{a-1} \le  P_{\infty}(x)  \le g(\bar{x}_0)\left(\delta+\frac{k}{2}\right)^{a-1}
 &\qquad  \text{if}~~ a<1
\end{array}
\right.
\end{equation}
where $g(x)=Z\left[ x^H +K^H \right]^{\frac{a(\varepsilon-1)}{H}}e^{\frac{-x}{b}}$.

Now, in order to calculate the bounds of $\frac{P_{\infty}(x)}{p_k}$, we divide the expression (\ref{eq:Pinf_bound}) by $p_k$ to obtain $A(\delta,k) \le \frac{P_{\infty}(x)}{p_k} \le B(\delta,k)$ with the functions $A$ and $B$ being,
\begin{equation*}
A(\delta,k):=\left\lbrace \begin{array}{lr}
\left(\dfrac{(\delta+\frac{k+1}{2})^H +K^H}{(\delta+\frac{k}{2})^H +K^H}\right)^{\frac{a(\varepsilon-1)}{H}}e^{\frac{-1}{2b}} & \qquad  \text{if}~~ a\ge 1\\
&\\
\left(\dfrac{(\delta+\frac{k+1}{2})^H +K^H}{(\delta+\frac{k}{2})^H +K^H}\right)^{\frac{a(\varepsilon-1)}{H}}e^{\frac{-1}{2b}}\left(\dfrac{2\delta+k+1}{2\delta+k}\right)^{a-1} & \qquad  \text{if}~~ a<1
\end{array}\right.
\end{equation*}
and
\begin{equation*}
B(\delta,k):=\left\lbrace \begin{array}{lr}
\left(\dfrac{2\delta+k+1}{2\delta+k}\right)^{a-1} & \qquad \text{if}~~ a > 1\\
&\\
1 & \qquad  \text{if}~~ a \le 1
\end{array}\right.
\end{equation*}
Notice that,
\begin{equation*}
\lim\limits_{k \rightarrow \infty} A(\delta,k)=e^{-\frac{1}{2b}}, \qquad \lim\limits_{k \rightarrow \infty} B(\delta,k)=1\,,
\end{equation*}
implies that $A(\delta):=\underset{k\ge 0}{\min}\left(A(\delta,k)\right)$ and $B(\delta):=\underset{k\ge 0}{\max}\left(B(\delta,k)\right)$ are well-defined and positive, leading to desired inequality \eqref{eq:Pinf_boundary}.
\end{proof}

Note that inequality (\ref{eq:Pinf_boundary}) can be directly checked for the simplest open loop case, whose stationary solution is given by (\ref{invlapfri}).

\begin{lemma}\label{lemma:ine_aux1}
Let us define 	
\begin{equation}\label{eq:def_Mj}
	M_j:=\sum_{k=1}^{j-1}\dfrac{1}{m_k},
\end{equation} 
with $\{m_k\}_{k\ge 1}$ a positive sequence given by $m_k= p_k e^{\frac{\delta+\frac{k}{2}}{2b}}$. Then, there exists $C>0$ such that
\begin{equation}\label{xxx}
	m_k \sum_{j=k+1}^{\infty} M_j p_j \le C p_k \,,\qquad \mbox{for all } k\in\mathbb{N}\,.
\end{equation}
\end{lemma}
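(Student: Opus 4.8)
The plan is to reduce everything to one uniform estimate, namely that $M_j\, m_j \le C_1$ for all $j\ge 1$ and some constant $C_1=C_1(\delta)$, after which \eqref{xxx} follows from an elementary geometric-series computation. Throughout write $x_k := \delta+\tfrac{k}{2}$, so $x_{k+1}-x_k=\tfrac12$, and factor the stationary solution \eqref{eq:analytic_sol} as $P_\infty(x)=Z\,\phi(x)\,e^{-x/b}$ with algebraic part $\phi(x):=[x^H+K^H]^{a(\varepsilon-1)/H}\,x^{a-1}$. Then $p_k=Z\phi(x_k)e^{-x_k/b}$ and $m_k=p_k e^{x_k/(2b)}=Z\phi(x_k)e^{-x_k/(2b)}$; in particular $p_k/m_k=e^{-x_k/(2b)}$ exactly, a fact I will use twice.

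The first and main step is to show that the algebraic factor grows at most polynomially: for $1\le k\le j$ one has $\phi(x_j)/\phi(x_k)\le (x_j/x_k)^{N}$ with $N:=a(1-\varepsilon)+|a-1|$. This comes from a bound on the logarithmic derivative: a direct computation gives
\begin{equation*}
\Bigl|\frac{\phi'(x)}{\phi(x)}\Bigr| = \frac1x\Bigl| a(\varepsilon-1)\,\frac{x^H}{x^H+K^H} + (a-1)\Bigr| \le \frac{N}{x},\qquad x>0,
\end{equation*}
using $0\le x^H/(x^H+K^H)\le 1$; integrating $|(\log\phi)'|$ from $x_k$ to $x_j$ gives the claim. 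Consequently, since $x_j/x_k = 1+\tfrac{(j-k)/2}{\delta+k/2}\le 1+\tfrac{j-k}{2\delta+1}$ for $k\ge 1$,
\begin{equation*}
\frac{m_j}{m_k} = \frac{\phi(x_j)}{\phi(x_k)}\,e^{-(x_j-x_k)/(2b)} \le \Bigl(\frac{x_j}{x_k}\Bigr)^{N} e^{-(j-k)/(4b)} \le \Bigl(1+\frac{j-k}{2\delta+1}\Bigr)^{N} e^{-(j-k)/(4b)}.
\end{equation*}
Summing over $\ell:=j-k\ge 1$ yields $M_j m_j=\sum_{k=1}^{j-1} m_j/m_k \le \sum_{\ell\ge 1}\bigl(1+\tfrac{\ell}{2\delta+1}\bigr)^{N}e^{-\ell/(4b)}=:C_1<\infty$, uniformly in $j$.

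With $M_j\le C_1/m_j$ established, the conclusion is immediate: using $p_j/m_j=e^{-x_j/(2b)}$ and then summing the geometric series,
\begin{equation*}
m_k\sum_{j=k+1}^{\infty} M_j p_j \le C_1\, m_k \sum_{j=k+1}^{\infty} \frac{p_j}{m_j} = C_1\, m_k \sum_{j=k+1}^{\infty} e^{-x_j/(2b)} = \frac{C_1}{1-e^{-1/(4b)}}\, m_k\, e^{-x_{k+1}/(2b)},
\end{equation*}
and since $m_k e^{-x_{k+1}/(2b)} = p_k e^{(x_k-x_{k+1})/(2b)} = p_k e^{-1/(4b)}$, inequality \eqref{xxx} holds with $C:=C_1 e^{-1/(4b)}/(1-e^{-1/(4b)})$. (This also shows the series $\sum_j M_j p_j$ converges, as $M_j p_j\le C_1 e^{-x_j/(2b)}$.)

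The only genuine obstacle is the first step: a priori $x_j/x_k$ is unbounded when $k$ is small and $j$ large, so one must see that the algebraic blow-up is polynomial in $j-k$ rather than in $j$, which the geometric factor $e^{-(j-k)/(4b)}$ then absorbs. Everything else is routine. An alternative to the logarithmic-derivative bound would be to invoke the asymptotics of $P_\infty$ recalled in Section~\ref{sec:equilibrium_prop}, which give $\phi(x)\asymp x^{\alpha}$ on $[x_1,\infty)$ with $\alpha=a\varepsilon-1$ if $H>0$ and $\alpha=a-1$ if $H<0$; but the derivative estimate is uniform and avoids any case split.
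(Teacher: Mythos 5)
Your proof is correct, and it reaches the key intermediate bound by a different device than the paper. Both arguments hinge on the same reduction, namely the uniform estimate $M_j m_j \le C$ (equivalently $M_j \le C/m_j$), followed by the same geometric-series computation using $p_j/m_j = e^{-(\delta+j/2)/(2b)}$. The paper obtains $M_j \le C_0/m_j$ by applying the Stolz--Ces\`aro theorem to $a_j = 1/m_j$, computing $\lim_j (a_{j+1}-a_j)/(M_{j+1}-M_j) = e^{1/(4b)}-1$; this is short but yields a non-explicit constant and requires checking that $\{M_j\}$ is increasing and divergent. You instead bound the ratio $m_j/m_k$ directly, controlling the algebraic factor of $P_\infty$ through the logarithmic-derivative estimate $|\phi'/\phi| \le N/x$ with $N = a(1-\varepsilon)+|a-1|$, so that $m_j/m_k \le \bigl(1+\tfrac{j-k}{2\delta+1}\bigr)^N e^{-(j-k)/(4b)}$ and $M_j m_j \le \sum_{\ell \ge 1}\bigl(1+\tfrac{\ell}{2\delta+1}\bigr)^N e^{-\ell/(4b)} < \infty$ uniformly in $j$; the polynomial growth of the algebraic part in $j-k$ is absorbed by the geometric factor, which is exactly the point that makes the estimate work even when $x_j/x_k$ is large. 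Your derivative bound is also uniform in the sign of $H$, so no case split is needed, and it produces fully explicit constants, which is in the spirit of the paper's claim that the rate $\lambda$ in Theorem \ref{thm:exp} can be estimated explicitly. Your restriction to $k \ge 1$ in the bound $x_j/x_k \le 1+\tfrac{j-k}{2\delta+1}$ is harmless since $M_j$ in \eqref{eq:def_Mj} only involves $k \ge 1$ and the claim \eqref{xxx} is stated for $k \in \mathbb{N}$.
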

\begin{proof}
We define $\{a_j\}_{j\ge 1}$ with $a_j=\frac{1}{m_j}$ to calculate the following limit
\begin{equation*}
\lim\limits_{j \rightarrow \infty} \dfrac{a_{j+1}-a_j}{M_{j+1}-M_j}=\lim\limits_{j \rightarrow \infty} \left(\dfrac{\left(\left(\delta + \frac{j+1}{2}\right)^H +K^H\right)^{\frac{a(1-\varepsilon)}{H}}\left(\delta+\frac{j+1}{2}\right)^{1-a}}{\left(\left(\delta + \frac{j}{2}\right)^H +K^H\right)^{\frac{a(1-\varepsilon)}{H}}\left(\delta+\frac{j}{2}\right)^{1-a}}e^{\frac{1}{4b}} -1\right)=e^{\frac{1}{4b}} -1.
\end{equation*}
Since this limit exists and $\{M_j\}_{j\ge 1}$ is a strictly increasing and divergent sequence, we can use the Stolz-Cesàro theorem to obtain that $M_j\le C_0 a_j$, with $C_0>0$ constant.
Then,
$$m_k \sum_{j=k+1}^{\infty} M_j p_j \le C_{0}m_k \sum_{j=k+1}^{\infty} a_j p_j.$$
The summation term at the right hand side can be calculated as follows
\begin{equation*}
\sum_{j=k+1}^{\infty} a_j p_j= \sum_{j=k+1}^{\infty} e^{-\frac{2\delta+j}{4b}} =\dfrac{e^{-\frac{2b-1}{4b}}}{e-1}e^{-\frac{2\delta+k}{4b}},
\end{equation*}
so that $$m_k \sum_{j=k+1}^{\infty} M_j p_j \le C m_k e^{-\frac{2\delta+k}{4b}}= C p_k,$$ with $C=C_0 \frac{e^{-\frac{2b-1}{4b}}}{e-1}$, concluding the proof.
\end{proof}

In order to prove the exponential convergence of the Friedman equation (\ref{eq:Fried_good}) we are going to split the proof of inequality (\ref{ineq:expGRE_D2}) in the following two propositions. 

\begin{proposition}\label{prop:exp_step1}
There exists $\lambda>0$ such that
	\begin{equation}\label{eq:prop_cnpi}
	\lambda \mathcal{H}_2(u) \le \int_{0}^{\infty}\int_{y}^{y+1}P_{\infty}(y)\left( u(x) - u(y) \right)^2 \mathrm{d} x \mathrm{d} y  :=D(u),
	\end{equation}
with $u=p/P_\infty$, for all $p \in L^1((0,+\infty)) \cap L^2((0,+\infty), P_\infty^{-1})$.
\end{proposition}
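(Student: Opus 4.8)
The target is a Poincar\'e-type inequality relating the \emph{global} Dirichlet form $\mathcal H_2(u)=\int_0^\infty\int_y^\infty P_\infty(x)P_\infty(y)(u(x)-u(y))^2\,\d x\,\d y$ to the \emph{local} one $D(u)$, in which the two points are forced within distance $1$. The plan is to discretise $(0,+\infty)$ into the length-$\tfrac12$ intervals $I_{k,\delta}$ of Lemma~\ref{lema:Pinf_bound} (for a small $\delta>0$), freeze $P_\infty$ to the constant $p_k$ on each $I_{k,\delta}$ via~\eqref{eq:Pinf_boundary}, and reconstruct $u(x)-u(y)$ for far-apart $x,y$ from the jumps of $u$ across neighbouring intervals. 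I work with the interval averages $\bar u_k:=\tfrac{1}{|I_{k,\delta}|}\int_{I_{k,\delta}}u$ and the local energies
\[
E_k:=\int_{I_{k,\delta}}\!\int_{I_{k+1,\delta}}(u(x)-u(y))^2\,\d x\,\d y,
\qquad
E_k^{\mathrm{in}}:=\int_{I_{k,\delta}}\!\int_{I_{k,\delta}}(u(x)-u(y))^2\,\d x\,\d y .
\]
The first step is the lower bound $D(u)\ge c(\delta)\sum_k p_k(E_k+E_k^{\mathrm{in}})$: for $y\in I_{k,\delta}$ the interval $(y,y+1)$ contains all of $I_{k+1,\delta}$ and the part of $I_{k,\delta}$ to the right of $y$, and $P_\infty(y)\ge A(\delta)p_k$ there, so restricting the inner integral in $D(u)$ to these sets and using $x\leftrightarrow y$ symmetry on the within-interval part gives this with $c(\delta)=\tfrac12A(\delta)$. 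By Lemma~\ref{l:D2eqH2} it then suffices to bound $\mathcal H_2(u)$ by a constant times $\sum_k p_k(E_k+E_k^{\mathrm{in}})$.

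For the upper bound I split $\mathcal H_2(u)$ according to which intervals contain $x$ and $y$, using $P_\infty\le B(\delta)p_k$ on $I_{k,\delta}$ and $\sup_kp_k<\infty$. The ``near'' part ($x,y$ in the same or consecutive intervals) is at once bounded by $C\sum_k p_k(E_k+E_k^{\mathrm{in}})$. For a ``far'' pair $y\in I_{k,\delta}$, $x\in I_{j,\delta}$ with $j\ge k+2$, I use the telescoping identity
\[
u(x)-u(y)=(u(x)-\bar u_j)+(\bar u_k-u(y))+\sum_{\ell=k}^{j-1}(\bar u_{\ell+1}-\bar u_\ell),
\]
square it at the cost of a factor $3$, and apply a \emph{weighted} Cauchy--Schwarz to the last sum with the weights $m_\ell=p_\ell e^{(\delta+\ell/2)/(2b)}$ of Lemma~\ref{lemma:ine_aux1}, which produces the factor $M_j=\sum_{\ell=1}^{j-1}m_\ell^{-1}$ in front of $\sum_\ell m_\ell(\bar u_{\ell+1}-\bar u_\ell)^2$. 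With the Jensen-type estimates $(\bar u_{\ell+1}-\bar u_\ell)^2\le 4E_\ell$ and $\int_{I_{j,\delta}}(u-\bar u_j)^2=E_j^{\mathrm{in}}$, integrating in $x,y$ and summing over the admissible $k,j$ bounds the far part by $C\sum_k\sum_{j\ge k+2}p_jp_k\bigl(E_j^{\mathrm{in}}+E_k^{\mathrm{in}}+M_j\sum_{\ell=k}^{j-1}m_\ell E_\ell\bigr)$. Exchanging the order of summation, the first two terms collapse to $S\sum_jp_jE_j^{\mathrm{in}}$ and $S\sum_kp_kE_k^{\mathrm{in}}$ using only $S:=\sum_kp_k<\infty$ (a Riemann-sum comparison for $\int_0^\infty P_\infty$, via~\eqref{eq:Pinf_boundary}); in the third term, summing first over $k$ (again $\le S$) and then over $j$ leaves $CS\sum_\ell m_\ell E_\ell\bigl(\sum_{j\ge\ell+1}M_jp_j\bigr)$, and Lemma~\ref{lemma:ine_aux1} supplies exactly $m_\ell\sum_{j\ge\ell+1}M_jp_j\le Cp_\ell$. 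Hence the far part is $\le C'\sum_\ell p_\ell E_\ell$, and combining with the near part and the first step yields $\mathcal H_2(u)\le\lambda^{-1}D(u)$.

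The crux — and the reason the somewhat elaborate Lemmas~\ref{lema:Pinf_bound} and~\ref{lemma:ine_aux1} are needed — is the choice of weights in the chaining step: a naive \emph{unweighted} Cauchy--Schwarz on the $(j-k)$ telescoping terms would replace $M_j$ by $j-k$ and give a bound of the shape $C\sum_\ell E_\ell$, which is useless since it is not controlled by $\sum_\ell p_\ell E_\ell$ (recall $p_\ell\to0$). The exponential weights $m_\ell$ are calibrated so that the growth of $M_j$ (of order $e^{j/(4b)}$) is exactly balanced by the decay of $p_j$ (of order $e^{-j/(2b)}$), making $m_\ell\sum_{j>\ell}M_jp_j$ comparable to $p_\ell$; verifying this comparison — done via Stolz--Ces\`aro in Lemma~\ref{lemma:ine_aux1} — is the technical heart. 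A secondary point is the boundary layer $(0,\delta]$, which the $I_{k,\delta}$ do not cover: there I compare $u$ with its average over the adjacent interval $I_{0,\delta}$ (a comparison $D(u)$ controls directly, since those sets lie within distance $1$) and then take $\delta$ small so that the leftover term $\bigl(\int_0^\delta P_\infty\bigr)\mathcal H_2(u)$ is absorbed into the left-hand side.
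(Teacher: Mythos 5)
Your proposal is correct and follows essentially the same route as the paper's proof: the same decomposition into the length-$\tfrac12$ intervals $I_{k,\delta}$ with the comparison constants $A(\delta),B(\delta)$ of Lemma \ref{lema:Pinf_bound}, the same weighted chaining across consecutive intervals with the weights $m_\ell$ and the summation estimate of Lemma \ref{lemma:ine_aux1}, and a separate treatment of the region $(0,\delta)$ via an intermediate comparison controlled by $D(u)$; telescoping through interval averages $\bar u_\ell$ instead of the paper's averaged dummy variables is an equivalent cosmetic variation. The only point where you diverge is the boundary layer, which you absorb as a small multiple of $\mathcal{H}_2(u)$ — legitimate since $p\in L^2((0,+\infty),P_\infty^{-1})$ makes $\mathcal{H}_2(u)$ finite, but it requires checking that the actual coefficient $\bigl(\int_0^\delta P_\infty\bigr)/\inf_{I_{0,\delta}}P_\infty$ (not just $\int_0^\delta P_\infty$) tends to $0$ as $\delta\to 0$, which does hold by the behaviour of $P_\infty$ at the origin described in Section \ref{sec:equilibrium_prop}, whereas the paper avoids this by bounding that region by a multiple of $\mathcal{H}_{21}(u)$, already controlled by $D(u)$.
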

\begin{proof}
We take $0<\delta<1$ and split $\mathcal{H}_2(u)$ in two parts
\begin{align*}
 \mathcal{H}_2(u)=&\int_{\delta}^{\infty}\int_{y}^{\infty}P_{\infty}(x)P_{\infty}(y)\left( u(x) - u(y) \right)^2 \mathrm{d} x \mathrm{d} y \\ &+\int_{0}^{\delta}\int_{y}^{\infty}P_{\infty}(x)P_{\infty}(y)\left( u(x) - u(y) \right)^2 \mathrm{d} x \mathrm{d} y := \mathcal{H}_{21}(u)+\mathcal{H}_{22}(u)\,.
\end{align*}
For $i,j\ge 0$ integers we define
\begin{equation*}
A_{i,j}:=\int_{I_{i,\delta}}\int_{I_{j,\delta}}\left(u(x)-u(y)\right)^2 \mathrm{d} y \mathrm{d} x =\int_{I_{i,\delta}}\int_{I_{j,\delta}}\left(u(x)-u(y)\right)^2 \mathrm{d} x \mathrm{d} y.
\end{equation*}
We can estimate both the left and the right-hand sides of (\ref{eq:prop_cnpi}) by using the quantities $A_{i,j}$. 

{\bf Step 1:}  $\mathcal{H}_{21}(u)$ bound.- We start working on the term $\mathcal{H}_{21}(u)(\tau)$, where $0<\delta<y<x$. By swapping $(x,y)$ in the domain of integration, we get
\begin{align*}
\mathcal{H}_{21}(u)=&\int_{\delta}^{\infty}\int_{\delta}^{x}P_{\infty}(x)P_{\infty}(y)\left( u(x) - u(y) \right)^2 \mathrm{d} y \mathrm{d} x \\
 \le& \sum_{i=0}^{\infty}\sum_{j=0}^{i}\int_{I_{i,\delta}}\int_{I_{j,\delta}}\left(u(x)-u(y)\right)^2P_{\infty}(x)P_{\infty}(y) \mathrm{d} y \mathrm{d} x.
\end{align*}
Now, using the inequality (\ref{eq:Pinf_boundary}) and the symmetry $A_{i,j}=A_{j,i}$, we obtain
\begin{align}
\mathcal{H}_{21}(u) &\le B(\delta) ^2 \sum_{i=0}^{\infty}\sum_{j=0}^{i} p_i p_j\int_{I_{i,\delta}}\int_{I_{j,\delta}}\!\!\!\!\left(u(x)-u(y)\right)^2 \mathrm{d} y \mathrm{d} x
\nonumber\\
&= B(\delta) ^2 \sum_{i=0}^{\infty}\sum_{j=0}^{i} p_i p_j A_{i,j}= B(\delta) ^2 \sum_{j=0}^{\infty}\sum_{i=j}^{\infty} p_i p_j A_{i,j}= B(\delta) ^2 \sum_{i=0}^{\infty}\sum_{j=i}^{\infty} p_i p_j A_{i,j}.
\label{eq:proof_step1}
\end{align}

Note that some terms in this expression already appear in the right hand side of (\ref{eq:prop_cnpi}), since:
\begin{align}\label{eq:proof_step2}
\sum_{i=0}^{\infty} p_i^2 A_{i,i}=&\, \sum_{i=0}^{\infty} p_i p_i \int_{I_{i,\delta}}\int_{I_{i,\delta}}\left(u(x)-u(y)\right)^2 \mathrm{d} x \mathrm{d} y \nonumber \\
\le&\, \dfrac{1}{A(\delta)^2}\sum_{i=0}^{\infty} \int_{I_{i,\delta}}\int_{I_{i,\delta}}\left(u(x)-u(y)\right)^2 P_{\infty}(x)P_{\infty}(y) \mathrm{d} x \mathrm{d} y \nonumber \\
=&\, \dfrac{2}{A(\delta)^2}\sum_{i=0}^{\infty} \int_{I_{i,\delta}}\int_{\underset{x>y}{x \in I_{i,\delta}}}\left(u(x)-u(y)\right)^2 P_{\infty}(x)P_{\infty}(y) \mathrm{d} x \mathrm{d} y \nonumber \\
\le&\, \dfrac{2}{A(\delta)^2}\sum_{i=0}^{\infty} \int_{I_{i,\delta}}\int_{y}^{y+1}\left(u(x)-u(y)\right)^2 P_{\infty}(x)P_{\infty}(y) \mathrm{d} x \mathrm{d} y \nonumber \\
=&\, \dfrac{2}{A(\delta)^2} \int_{\delta}^{\infty}\int_{y}^{y+1}\left(u(x)-u(y)\right)^2 P_{\infty}(x)P_{\infty}(y) \mathrm{d} x \mathrm{d} y \nonumber \\
\le &\, \dfrac{P_M}{A(\delta)^2} \int_{\delta}^{\infty} \int_{y}^{y+1}\left(u(x)-u(y)\right)^2 P_{\infty}(y) \mathrm{d} x \mathrm{d} y \le \dfrac{ P_M}{A(\delta)^2} D(u),
\end{align}
where $P_M=\underset{x \in [\delta, \ \infty)}{\max} P_{\infty}(x) < \infty$ due to the properties described in Section \ref{sec:equilibrium_prop}.

In order to estimate $A_{i,j}$ for $j>i$ we fix $i,j$ and call $n:=j-i\ge 1$. We use $n-1$ ``intermediate reactions'' to write the following: introduce $n-1$ dummy integration variables $z_{i+1}, \dots, z_{j-1}$ and denote averaged integrals with a stroke. Thus, we have: 
\begin{align*}
4A_{i,j}=&\, \dashint_{I_{i,\delta}}\dashint_{I_{j,\delta}}\left(u(x)-u(y)\right)^2 \mathrm{d} x \mathrm{d} y \\
=&\, \dashint_{I_{i,\delta}}\dashint_{I_{i+1,\delta}} \cdots \dashint_{I_{j,\delta}}\left(u(x)-u(y)\right)^2 \mathrm{d} x\, \mathrm{d} z_{j-1} \cdots \mathrm{d} z_{i+1} \, \mathrm{d} y \\
=&\, \dashint_{I_{i,\delta}}\dashint_{I_{i+1,\delta}} \cdots \dashint_{I_{j,\delta}}\left(u(z_j)-u(z_i)\right)^2 \mathrm{d} z_{j}\, \mathrm{d} z_{j-1} \cdots \mathrm{d} z_{i}, 
\end{align*}
where the last step is just renaming $x\equiv z_j$ and $y\equiv z_i$. Observe that nothing has been done in the case $j=i+1$. Using the Cauchy-Schwarz inequality and (\ref{eq:def_Mj}), we have
\begin{align*}
	4A_{i,j}=&\, \dashint_{I_{i,\delta}}\dashint_{I_{i+1,\delta}} \cdots \dashint_{I_{j,\delta}}\left(\sum_{k=i}^{j-1}(u(z_{k+1})-u(z_k))\right)^2 \mathrm{d} z_{j}\, \mathrm{d} z_{j-1} \cdots \mathrm{d} z_{i} \\
	 \le&\, \dashint_{I_{i,\delta}}\dashint_{I_{i+1,\delta}} \cdots \dashint_{I_{j,\delta}}\left(\sum_{k=i}^{j-1}(u(z_{k+1})-u(z_k))^2m_k\right)\left(\sum_{k=i}^{j-1}\dfrac{1}{m_k}\right) \mathrm{d} z_{j}\, \mathrm{d} z_{j-1} \cdots \mathrm{d} z_{i} \\
	\le&\, M_j\dashint_{I_{i,\delta}}\dashint_{I_{i+1,\delta}} \cdots \dashint_{I_{j,\delta}}\left(\sum_{k=i}^{j-1}(u(z_{k+1})-u(z_k))^2m_k\right) \mathrm{d} z_{j}\, \mathrm{d} z_{j-1} \cdots \mathrm{d} z_{i} \\
	=&\, M_j\sum_{k=i}^{j-1}m_k\dashint_{I_{i,\delta}}\dashint_{I_{i+1,\delta}} \cdots \dashint_{I_{j,\delta}}\left(u(z_{k+1})-u(z_k)\right)^2 \mathrm{d} z_{j}\, \mathrm{d} z_{j-1} \cdots \mathrm{d} z_{i}\\
	=&\, M_j\sum_{k=i}^{j-1}m_k\dashint_{I_{k,\delta}}\dashint_{I_{k+1,\delta}} \left(u(z_{k+1})-u(z_k)\right)^2 \mathrm{d} z_{k+1}\, \mathrm{d} z_{k} = 4 M_j\sum_{k=i}^{j-1}m_k A_{k,k+1}. 
\end{align*}
Hence, we deduce that
\begin{equation*}
A_{i,j} \le M_j\sum_{k=i}^{j-1}m_k A_{k,k+1} \qquad \text{for all $j>i$.}
\end{equation*}
Thus, we get
\begin{align*}
	\sum_{i=0}^{\infty}\sum_{j=i+1}^{\infty}p_i p_j A_{i,j} \le&\, \sum_{i=0}^{\infty}\sum_{j=i+1}^{\infty}p_i p_j M_j\sum_{k=i}^{j-1}m_k A_{k,k+1}\\
	=&\, \sum_{k=0}^{\infty} m_k A_{k,k+1}\sum_{j=k+1}^{\infty} p_j M_j\sum_{i=0}^{k}p_i \le C_{\delta}^1\sum_{k=0}^{\infty} A_{k,k+1} m_k \sum_{j=k+1}^{\infty} M_j p_j.
\end{align*}
The inequality $\sum_{i=0}^{k}p_i\le C$, in the previous expression, holds because $\sum_{i=0}^{\infty}p_i$ is a convergent series due to the d'Alembert's ratio test. Moreover, 
\eqref{xxx} implies
\begin{equation}\label{eq:proof_step3}
\sum_{i=0}^{\infty}\sum_{j=i+1}^{\infty}p_i p_j A_{i,j} \le C \sum_{k=0}^{\infty} A_{k,k+1} p_k \le C D(u)
\end{equation}
for a generic constant $C>0$.
We finally work in the equation (\ref{eq:proof_step3}) to obtain
\begin{align*}
\sum_{k=0}^{\infty} A_{k,k+1} p_k =&\, \sum_{k=0}^{\infty} \int_{I_{k,\delta}}\int_{I_{k+1,\delta}}\left(u(x)-u(y)\right)^2  \mathrm{d} x \, p_k \, \mathrm{d} y\\
 \le &\, \frac{1}{A(\delta)}\sum_{k=0}^{\infty} \int_{I_{k,\delta}}\int_y^{y+1}\left(u(x)-u(y)\right)^2  \mathrm{d} x P_{\infty}(y)\mathrm{d} y\\
\le &\, \frac{1}{A(\delta)} \int_0^{\infty}\int_y^{y+1}\left(u(x)-u(y)\right)^2 P_{\infty}(y) \mathrm{d} x \mathrm{d} y=\frac{1}{A(\delta)} D(u),
\end{align*}
where we use that $y<\delta + \frac{k+1}{2}<\delta + \frac{k+2}{2} < y+1$ and (\ref{eq:Pinf_boundary}). We conclude by plugging the above estimate in (\ref{eq:proof_step3}), which together with equations (\ref{eq:proof_step1}) and (\ref{eq:proof_step2}) show that
\begin{equation}\label{eq:proof_step1_H21}
\lambda_1\mathcal{H}_{21}(u) \le D(u)\,,
\end{equation} 
for some constant $\lambda_1>0$.

{\bf Step 2:}  $\mathcal{H}_{22}(u)$ bound.-
To prove that there exists $\lambda_2>0$ such that
\begin{equation*}
\lambda_2 \mathcal{H}_{22}(u) \le \int_{0}^{\infty}\int_{y}^{y+1}P_{\infty}(y)\left( u(x) - u(y) \right)^2 \mathrm{d} x \mathrm{d} y ,
\end{equation*}
we use an intermediate variable $z\in(\delta, \ 1)$ as follows:
\begin{align*}
\int_{0}^{\delta}\int_{y}^{\infty}\left( u(x) - u(y) \right)^2 P_{\infty}(x)P_{\infty}(y)\mathrm{d} x \mathrm{d} y 
=& \, \dashint_{\delta}^1\int_{0}^{\delta}\int_{y}^{\infty}\left( u(x) - u(y) \right)^2 P_{\infty}(x)P_{\infty}(y)\mathrm{d} x \, \mathrm{d} y \, \mathrm{d} z \\
\le& \, 2\dashint_{\delta}^1\int_{0}^{\delta}\int_{y}^{\infty}\left( u(x) - u(z) \right)^2 P_{\infty}(x)P_{\infty}(y)\mathrm{d} x \, \mathrm{d} y \, \mathrm{d} z \\ &+2\dashint_{\delta}^1\int_{0}^{\delta}\int_{y}^{\infty}\left( u(z) - u(y) \right)^2 P_{\infty}(x)P_{\infty}(y)\mathrm{d} x \, \mathrm{d} y \, \mathrm{d} z \\
:= & \, 2I_1 +2I_2
\end{align*}
We bound each of the terms $I_1, \ I_2$. First, for $I_1$ we deduce that
\begin{align*}
I_1=&\dashint_{\delta}^1\int_{0}^{\delta}\int_{y}^{\infty}\left( u(x) - u(z) \right)^2 P_{\infty}(x)P_{\infty}(y)\mathrm{d} x \, \mathrm{d} y \, \mathrm{d} z 
\le\, \dashint_{\delta}^1\int_{0}^{\infty}\left( u(x) - u(z) \right)^2 P_{\infty}(x) \mathrm{d} x \, \mathrm{d} z \\
=&\, \dashint_{\delta}^1\int_{\delta}^{\infty}\left( u(x) - u(z) \right)^2 P_{\infty}(x) \mathrm{d} x \, \mathrm{d} z + \dashint_{\delta}^1\int_{0}^{\delta}\left( u(x) - u(z) \right)^2 P_{\infty}(x) \mathrm{d} x \, \mathrm{d} z
 := I_{11}+I_{12},
\end{align*}
since $\int_{0}^{\infty}P_{\infty}(y)\mathrm{d} y=1$.
For $I_{11}$ we use that $P_\infty$ is bounded below on $[\delta, \ 1]$ ($\frac{1}{C_{\delta}}\le P_\infty(x), \ x \in [\delta, \ 1]$) to deduce
\begin{align*}
I_{11}= &	\dashint_{\delta}^1\int_{\delta}^{\infty}\left( u(x) - u(z) \right)^2 P_{\infty}(x) \mathrm{d} x \, \mathrm{d} z 
\le\, C_{\delta}\dashint_{\delta}^1\int_{\delta}^{\infty}\left( u(x) - u(z) \right)^2 P_{\infty}(x) P_{\infty}(z) \mathrm{d} x \, \mathrm{d} z \\
	\le&\, \dfrac{C_{\delta}}{1-\delta}\int_{\delta}^{\infty}\!\!\!\!\int_{\delta}^{\infty}\left( u(x) - u(z) \right)^2 P_{\infty}(x) P_{\infty}(z) \mathrm{d} x \, \mathrm{d} z
	 =\, \dfrac{2C_{\delta}}{1-\delta}\int_{\delta}^{\infty}\!\!\!\!\int_{z}^{\infty}\left( u(x) - u(z) \right)^2 P_{\infty}(x) P_{\infty}(z) \mathrm{d} x \, \mathrm{d} z,
\end{align*}
Note that the right hand side of the above equation is bounded by a multiple of the term $\mathcal{H}_{21}(u)$, thus leading to $I_{11}\le C\mathcal{H}_{21}(u)$ with $C=\dfrac{2C_{\delta}}{1-\delta}$. Using (\ref{eq:proof_step1_H21}) we deduce that $I_{11}\le CD(u)$.

The integral $I_{12}$ is clearly smaller than the right hand side of (\ref{eq:prop_cnpi}) since it involves a smaller domain of integration, indeed we obtain
\begin{align*}
I_{12}=&\,	\dashint_{\delta}^1\int_{0}^{\delta}\left( u(x) - u(z) \right)^2 P_{\infty}(x) \mathrm{d} x \, \mathrm{d} z
=\, \dashint_{\delta}^1\int_{0}^{\delta}\left( u(x) - u(z) \right)^2 P_{\infty}(z) \mathrm{d} z \, \mathrm{d} x \\
=&\, \dfrac{1}{1-\delta} \int_{0}^{\delta}\int_{\delta}^{1}\left( u(x) - u(z) \right)^2 P_{\infty}(z) \mathrm{d} x \, \mathrm{d} z 
\le\dfrac{1}{1-\delta}\int_{0}^{\delta}\int_{z}^{z+1}\left( u(x) - u(z) \right)^2 P_{\infty}(z) \mathrm{d} x \, \mathrm{d} z 
\le CD(u)\,,
\end{align*}
since $z<\delta<x<1<z+1$.
For $I_2(\tau)$, notice that
\begin{align*}
I_2 =&\, \dashint_{\delta}^1\int_{0}^{\delta}\left( u(z) - u(y) \right)^2 P_{\infty}(y)\left(\int_{y}^{\infty}P_{\infty}(x)\mathrm{d} x \right) \mathrm{d} y \, \mathrm{d} z 
\le \, \dashint_{\delta}^1\int_{0}^{\delta}\left( u(z) - u(y) \right)^2 P_{\infty}(y) \mathrm{d} y \, \mathrm{d} z = I_{12},
\end{align*}
and thus, we also deduce that $I_2\le CD(u)$. Putting together the estimates on $I_{11}$, $I_{12}$ and $I_2$, we conclude that
\begin{equation}\label{eq:proof_step2_H22}
\lambda_2\mathcal{H}_{22}(u) \le D(u)\,,
\end{equation}
for some $\lambda_2>0$. Finally, inequalities (\ref{eq:proof_step1_H21}) and (\ref{eq:proof_step2_H22}) together imply that $\lambda\mathcal{H}_{2}(u) \le D(u)$ concluding the proof.
\end{proof}

\begin{proposition}\label{prop:exp_step2}
	There exists $\alpha>0$ such that
	\begin{equation}\label{eq:prop_exp_conv}
	\alpha \mathcal{D}(u)\le \mathcal{D}_2(u)\,.
	\end{equation}
with $u=p/P_\infty$, for all $p \in L^1((0,+\infty)) \cap L^2((0,+\infty), P_\infty^{-1})$.
\end{proposition}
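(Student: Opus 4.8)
The plan is short: bound $\mathcal{D}_2(u)$ from below by discarding all of its domain of integration except the unit strip $\{0<y<x<y+1\}$, on which the burst kernel $\omega$ and the input function $c$ are each bounded below by explicit positive constants, and then recognize what remains as a constant multiple of $\mathcal{D}(u)$.

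First I would observe that the integrand defining
\[
\mathcal{D}_2(u) = a\int_{0}^{\infty}\int_{y}^{\infty}\omega(x-y)\bigl( u(x) - u(y) \bigr)^2 c(y)P_\infty(y)\,\mathrm{d}x\,\mathrm{d}y
\]
is pointwise nonnegative, so restricting the inner integral from $(y,+\infty)$ to $(y,y+1)$ can only decrease it. On that strip $0<x-y<1$, hence $\omega(x-y)=\tfrac1b e^{-(x-y)/b}\ge \tfrac1b e^{-1/b}$; and by the explicit form \eqref{eq:c-def} the image of $c$ is contained in $[\varepsilon,1]$, so $c(y)\ge\varepsilon>0$ for every $y>0$. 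Substituting these two lower bounds gives
\[
\mathcal{D}_2(u)\ \ge\ \frac{a\varepsilon\, e^{-1/b}}{b}\int_{0}^{\infty}\int_{y}^{y+1}\bigl( u(x) - u(y) \bigr)^2 P_\infty(y)\,\mathrm{d}x\,\mathrm{d}y\ =\ \frac{a\varepsilon\, e^{-1/b}}{b}\,\mathcal{D}(u),
\]
so \eqref{eq:prop_exp_conv} holds with $\alpha:=a\varepsilon e^{-1/b}/b$.

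There is no genuine obstacle in this step; the argument is entirely elementary once the strip decomposition has been set up in Proposition \ref{prop:exp_step1}. The only point worth stressing is that throwing away the tail $x>y+1$ and replacing $\omega(x-y)$ by its minimum over the unit strip is extremely wasteful, which is why — once combined with Proposition \ref{prop:exp_step1} and Lemma \ref{l:D2eqH2} to close the entropy–entropy production inequality \eqref{ineq:expGRE_D2} — the resulting decay rate in Theorem \ref{thm:exp} is far from optimal, as already flagged in the remark following that theorem. If a sharper constant were desired one could instead keep a wider strip $0<y<x<y+R$ and optimize over $R$, but this would not alter the structure of the argument.
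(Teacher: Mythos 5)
Your proof is correct and follows essentially the same argument as the paper: both restrict to the unit strip $y<x<y+1$, use the monotonicity of $\omega$ to bound it below by $\tfrac1b e^{-1/b}$ and $c\ge\varepsilon$, and obtain the same constant $\alpha = a\varepsilon e^{-1/b}/b$ (the paper just writes the chain of inequalities in the reverse direction, bounding $D(u)$ above by $\tfrac{b}{a\varepsilon}e^{1/b}\mathcal{D}_2(u)$). Nothing is missing.
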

\begin{proof}
Note that, $y<x<y+1$ on the left hand side of (\ref{eq:prop_exp_conv}). Thus, we can bound the term $\omega(x-y)$ with $x \in [y, \ y+1]$. Since $\omega(x)$ is a decreasing function of $x$, then
\begin{equation*}
\omega (1)=\frac{1}{b} e ^{\frac{-1}{b}}\le \omega(x-y) \le \frac{1}{b}=\omega (0) \qquad \text{with $x \in [y, \ y+1]$ and $y \in \mathbb{R}^+$}.
\end{equation*}
Moreover, the term $c(x)$ is bounded, $\varepsilon \le c(x) \le 1 $ for all $x \in \mathbb{R}^+$. So that:
\begin{align*}
\int_{0}^{\infty}\int_{y}^{y+1}P_{\infty}(y) \left( u(x) - u(y) \right)^2 \mathrm{d} x \mathrm{d} y 
\le&\, \frac{b}{\varepsilon}e ^{\frac{1}{b}}\int_{0}^{\infty}\int_{y}^{y+1}\omega(x-y)c(y)P_{\infty}(y)\left( u(x) - u(y) \right)^2 \mathrm{d} x \mathrm{d} y  \\
  \le&\, \frac{b}{\varepsilon}e ^{\frac{1}{b}}\int_{0}^{\infty}\int_{y}^{\infty}\omega(x-y)c(y)P_{\infty}(y)\left( u(x) - u(y) \right)^2 \mathrm{d} x \mathrm{d} y\\
   =&\, \frac{b}{a\varepsilon}e ^{\frac{1}{b}} \mathcal{D}_2(u),
\end{align*} 
which proves the inequality (\ref{eq:prop_exp_conv}).
\end{proof}

\begin{proof}[Proof of Theorem \ref{thm:exp}]
Putting together \eqref{eq:prop_cnpi} and \eqref{eq:prop_exp_conv} from Propositions \ref{prop:exp_step1} and \ref{prop:exp_step2}, we deduce that the entropy-entropy production inequality \eqref{ineq:expGRE_D2} holds. Lemma \ref{l:D2eqH2} together with \eqref{ineq:expGRE_D2} finally implies \eqref{ineq:expGRE}.  
As consequence, we deduce the exponential convergence towards $P_{\infty}$ for all mild solutions of \eqref{eq:Fried_good}.
\end{proof}


\subsection{Numerical illustration of exponential convergence}

The entropy functional, $\mathcal{G}_2(u)(t)$, is represented in the plots B of Figures \ref{fig:case1}-\ref{fig:case5}, which address the five possible steady states plots A of Figures \ref{fig:case1}-\ref{fig:case5} (see also Figure \ref{fig:distribution_shapes}). For all cases, these functions are represented in a semi-logarithm scale to numerically validate the exponential convergence shown in the previous section.

\begin{figure}[H]
	\centering
	\begin{minipage}[t]{0.49\linewidth}
		\centering
		\textbf{A}\\
		\includegraphics[width=1\textwidth]{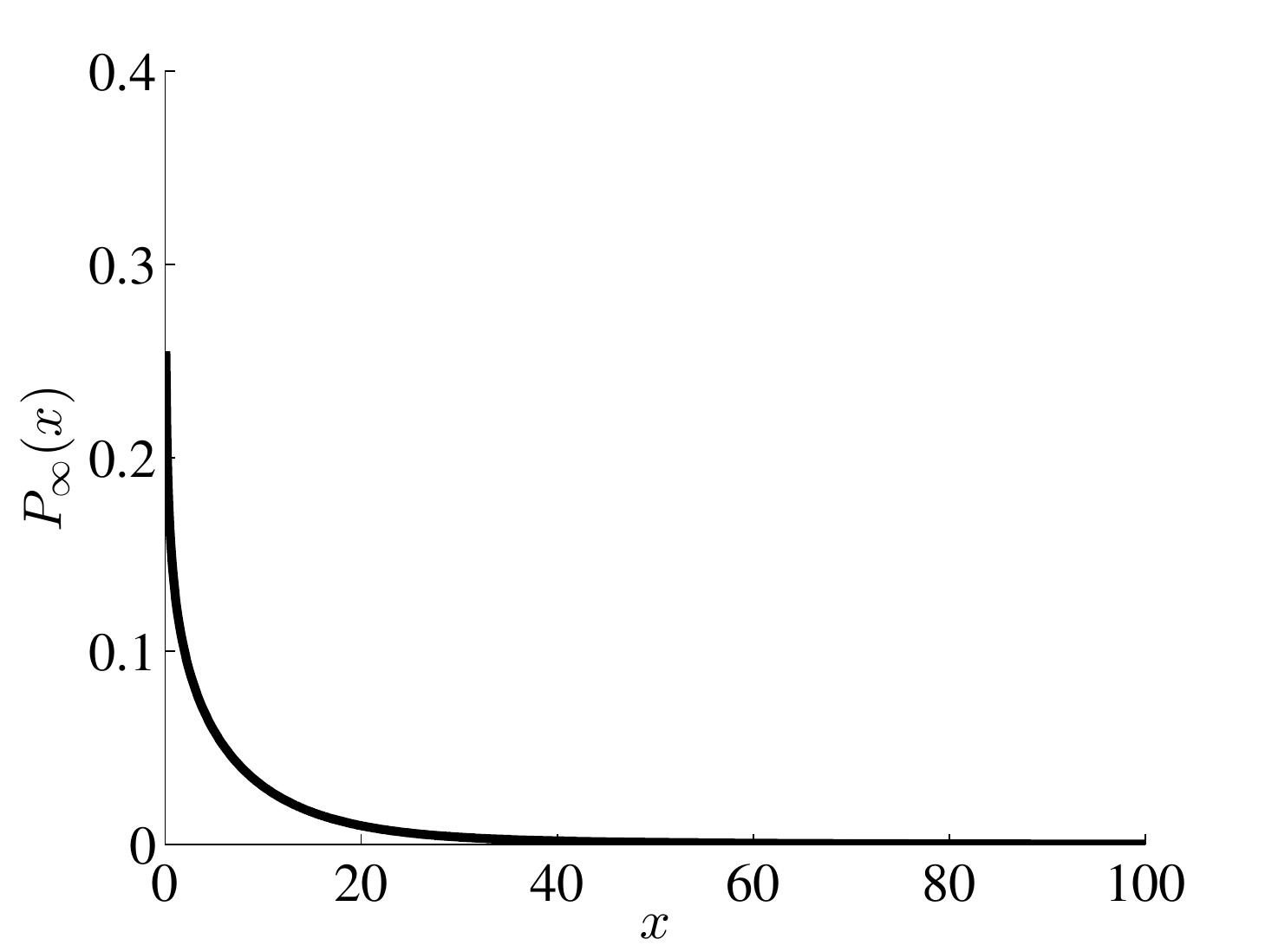}
	\end{minipage}
	\begin{minipage}[t]{0.49\linewidth}
		\centering
		\textbf{B}\\
		\includegraphics[width=1\textwidth]{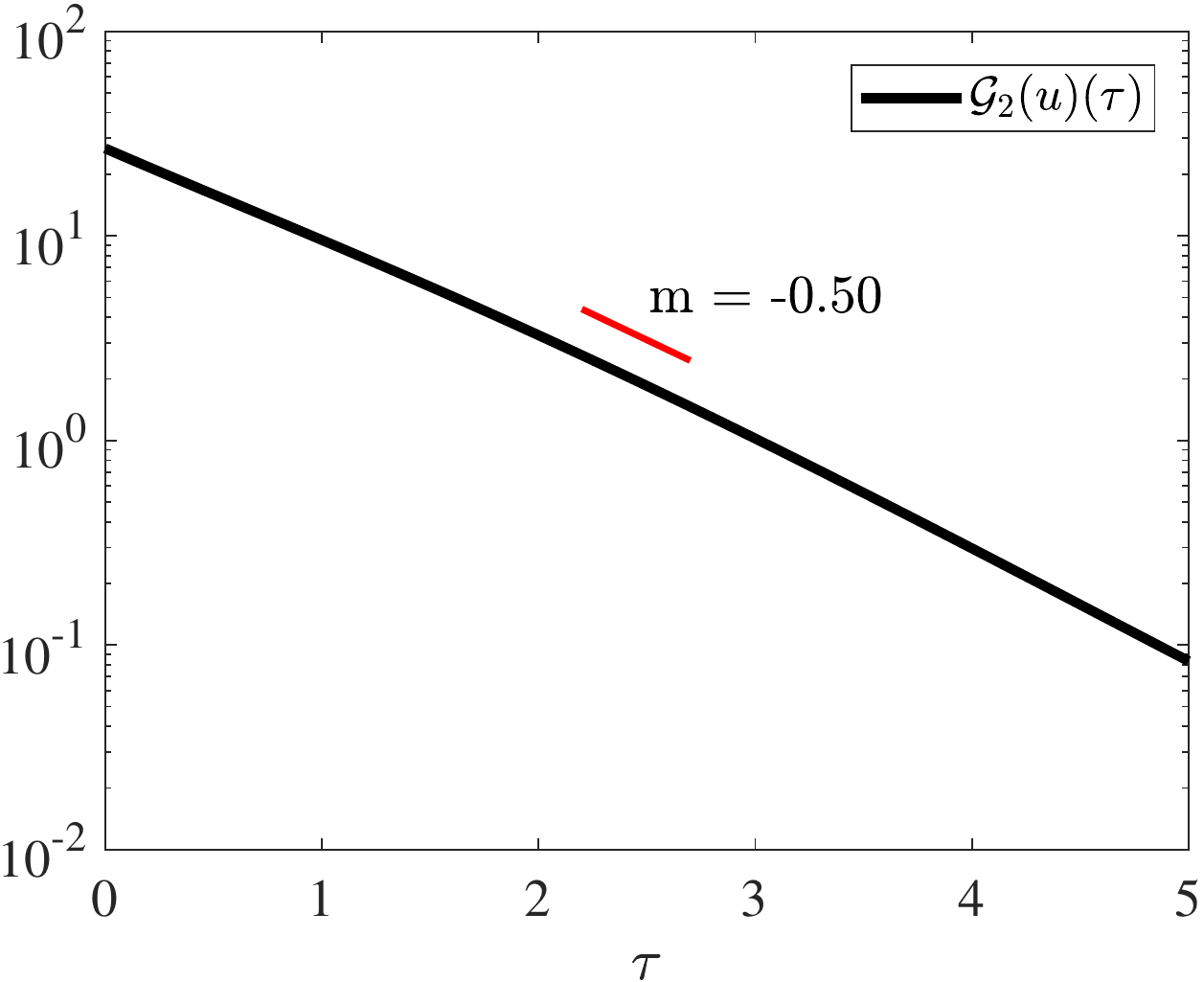}
	\end{minipage}
	\caption{Case 1 Fig \ref{fig:distribution_shapes}: $H=-4, \ \varepsilon=0.15, \ K=45, \ a=5, \ b=10$.}
	\label{fig:case1}
\end{figure}
\begin{figure}[H]
	\centering
	\begin{minipage}[t]{0.49\linewidth}
		\centering
		\textbf{A}\\
		\includegraphics[width=1\textwidth]{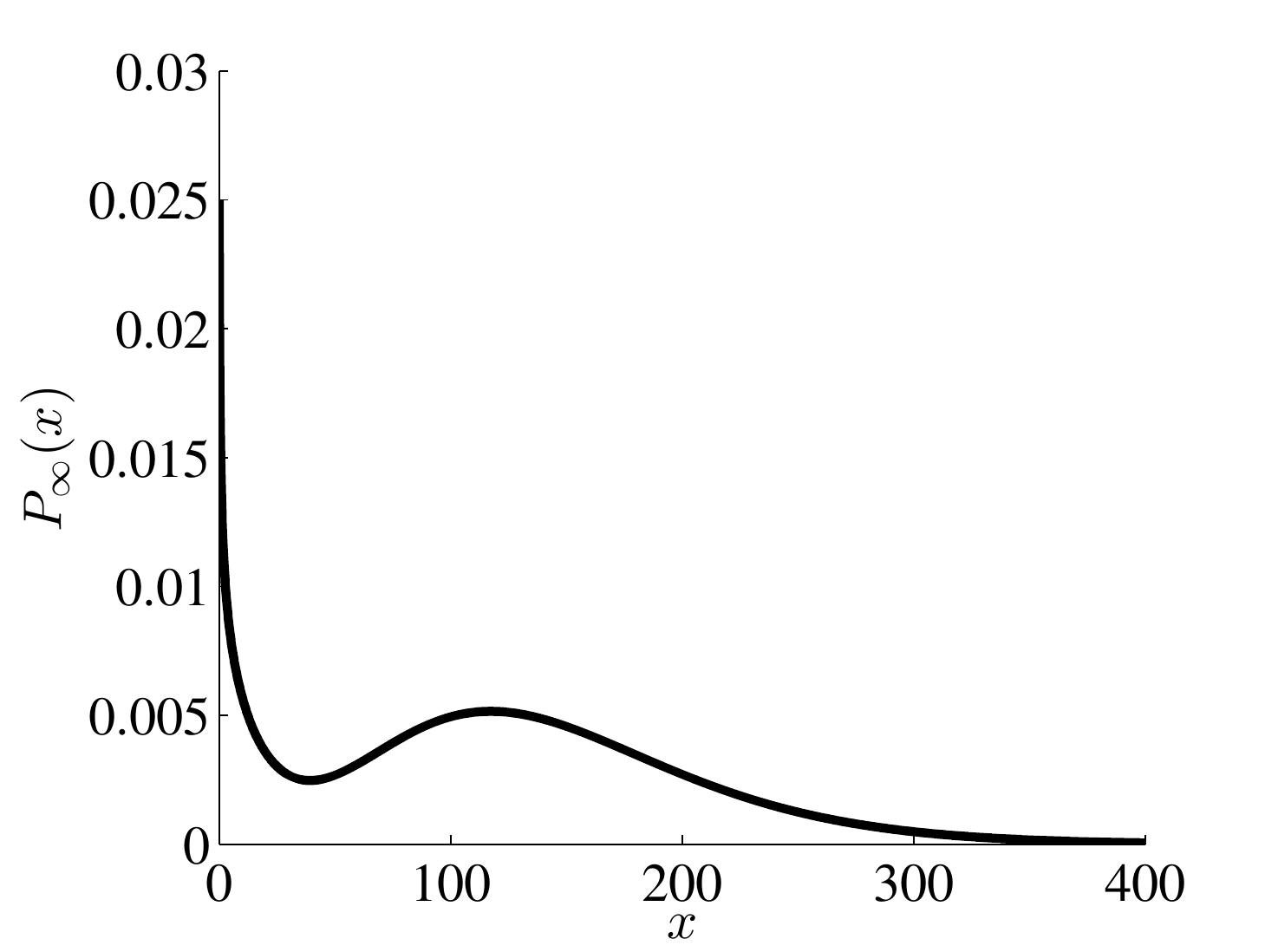}
	\end{minipage}
	\begin{minipage}[t]{0.49\linewidth}
		\centering
		\textbf{B}\\
		\includegraphics[width=1\textwidth]{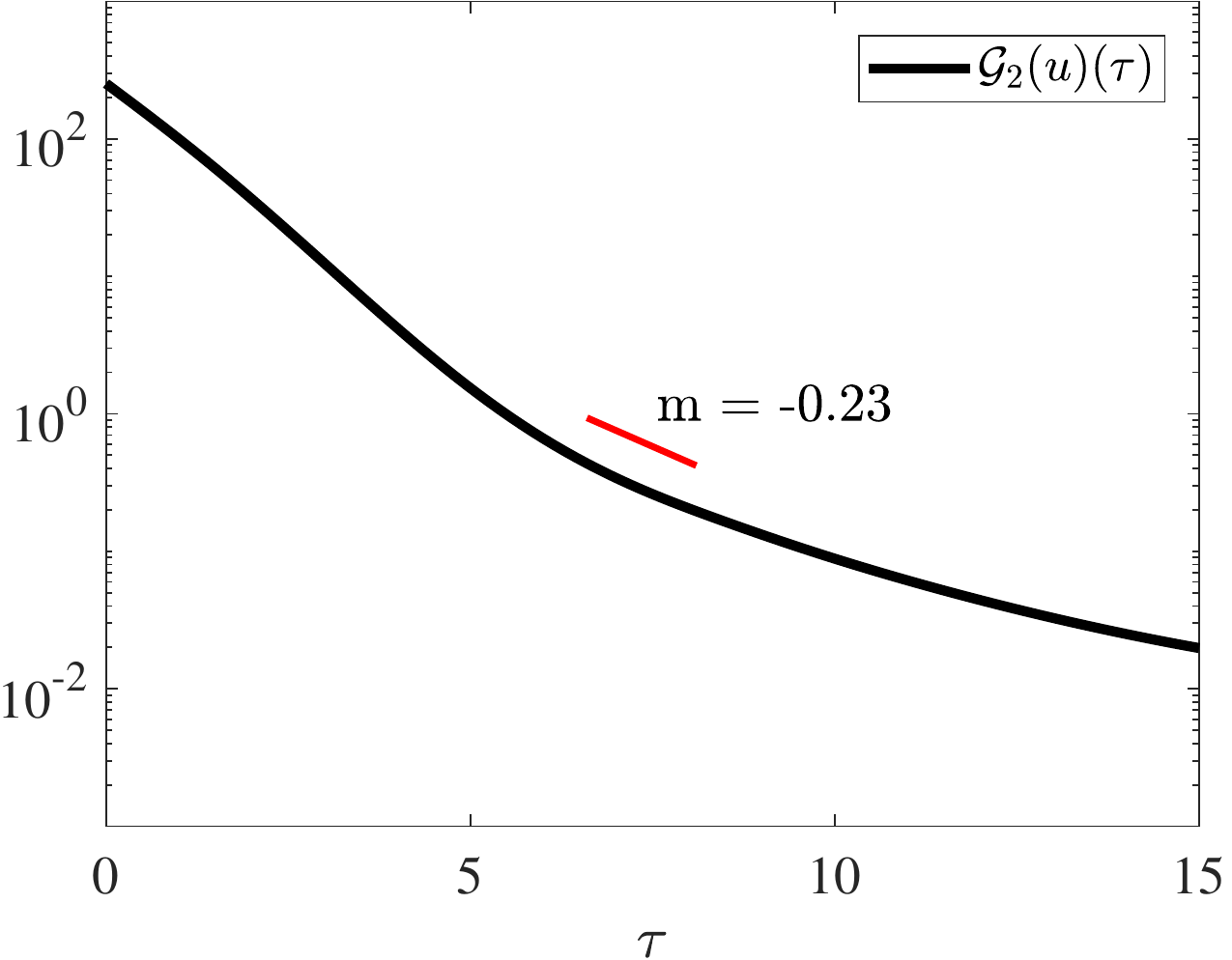}
	\end{minipage}
	\caption{Case 2 Fig \ref{fig:distribution_shapes}: $H=-4, \ \varepsilon=0.15, \ K=45, \ a=5, \ b=30$.}
	\label{fig:case2}
\end{figure}
\begin{figure}[H]
	\centering
	\begin{minipage}[t]{0.49\linewidth}
		\centering
		\textbf{A}\\
		\includegraphics[width=1\textwidth]{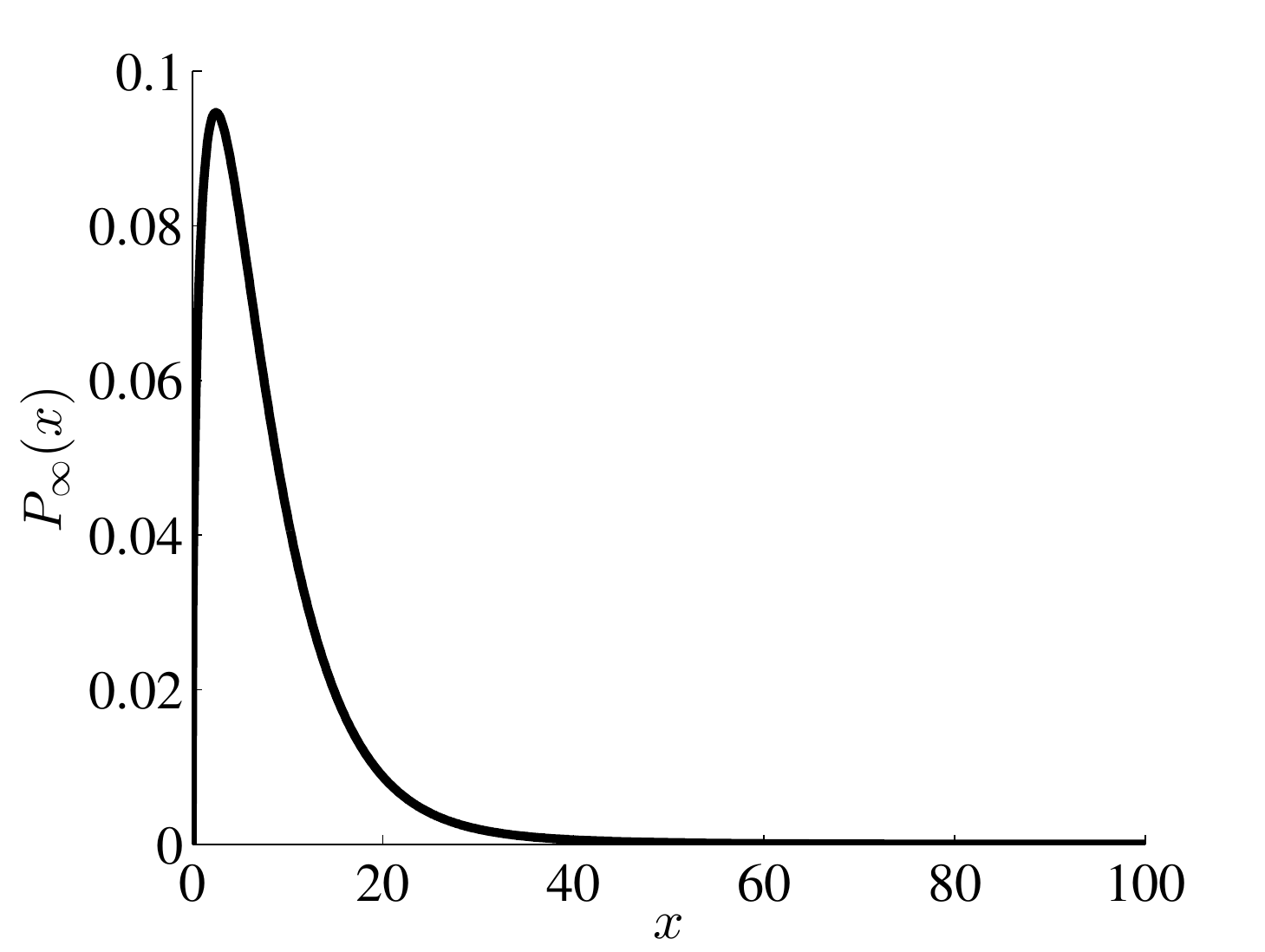}
	\end{minipage}
	\begin{minipage}[t]{0.49\linewidth}
		\centering
		\textbf{B}\\
		\includegraphics[width=1\textwidth]{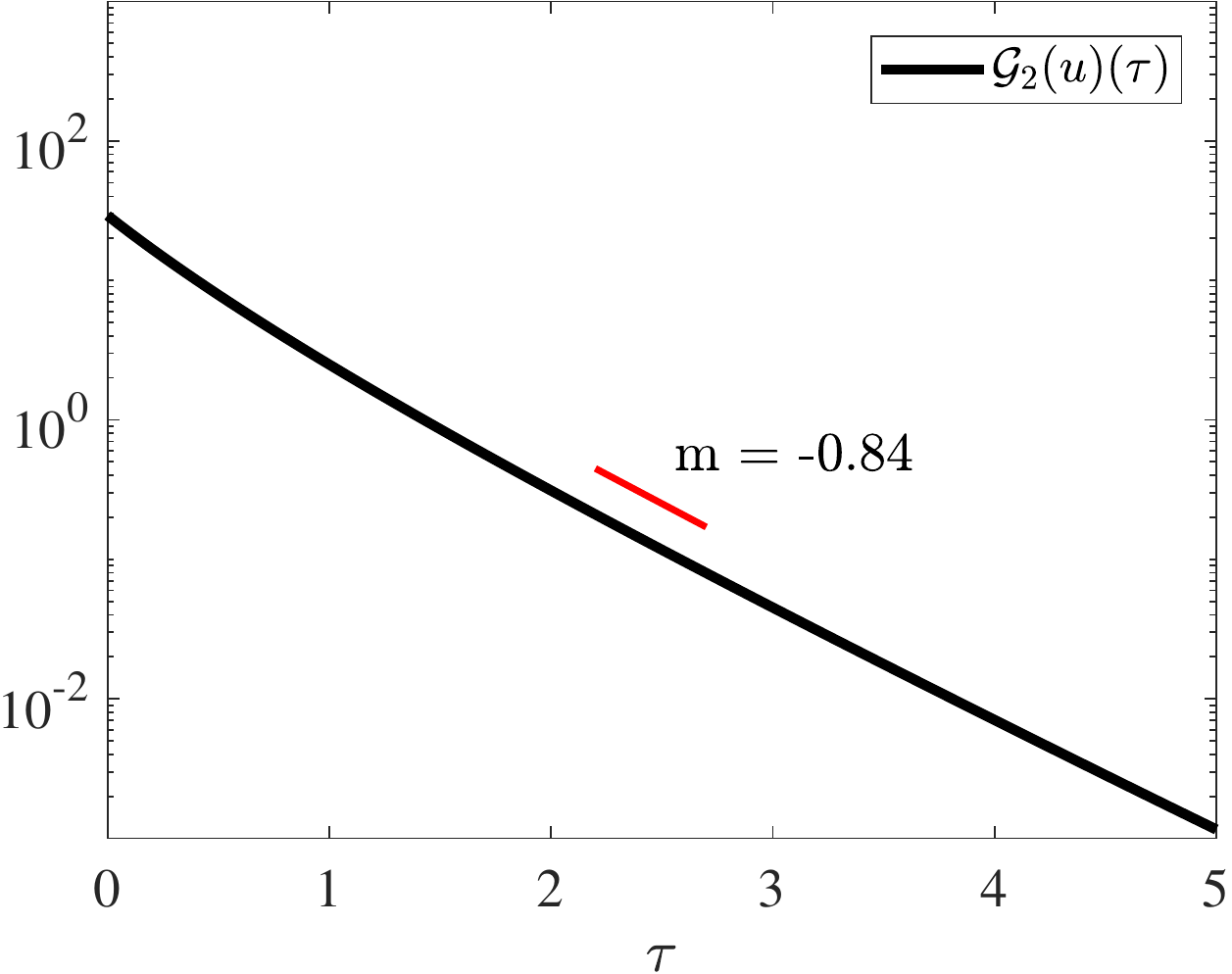}
	\end{minipage}
	\caption{Case 3 Fig \ref{fig:distribution_shapes}: $H=-4, \ \varepsilon=0.15, \ K=45, \ a=10, \ b=5$.}
	\label{fig:case3}
\end{figure}
\begin{figure}[H]
	\centering
	\begin{minipage}[t]{0.49\linewidth}
		\centering
		\textbf{A}\\
		\includegraphics[width=1\textwidth]{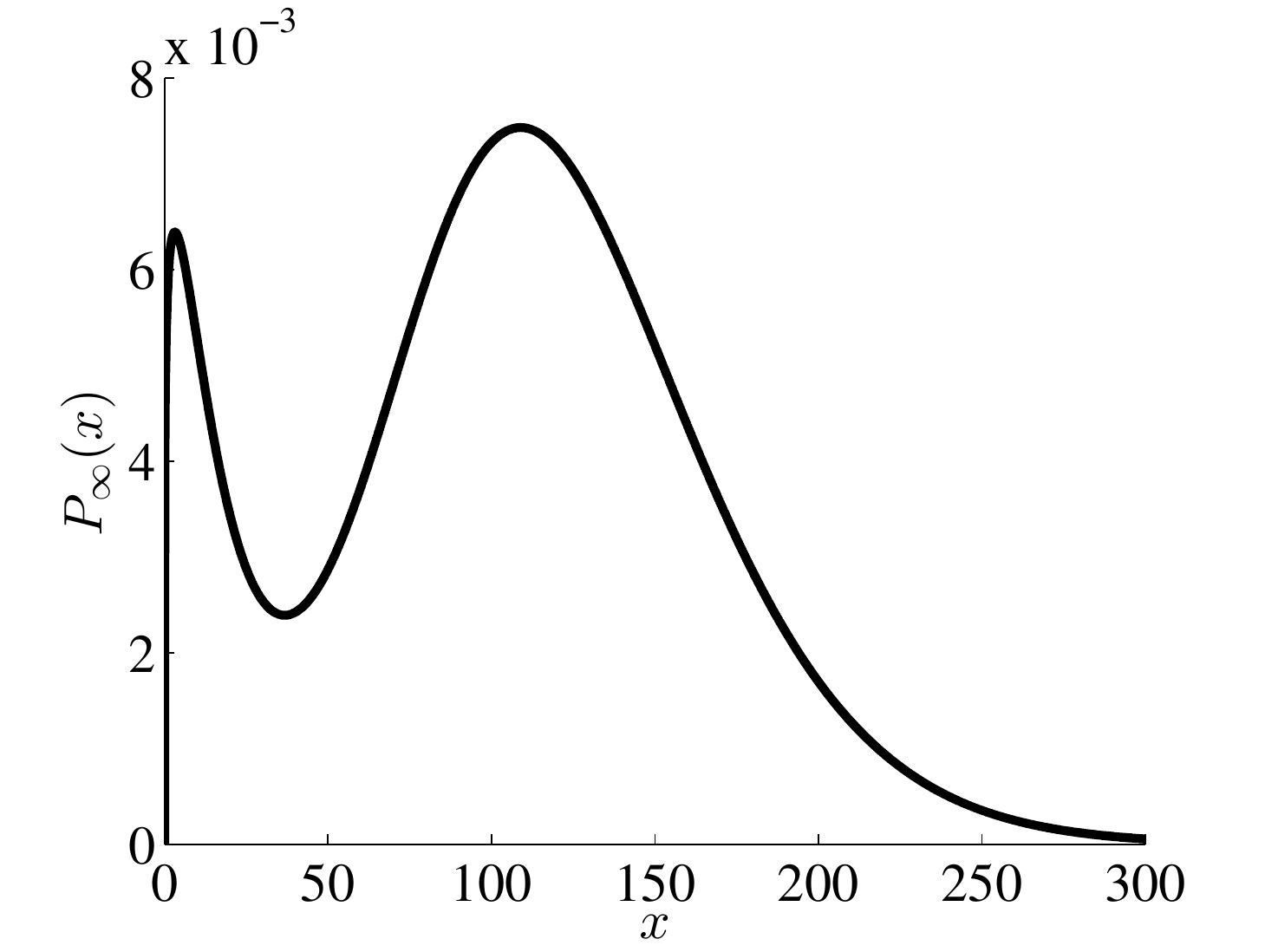}
	\end{minipage}
	\begin{minipage}[t]{0.49\linewidth}
		\centering
		\textbf{B}\\
		\includegraphics[width=1\textwidth]{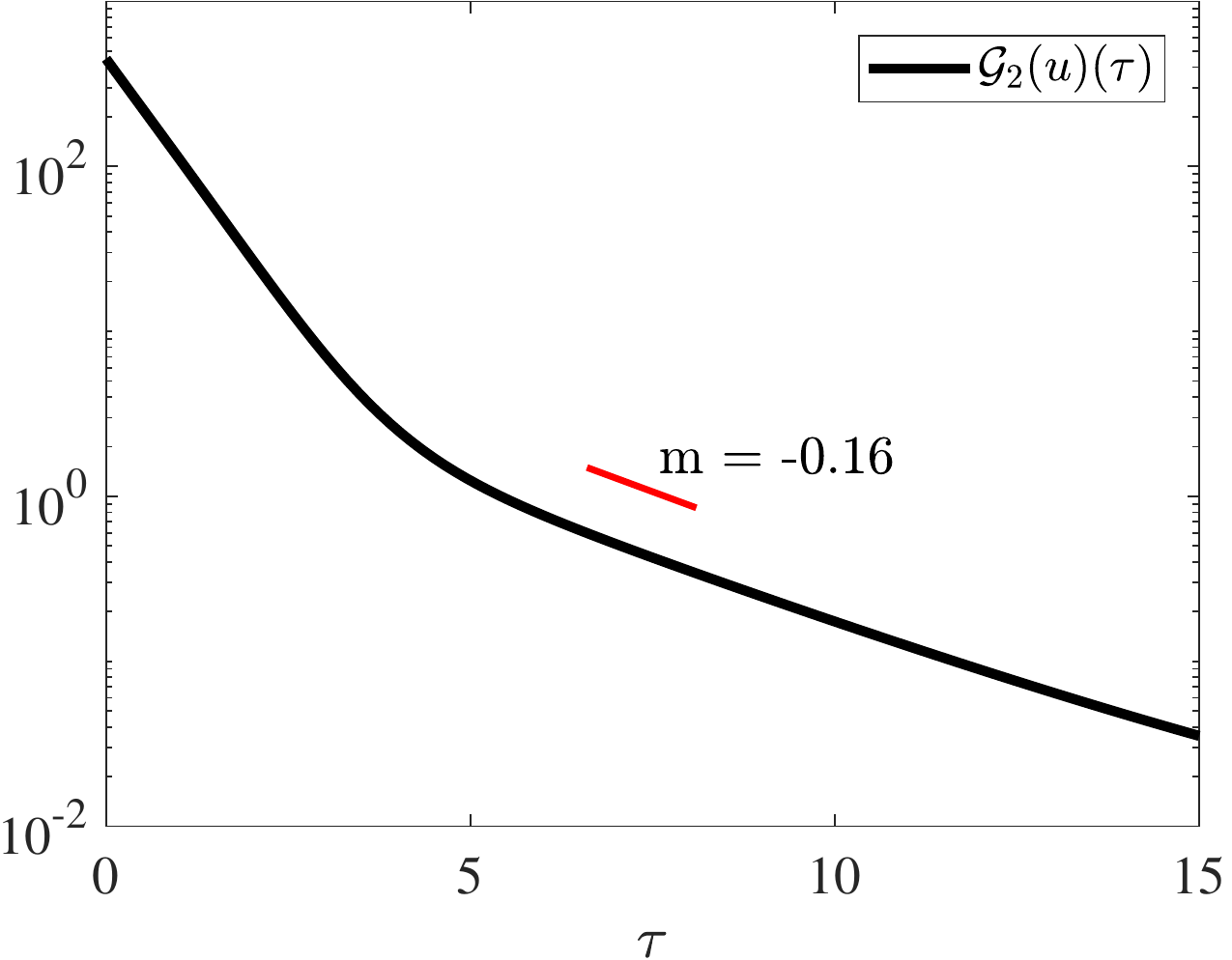}
	\end{minipage}
	\caption{Case 4 Fig \ref{fig:distribution_shapes}: $H=-4, \ \varepsilon=0.15, \ K=45, \ a=8, \ b=16$.}
	\label{fig:case4}
\end{figure}
\begin{figure}[H]
	\centering
	\begin{minipage}[t]{0.49\linewidth}
		\centering
		\textbf{A}\\
		\includegraphics[width=1\textwidth]{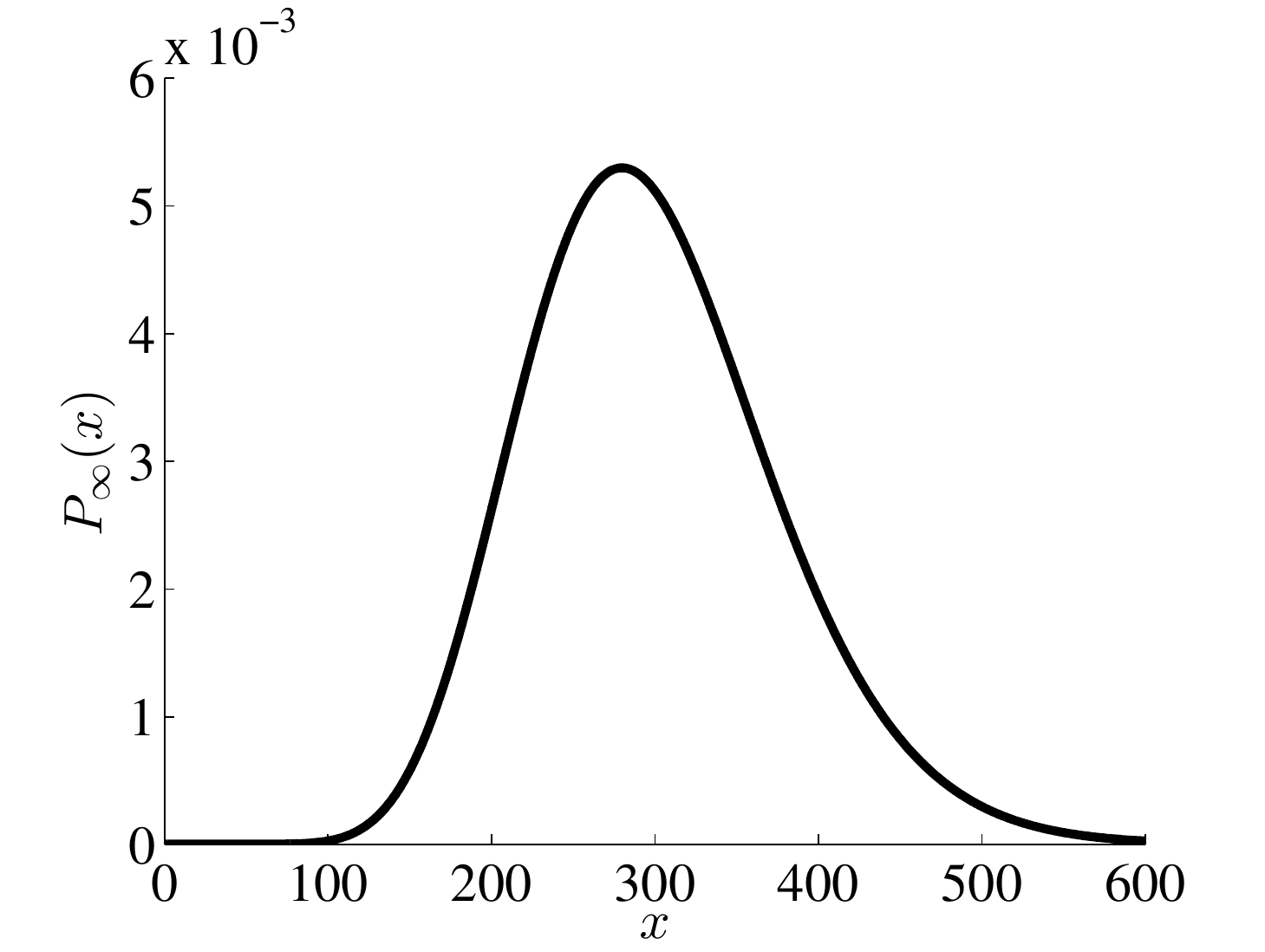}
	\end{minipage}
	\begin{minipage}[t]{0.49\linewidth}
		\centering
		\textbf{B}\\
		\includegraphics[width=1\textwidth]{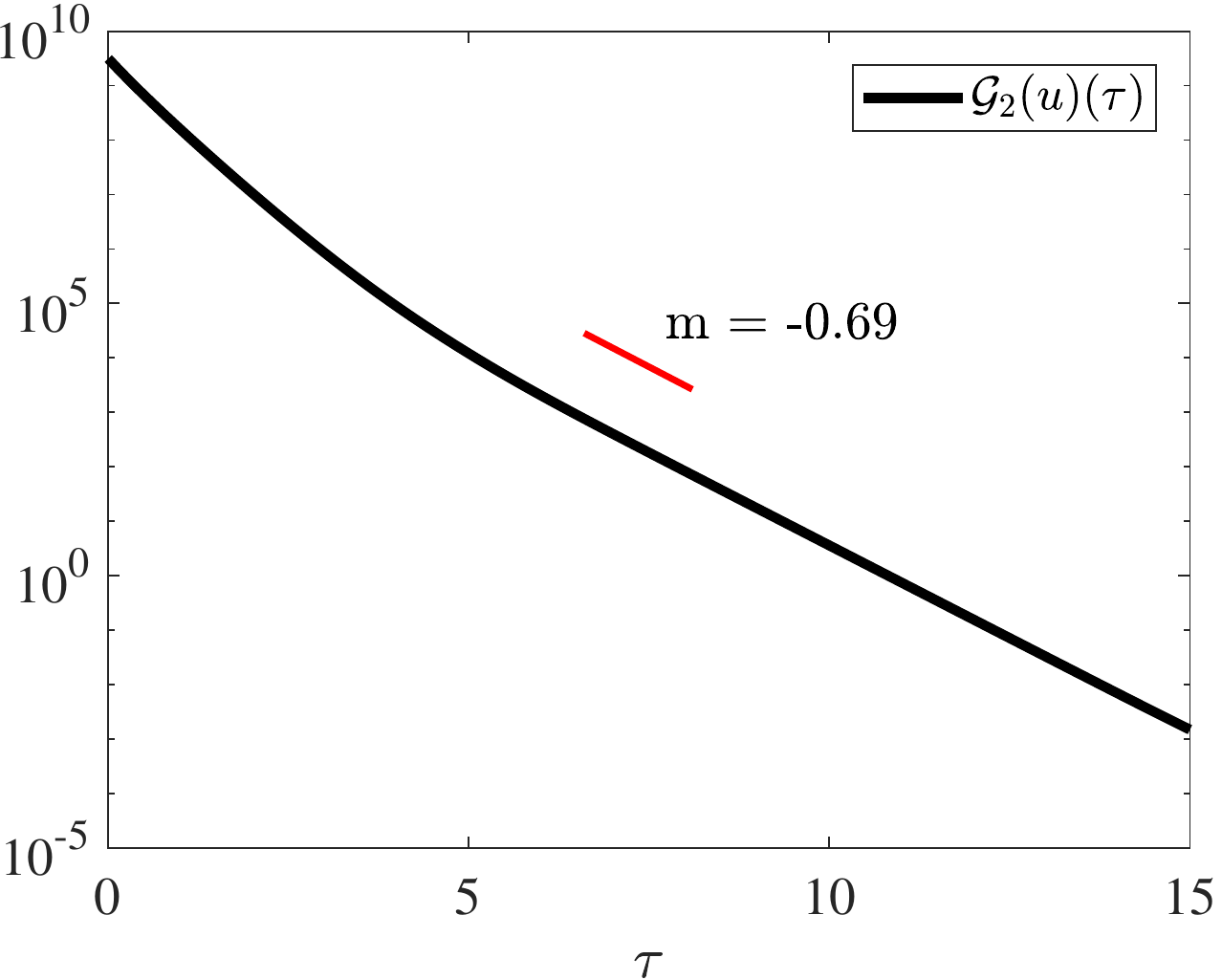}
	\end{minipage}
	\caption{Case 5 Fig \ref{fig:distribution_shapes}: $H=-4, \ \varepsilon=0.15, \ K=45, \ a=15, \ b=20$.}
	\label{fig:case5}
\end{figure}


\section{The $nD$ PIDE model}
\label{sec:nd}

We can generalise the entropy functional (\ref{eq:def_GRE}) defined
for the one dimension PIDE model in order to study the convergence of
the multidimensional model. A well-posedness theory of mild and
classical solutions satisfying the positivity and mass preservation,
the $L^1$-contraction principle, and the maximum principle can be
analogously obtained from the one dimensional strategy in Section
2. Let us summarize these properties in the next proposition.

\begin{proposition}
Given any classical solution of equation \eqref{eq:MEF_generalise} with normalised initial data, then the solution satisfies 
\begin{itemize}
\item[(i)] Mass conservation:
		\begin{equation*}
		\int_{{\mathbb{R}}_{+}^{n}}p(t,\mathbf{x})\mathrm{d} \mathbf{x}=\int_{{\mathbb{R}}_{+}^{n}}p_0(\mathbf{x})\mathrm{d} \mathbf{x} =1
		\end{equation*}

\item[(ii)] If $p_0$ is nonnegative, then the solution $p(t)$ of equation \eqref{eq:MEF_generalise} is nonnegative for all $t\geq 0$.

\item[(iii)] $L^1$-contraction principle:
		\begin{equation*}
		\int_{{\mathbb{R}}_{+}^{n}}|p(t,\mathbf{x})|\mathrm{d} \mathbf{x} \le \int_{{\mathbb{R}}_{+}^{n}}|p_0(\mathbf{x})|\mathrm{d} \mathbf{x}.
		\end{equation*}

\item[(iv)] $L^q$  bounds, $1<q<\infty$:
		\begin{equation*}
		\int_{{\mathbb{R}}_{+}^{n}}P_{\infty}(\mathbf{x})|u(t,\mathbf{x})|^q\mathrm{d} \mathbf{x} \le \int_{{\mathbb{R}}_{+}^{n}}P_{\infty}(\mathbf{x})|u_0(\mathbf{x})|^q\mathrm{d} \mathbf{x} ~~\text{with}~~ u(t,\mathbf{x}):=\frac{p(t,\mathbf{x})}{P_{\infty}(\mathbf{x})} \mbox{ and } u_0(\mathbf{x}):=\frac{p_0(\mathbf{x})}{P_{\infty}(\mathbf{x})}.
		\end{equation*}
		
\item[(v)] Maximum principle:
		\begin{equation*}
		\inf_{\mathbf{x} \in {\mathbb{R}}_{+}^{n}} u_0(\mathbf{x})\le u(t,\mathbf{x}) \le \sup_{\mathbf{x} \in {\mathbb{R}}_{+}^{n}} u_0(\mathbf{x}).
		\end{equation*}
	\end{itemize}
\end{proposition}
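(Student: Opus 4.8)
The plan is to mirror, one step at a time, the one-dimensional development of Section~2. First I would introduce mild solutions of \eqref{eq:MEF_generalise} by the method of characteristics: the transport part $\partial_t p + \sum_i \partial_{x_i}[\gamma_x^i(\mathbf{x}) x_i p] = 0$ is solved along the flow of the system $\dot{\mathbf{x}} = -(\gamma_x^i(\mathbf{x}) x_i)_{i=1}^n$, which leaves $\R_+^n$ invariant (because $\dot x_i = 0$ whenever $x_i = 0$); writing $X_t$ for this flow and $X_t\#$ for the induced push-forward on densities, one sets up the Duhamel fixed-point identity exactly as in \eqref{eq:mild}, with $L$ replaced by the $n$-dimensional right-hand side of \eqref{eq:MEF_generalise}, and obtains existence and uniqueness of mild solutions (and, for $\mathcal{C}^{1,\mathrm{b}}$ data, classical solutions) by the Picard-type argument of Theorem~\ref{thm:exist_mild}. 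With this in place, each of (i)--(v) transcribes the corresponding part of Lemma~\ref{lem:positivity}.

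For (i), integrate \eqref{eq:MEF_generalise} over $\R_+^n$: each transport term leaves only boundary contributions on the faces $\{x_i = 0\}$ and at infinity, which vanish since $\gamma_x^i(\mathbf{x}) x_i p \to 0$ there, while in each burst term Fubini and $\int_{y_i}^\infty \omega_i(x_i - y_i)\,\mathrm{d}x_i = 1$ make the gain cancel the loss $-k_m^i c_i p$ exactly, so $\frac{\mathrm{d}}{\mathrm{d}t}\int p = 0$. For (ii), split $\partial_t p = \mathcal{A}p + \mathcal{B}^+ p$, where $\mathcal{A}$ is the transport-plus-loss part $\partial_t p + \sum_i \partial_{x_i}[\gamma_x^i x_i p] + \sum_i k_m^i c_i p = 0$ (its semigroup $S_t$ is positivity preserving, being damped transport along $X_t$) and $\mathcal{B}^+ p := \sum_i k_m^i \int_0^{x_i} \omega_i(x_i - y_i) c_i(\mathbf{y}_i) p(\mathbf{y}_i)\,\mathrm{d}y_i$, which is nonnegative when $p \geq 0$; the Duhamel formula $p(t) = S_t p_0 + \int_0^t S_{t-s}\mathcal{B}^+[p(s)]\,\mathrm{d}s$ propagates nonnegativity, first for classical and then, by the $L^1$-stability estimate and density, for mild solutions. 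Part (iii) follows from (i) and (ii) by linearity: with $p_0 = (p_0)_+ - (p_0)_-$ and $T_t$ the solution semigroup, $\|p(t)\|_1 \le \|T_t (p_0)_+\|_1 + \|T_t (p_0)_-\|_1 = \int (p_0)_+ + \int (p_0)_- = \|p_0\|_1$.

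For (v), set $M := \sup_{\mathbf{x}} u_0(\mathbf{x})$ and observe that $w := M P_\infty - p$ solves \eqref{eq:MEF_generalise} --- by linearity, since $P_\infty$ solves \eqref{eq:nFried_infy} --- with the nonnegative initial datum $P_\infty (M - u_0) \geq 0$; part (ii) then gives $w(t) \geq 0$, i.e. $u(t,\mathbf{x}) \leq M$, and the lower bound is symmetric. Finally (iv) is the general relative entropy estimate for the convex $\mathcal{C}^1$ function $H(u) = |u|^q$ with $q > 1$: carrying out the computation of Lemma~\ref{lemma:l2} and Proposition~\ref{prop:Hconvex_entropy1D} coordinate by coordinate yields $\frac{\mathrm{d}}{\mathrm{d}t}\int_{\R_+^n} H(u)P_\infty\,\mathrm{d}\mathbf{x} = \sum_i k_m^i \iint \omega_i(x_i - y_i)\big( H(u(\mathbf{x})) - H(u(\mathbf{y}_i)) + H'(u(\mathbf{x}))(u(\mathbf{y}_i) - u(\mathbf{x})) \big) c_i(\mathbf{y}_i) P_\infty(\mathbf{y}_i) \leq 0$, the inequality being convexity of $H$, while the boundedness of $u$ just proved is what forces the boundary terms $\int_{\R_+^n} \partial_{x_i}[H(u)\gamma_x^i x_i P_\infty]$ to vanish; integrating in $t$ gives the claimed monotonicity.

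The main obstacle, sharper here than in one dimension, is controlling the boundary terms in all these integrations by parts: in $n$ variables one needs $\gamma_x^i(\mathbf{x}) x_i P_\infty(\mathbf{x})$, $\gamma_x^i(\mathbf{x}) x_i p(t,\mathbf{x})$ and $H(u)\gamma_x^i x_i P_\infty$ to tend to $0$ both as $x_i \to 0$ and as $|\mathbf{x}| \to \infty$, and --- unlike \eqref{eq:analytic_sol} --- there is no explicit formula for $P_\infty$ from which to read this off. This is exactly the role of the structural hypotheses on $\gamma_x^i$ (enough regularity, e.g. Lipschitz, and non-degeneracy in the interior so that the characteristic flow is well defined and complete) and of Assumption~\ref{ass:ge} on the decay of $P_\infty$; these, together with $u_0 \in L^\infty$ as in Proposition~\ref{prop:Hconvex_entropy1D}, should be part of the standing hypotheses on a ``classical solution with normalised initial data'' before the proposition is stated. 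Everything else is a routine transcription of Section~2.
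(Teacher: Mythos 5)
Your proposal is correct and follows essentially the route the paper itself indicates: the authors give no details for this proposition, stating only that it is obtained ``analogously from the one dimensional strategy in Section 2'' and as a formal consequence of the general relative entropy method, which is exactly what you carry out (mild solutions via the characteristic flow and Duhamel, positivity by splitting off the damped transport semigroup, $L^1$-contraction from mass conservation and positivity, the maximum principle via $MP_\infty - p$, and the $L^q$ bounds from the entropy identity with $H(u)=|u|^q$). Your closing remark that the vanishing of the boundary terms is not free in $n$ dimensions and must be encoded in hypotheses on $\gamma_x^i$ and on $P_\infty$ (Assumption \ref{ass:ge}) is consistent with how the paper handles this in Proposition \ref{prop:Hconvex_entropynD}.
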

We will not do any details of these classical results. We just point
out that these properties can be formally seen as consequences of the
general relative entropy method \cite{Micheletal04,Micheletal05}.  Let
us now concentrate on the entropy method. Given $H(u)$ any convex
function of $u$, we define the $n$-dimensional general relative
entropy functional as:
\begin{equation*}
\mathcal{G}_H^n(u):= \int_{{\mathbb{R}}_{+}^{n}} H(u(\mathbf{x})) P_{\infty}(\mathbf{x}) \mathrm{d}\mathbf{x},
\end{equation*}
with $u(\mathbf{x}):=p(\mathbf{x})/P_\infty(\mathbf{x})$ as above. The main difference in the multidimensional case is that the stationary states are not explicit and thus, we need to assume certain properties on their behavior. In fact, in order to apply the entropy-entropy production method we make the following assumption:
\begin{assumption}\label{ass:ge}
	The following property holds
	\begin{equation*}
	\int_0^{\infty} \dfrac{\partial [H(u(\mathbf{x}))\gamma_x^i(\mathbf{x}) x_i P_\infty(\mathbf{x})]}{\partial x_i} \mathrm{d}x_i = 0 , \qquad \forall i=1,\cdots,n,
	\end{equation*}
	for any convex function $H(u)$ and for all $p \in L^1((0,+\infty)) \cap L^2((0,+\infty), P_\infty^{-1})$.
\end{assumption}

Similarly to the one dimensional case, we can obtain the following identity. The proof is totally analogous to the one of Lemma \ref{lemma:l2} and we skip it here for brevity.

\begin{lemma}\label{lemma:l2_nD}
	For any $i=1,\cdots,n$ the following equality is verified:
	\begin{align*}
	H'(u(\mathbf{x}))\dfrac{\partial [\gamma_x^i(\mathbf{x}) x_i p(\mathbf{x})]}{\partial x_i} =&\, \dfrac{\partial [H(u(\mathbf{x}))\gamma_x^i(\mathbf{x}) x_i P_{\infty}(\mathbf{x})]}{\partial x_i} \nonumber\\
	&+ \left(u(\mathbf{x})H'(u(\mathbf{x}))-H(u(\mathbf{x}))\right)\dfrac{\partial [\gamma_x^i(\mathbf{x}) x_i P_{\infty}(\mathbf{x})]}{\partial x_i}\,.
	\end{align*}
\end{lemma}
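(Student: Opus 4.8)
The plan is to carry out, in the single variable $x_i$, exactly the chain of elementary manipulations used in the proof of Lemma \ref{lemma:l2}, with the coefficient $\gamma_x^i(\mathbf{x})\,x_i$ playing the role that $x$ plays in the one-dimensional case; all the remaining variables $x_j$ with $j\neq i$ are treated as fixed parameters throughout. For brevity write $W:=\gamma_x^i(\mathbf{x})\,x_i$, so that the claimed identity reads
\begin{equation*}
H'(u)\,\frac{\partial [W p]}{\partial x_i}
= \frac{\partial [H(u)\,W P_{\infty}]}{\partial x_i}
+\bigl(uH'(u)-H(u)\bigr)\,\frac{\partial [W P_{\infty}]}{\partial x_i},
\end{equation*}
with $u=p/P_\infty$ and everything evaluated at $\mathbf{x}$.

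First I would record the derivative of the composite $H(u)$: since $u=p/P_\infty$,
\begin{equation*}
\frac{\partial H(u)}{\partial x_i}
= H'(u)\,\frac{\partial u}{\partial x_i}
= \frac{H'(u)}{P_\infty}\left(\frac{\partial p}{\partial x_i}-u\,\frac{\partial P_\infty}{\partial x_i}\right).
\end{equation*}
Then, applying the product rule to $\partial[H(u)WP_\infty]/\partial x_i$ and inserting the previous line,
\begin{equation*}
\frac{\partial [H(u)WP_\infty]}{\partial x_i}
= W H'(u)\left(\frac{\partial p}{\partial x_i}-u\,\frac{\partial P_\infty}{\partial x_i}\right)
+ H(u)\,\frac{\partial [WP_\infty]}{\partial x_i}.
\end{equation*}
Next I would use the Leibniz-type identities $W\,\partial_{x_i} p=\partial_{x_i}(Wp)-p\,\partial_{x_i}W$ and $W\,\partial_{x_i} P_\infty=\partial_{x_i}(WP_\infty)-P_\infty\,\partial_{x_i}W$ to rewrite $W(\partial_{x_i}p-u\,\partial_{x_i}P_\infty)$ as $\partial_{x_i}(Wp)-u\,\partial_{x_i}(WP_\infty)-(\partial_{x_i}W)(p-uP_\infty)$, where the last term vanishes because $p=uP_\infty$. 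Substituting this back and solving the resulting equation for $H'(u)\,\partial_{x_i}(Wp)$ yields precisely the asserted equality, after reordering the two terms proportional to $\partial_{x_i}(WP_\infty)$.

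The only point deserving a word of care is that $\gamma_x^i$, and hence $W$, genuinely depends on $x_i$ as well as on the other coordinates; but this is harmless, since the two Leibniz identities above hold for an arbitrary differentiable coefficient $W$, so the computation never used the special form $W=x$ of the one-dimensional setting. Consequently there is no real obstacle here: one only needs $P_\infty>0$ and enough smoothness of $p$, $P_\infty$, $\gamma_x^i$ and $H$ for the pointwise manipulations to be legitimate, which holds under the standing hypotheses. This is exactly why the argument can be described as totally analogous to that of Lemma \ref{lemma:l2}.
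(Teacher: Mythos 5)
Your computation is correct and is precisely the argument the paper has in mind when it says the proof is ``totally analogous'' to Lemma \ref{lemma:l2}: the product rule for $H(u)$ with $u=p/P_\infty$, the Leibniz identities for the coefficient $W=\gamma_x^i(\mathbf{x})x_i$, and the cancellation of the $\partial_{x_i}W$ terms via $p=uP_\infty$ reproduce the one-dimensional proof step by step. Your explicit remark that the extra $\partial_{x_i}W$ contributions cancel is exactly the point that justifies replacing the coefficient $x$ by the $x_i$-dependent $\gamma_x^i(\mathbf{x})x_i$, so nothing is missing.
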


With this identy, we can now derive the evolution of the relative entropy as in the one dimensional case. We will not make explicit the time dependency of the solutions again for simplicity.

\begin{proposition}\label{prop:Hconvex_entropynD}
For any convex function $H(u(\mathbf{x}))$, the general entropy functional $\mathcal{G}_H^n(u)$ satisfies
	\begin{equation}\label{eq:Dt_Gentropy_eqn_nD}
	\dfrac{\mathrm{d}\mathcal{G}_H^n(u)}{\mathrm{d} t} = \sum_{i=1}^{n}k_m^i\! \int_{{\mathbb{R}}_{+}^{n}}\!\!\int_{y_i}^{\infty}\!\!\!\!\left[ H(u(\mathbf{x})) - H(u(\mathbf{y}_i)) +H'\left(u(\mathbf{x}))(u(\mathbf{y}_i)-u(\mathbf{x})\right)\right]
	\omega_{c,i} P_{\infty}(\mathbf{y}_i) \mathrm{d} x_i \mathrm{d} \mathbf{y}_i\leq 0\,,
	\end{equation}
with the shortcut $\omega_{c,i}=\omega_i(x_i-y_i) c_i(\mathbf{y}_i)$.
\end{proposition}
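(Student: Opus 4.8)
The plan is to mimic the one-dimensional argument behind Proposition~\ref{prop:Hconvex_entropy1D}, carried out index by index in $i$, using Assumption~\ref{ass:ge} exactly where the 1D proof invoked the explicit boundary decay $xP_\infty(x)\to 0$ at $0$ and $+\infty$.

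First I would differentiate under the integral sign,
\[
\frac{\mathrm{d}\mathcal{G}_H^n(u)}{\mathrm{d}t}
= \int_{\mathbb{R}_+^n} H'(u(\mathbf{x}))\, \partial_t p(t,\mathbf{x})\,\mathrm{d}\mathbf{x},
\]
and substitute $\partial_t p$ by the right-hand side of \eqref{eq:MEF_generalise}. This produces, for each $i$, a transport contribution $H'(u)\,\partial_{x_i}[\gamma_x^i(\mathbf{x}) x_i p]$ together with the burst gain/loss contribution $k_m^i H'(u)\big(\int_0^{x_i}\omega_i(x_i-y_i) c_i(\mathbf{y}_i) p(\mathbf{y}_i)\,\mathrm{d}y_i - c_i(\mathbf{x}) p(\mathbf{x})\big)$. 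Next I apply Lemma~\ref{lemma:l2_nD} to each transport term and sum over $i$, writing $H'(u)\,\partial_{x_i}[\gamma_x^i x_i p] = \partial_{x_i}[H(u)\gamma_x^i x_i P_\infty] + (uH'(u)-H(u))\,\partial_{x_i}[\gamma_x^i x_i P_\infty]$. Integrating the first group over $\mathbf{x}$ and using Fubini, the inner one-variable integral $\int_0^\infty \partial_{x_i}[H(u)\gamma_x^i x_i P_\infty]\,\mathrm{d}x_i$ vanishes by Assumption~\ref{ass:ge}, for each $i$.

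For the second group, the crucial observation is that the scalar factor $uH'(u)-H(u)$ does not depend on $i$, so it can be pulled out of the sum; the remaining sum $\sum_i \partial_{x_i}[\gamma_x^i x_i P_\infty]$ is then replaced using the stationary equation \eqref{eq:nFried_infy}, namely $\sum_i \partial_{x_i}[\gamma_x^i x_i P_\infty] = \sum_i k_m^i\big(c_i(\mathbf{x}) P_\infty(\mathbf{x}) - \int_0^{x_i}\omega_i c_i(\mathbf{y}_i) P_\infty(\mathbf{y}_i)\,\mathrm{d}y_i\big)$. Writing $p = u P_\infty$ throughout, collecting the $H(u)$-terms with the $H'(u)$ gain/loss terms, and performing in each $i$-summand the same reordering of integration as in the 1D proof (using $\int_{y_i}^\infty \omega_i(x_i-y_i)\,\mathrm{d}x_i = 1$ to rewrite the single-integral loss term as a double integral, then relabelling the roles of $x_i$ and $y_i$) yields exactly \eqref{eq:Dt_Gentropy_eqn_nD}. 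Non-positivity is then immediate from convexity of $H$, which gives $H(u(\mathbf{x})) - H(u(\mathbf{y}_i)) + H'(u(\mathbf{x}))(u(\mathbf{y}_i)-u(\mathbf{x})) \le 0$ pointwise.

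As in Section~\ref{sec:entropy}, all these manipulations are legitimate because the maximum principle stated just above bounds $u$ uniformly, so every integrand is dominated and Fubini applies; the only genuinely new ingredient relative to the one-dimensional computation is Assumption~\ref{ass:ge}, which stands in for the explicit boundary behaviour of $P_\infty$ that is no longer available. I expect the main subtlety — rather than a real obstacle — to be precisely this bookkeeping around the divergence terms: one must apply Assumption~\ref{ass:ge} in its integrated, one-variable form, and use the $i$-independence of $uH'(u)-H(u)$ \emph{before} invoking \eqref{eq:nFried_infy}, since term by term the stationary relation does not hold.
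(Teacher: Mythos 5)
Your proposal is correct and follows essentially the same route as the paper: differentiation under the integral, substitution of the equation, Lemma~\ref{lemma:l2_nD} on each transport term, cancellation of the full-derivative terms via Assumption~\ref{ass:ge}, replacement of $\sum_i \partial_{x_i}[\gamma_x^i x_i P_\infty]$ by the stationary equation after pulling out the $i$-independent factor, and the final Fubini/relabelling step using $\int_{y_i}^\infty \omega_i(x_i-y_i)\,\mathrm{d}x_i=1$, with non-positivity from convexity. Your remark that the stationary relation can only be used after summing over $i$ (not term by term) is exactly the bookkeeping point the paper's computation relies on.
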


\begin{proof}[ Proof of proposition \ref{prop:Hconvex_entropynD}]
  We compute the time derivative of the general relative entropy
  functional to get
\begin{align*}
\dfrac{\mathrm{d}\mathcal{G}_H^n(u)}{\mathrm{d}t}=  \dfrac{\partial }{\partial t} \int_{{\mathbb{R}}_{+}^{n}}H(u(\mathbf{x}))P_{\infty}(\mathbf{x})\mathrm{d}\mathbf{x}& = \int_{{\mathbb{R}}_{+}^{n}}\dfrac{\partial }{\partial t} H(u(\mathbf{x}))P_{\infty}(\mathbf{x})\mathrm{d}\mathbf{x} = \int_{{\mathbb{R}}_{+}^{n}}H'(u(\mathbf{x}))\dfrac{\partial p }{\partial t} \mathrm{d}\mathbf{x} .
\end{align*}
Replacing the time derivative of $p(\mathbf{x})$ in the last equality by its expression (\ref{eq:MEF_generalise}), we obtain
\begin{align*}
\dfrac{\mathrm{d}\mathcal{G}_H^n(u)}{\mathrm{d} t}=&\, \int_{{\mathbb{R}}_{+}^{n}}H'(u(\mathbf{x}))\left( \sum_{i=1}^{n}\left( \dfrac{\partial}{\partial x_i}\left[\gamma_x^i(\mathbf{x}) x_i p(\mathbf{x})\right]\right) \right)\mathrm{d}\mathbf{x}\\
&\, +\int_{{\mathbb{R}}_{+}^{n}}H'(u(\mathbf{x}))\left( \sum_{i=1}^{n}\left( k_m^i \int_0^{x_i} \! \omega_i(x_i-y_i) c_i(\mathbf{y}_i)p(\mathbf{y}_i) \, \mathrm{d}y_i -k_1^ic_i(\mathbf{x})p(\mathbf{x})\right) \right)\mathrm{d}\mathbf{x}.
\end{align*}
Summations and integrals in the above expression are interchangeable, so that
\begin{align}\label{eq:n_derH_1_sumint}
\dfrac{\mathrm{d}\mathcal{G}_H^n(u)}{\mathrm{d} t}= &\,\sum_{i=1}^{n}\left( \int_{{\mathbb{R}}_{+}^{n}}H'(u(\mathbf{x}))\left( \dfrac{\partial}{\partial x_i}\left[\gamma_x^i(\mathbf{x}) x_i p(\mathbf{x})\right]\right) \right)\mathrm{d}\mathbf{x}  \nonumber\\
 &+\sum_{i=1}^{n}\left( \int_{{\mathbb{R}}_{+}^{n}}H'(u(\mathbf{x}))\left( k_m^i \int_0^{x_i} \! \omega_i(x_i-y_i) c_i(\mathbf{y}_i)p(\mathbf{y}_i) \, \mathrm{d}y_i -k_1^ic_i(\mathbf{x})p(\mathbf{x})\right) \right)\mathrm{d}\mathbf{x}.
\end{align}
Next, using Lemma \ref{lemma:l2_nD}, the first term on the right hand side in the above equation becomes
\begin{align}\label{eq:aux_npartial}
\sum_{i=1}^{n} \left( \int_{{\mathbb{R}}_{+}^{n}} \right. & \left. H'(u(\mathbf{x}))\left( \dfrac{\partial}{\partial x_i}\left[\gamma_x^i(\mathbf{x}) x_i p(\mathbf{x})\right]\right) \right)\mathrm{d}\mathbf{x} \nonumber\\
=&\sum_{i=1}^{n}\left( \int_{{\mathbb{R}}_{+}^{n}} \dfrac{\partial [H(u(\mathbf{x}))\gamma_x^i(\mathbf{x}) x_i P_{\infty}(\mathbf{x})]}{\partial x_i} + \left(u(\mathbf{x})H'(u(\mathbf{x}))-H(u(\mathbf{x}))\right)\dfrac{\partial [\gamma_x^i(\mathbf{x}) x_i P_{\infty}(\mathbf{x})]}{\partial x_i} \right)\mathrm{d}\mathbf{x} \nonumber \\
\hspace{-2cm}=&\sum_{i=1}^{n}\left( \int_{{\mathbb{R}}_{+}^{n}}  \left(u(\mathbf{x})H'(u(\mathbf{x}))-H(u(\mathbf{x}))\right)\dfrac{\partial [\gamma_x^i(\mathbf{x}) x_i P_{\infty}(\mathbf{x})]}{\partial x_i} \right)\mathrm{d}\mathbf{x} ,
\end{align}
this last identity holds using Assumption \ref{ass:ge}.
Note that, the first term in the last summation in equation (\ref{eq:aux_npartial}) is equivalent to
\begin{align}\label{eq:aux_npartial_derH}
\sum_{i=1}^{n}&\left( \int_{{\mathbb{R}}_{+}^{n}} u(\mathbf{x})H'(u(\mathbf{x}))\dfrac{\partial [\gamma_x^i(\mathbf{x}) x_i P_{\infty}(\mathbf{x})]}{\partial x_i} \right)\mathrm{d}\mathbf{x} \nonumber\\
&= \int_{{\mathbb{R}}_{+}^{n}} u(\mathbf{x})H'(u(\mathbf{x}))\sum_{i=1}^{n}\left( \dfrac{\partial [\gamma_x^i(\mathbf{x}) x_i P_{\infty}(\mathbf{x})]}{\partial x_i} \right)\mathrm{d}\mathbf{x} \nonumber\\
& =\int_{{\mathbb{R}}_{+}^{n}}u(\mathbf{x})H'(u(\mathbf{x}))\left( \sum_{i=1}^{n}\left(- k_m^i \int_0^{x_i} \! \omega_i(x_i-y_i) c_i(\mathbf{y}_i)P_{\infty}(\mathbf{y}_i) \, \mathrm{d}y_i + k_1^ic_i(\mathbf{x})P_{\infty}(\mathbf{x})\right) \right)\mathrm{d}\mathbf{x} \nonumber \\
& =\sum_{i=1}^{n}\left(\int_{{\mathbb{R}}_{+}^{n}}H'(u(\mathbf{x}))\left( - u(\mathbf{x})k_m^i \int_0^{x_i} \! \omega_i(x_i-y_i) c_i(\mathbf{y}_i)P_{\infty}(\mathbf{y}_i) \, \mathrm{d}y_i + k_1^ic_i(\mathbf{x})p(\mathbf{x})\right) \right)\mathrm{d}\mathbf{x},
\end{align}
and the second term in the last summation in equation (\ref{eq:aux_npartial}) is equivalent to
\begin{align}\label{eq:aux_npartial_H}
\sum_{i=1}^{n}&\left( \int_{{\mathbb{R}}_{+}^{n}} -H(u(\mathbf{x}))\dfrac{\partial [\gamma_x^i(\mathbf{x}) x_i P_{\infty}(\mathbf{x})]}{\partial x_i} \right)\mathrm{d}\mathbf{x} = \int_{{\mathbb{R}}_{+}^{n}}- H(u(\mathbf{x}))\sum_{i=1}^{n}\left( \dfrac{\partial [\gamma_x^i(\mathbf{x}) x_i P_{\infty}(\mathbf{x})]}{\partial x_i} \right)\mathrm{d}\mathbf{x}\nonumber \\
& =\int_{{\mathbb{R}}_{+}^{n}}-H(u(\mathbf{x}))\left( \sum_{i=1}^{n}\left(- k_m^i \int_0^{x_i} \! \omega_i(x_i-y_i) c_i(\mathbf{y}_i)P_{\infty}(\mathbf{y}_i) \, \mathrm{d}y_i + k_1^ic_i(\mathbf{x})P_{\infty}(\mathbf{x})\right) \right)\mathrm{d}\mathbf{x} \nonumber \\
& =\sum_{i=1}^{n}\left(\int_{{\mathbb{R}}_{+}^{n}}H(u(\mathbf{x}))\left(  k_m^i \int_0^{x_i} \! \omega_i(x_i-y_i) c_i(\mathbf{y}_i)P_{\infty}(\mathbf{y}_i) \, \mathrm{d}y_i - k_1^ic_i(\mathbf{x})P_{\infty}(\mathbf{x})\right) \right)\mathrm{d}\mathbf{x}.
\end{align}
Thus, using the expressions (\ref{eq:aux_npartial_derH})-(\ref{eq:aux_npartial_H}), replacing first in (\ref{eq:aux_npartial}) and finally in the equation (\ref{eq:n_derH_1_sumint}), we obtain the following equality
\begin{align*}
\dfrac{\mathrm{d}\mathcal{G}_H^n(u)}{\mathrm{d} t}=& \sum_{i=1}^{n}\left(\int_{{\mathbb{R}}_{+}^{n}}\left( - u(t,\mathbf{x})k_m^i \int_0^{x_i} \! \omega_i(x_i-y_i) c_i(\mathbf{y}_i)P_{\infty}(\mathbf{y}_i) \, \mathrm{d}y_i + k_1^ic_i(\mathbf{x})p(\mathbf{x})\right)  H'(u(\mathbf{x}))\right)\mathrm{d}\mathbf{x} \nonumber \\
&+ \sum_{i=1}^{n}\left(\int_{{\mathbb{R}}_{+}^{n}}H(u(\mathbf{x}))\left(  k_m^i \int_0^{x_i} \! \omega_i(x_i-y_i) c_i(\mathbf{y}_i)P_{\infty}(\mathbf{y}_i) \, \mathrm{d}y_i - k_1^ic_i(\mathbf{x})P_{\infty}(\mathbf{x})\right) \right)\mathrm{d}\mathbf{x} \nonumber \\
&+\sum_{i=1}^{n}\left( \int_{{\mathbb{R}}_{+}^{n}}H'(u(\mathbf{x}))\left( k_m^i \int_0^{x_i} \! \omega_i(x_i-y_i) c_i(\mathbf{y}_i)p(t,\mathbf{y}_i) \, \mathrm{d}y_i -k_1^ic_i(\mathbf{x})p(\mathbf{x})\right) \right)\mathrm{d}\mathbf{x} 
\end{align*}
\begin{align}\label{eq:n_derH_extended}
=&\, \sum_{i=1}^{n}\left(\int_{{\mathbb{R}}_{+}^{n}}H(u(\mathbf{x}))\left(  k_m^i \int_0^{x_i} \! \omega_i(x_i-y_i) c_i(\mathbf{y}_i)P_{\infty}(\mathbf{y}_i) \, \mathrm{d}y_i - k_1^ic_i(\mathbf{x})P_{\infty}(\mathbf{x})\right) \right)\mathrm{d}\mathbf{x} \nonumber \\
&+\sum_{i=1}^{n}\left( \int_{{\mathbb{R}}_{+}^{n}}\left( k_m^i \int_0^{x_i} \! \omega_i(x_i-y_i) c_i(\mathbf{y}_i)P_{\infty}(\mathbf{y}_i)\left[u(t,\mathbf{y}_i)-u(t,\mathbf{x})\right]\, \mathrm{d}y_i \right) H'(u(\mathbf{x}))\right)\mathrm{d}\mathbf{x}.
\end{align}
By changing the order of integration in the above expression and using the following identity
\begin{equation*}
\int_{y_i}^{\infty}\omega_i(x_i-y_i)\mathrm{d}x_i=1, \qquad \forall i=1,\cdots,n,
\end{equation*}
the equation (\ref{eq:n_derH_extended}) can be rewritten in the following equivalent form
\begin{align*}
\dfrac{\mathrm{d}\mathcal{G}_H^n(u)}{\mathrm{d} t}= &\, \sum_{i=1}^{n}\left(k_m^i\int_{{\mathbb{R}}_{+}^{n}}  \int_{y_i}^{\infty} \! \omega_i(x_i-y_i) c_i(\mathbf{y}_i)P_{\infty}(\mathbf{y}_i) \left[ H(u(\mathbf{x})) -H(u(t,\mathbf{y}_i))\right] \, \mathrm{d}x_i  \right)\mathrm{d}\mathbf{y}_i\\
 &\,+\sum_{i=1}^{n}\left( \int_{{\mathbb{R}}_{+}^{n}}\left( k_m^i \int_{y_i}^{\infty} \! \omega_i(x_i-y_i) c_i(\mathbf{y}_i)P_{\infty}(\mathbf{y}_i)H'(u(\mathbf{x}))\left[u(t,\mathbf{y}_i)-u(t,\mathbf{x})\right]\, \mathrm{d}x_i \right) \right)\mathrm{d}\mathbf{y}_i,
\end{align*}
which is equivalent to the expression (\ref{eq:Dt_Gentropy_eqn_nD}) defined in Proposition \ref{prop:Hconvex_entropynD}, thus concluding the derivation of the identity. Observe finally that due to the convexity of $H(u)$, we deduce that $H(u)-H(v)+H'(u)(v-u) \le 0$ for all $u,v$ leading to final claim.
\end{proof}     
      
As in the one dimensional case, we will focus on the $L^2$-relative entropy, i.e., we choose 
$H(u)=(u-1)^2$ to define
\begin{equation*}
\mathcal{G}_2^n(u):=\int_{{\mathbb{R}}_{+}^{n}}\left(u(\mathbf{x}) -1\right)^2 P_{\infty} \mathrm{d}\mathbf{x}
\end{equation*}
and
\begin{align*}
\mathcal{D}_2^n(u)= \sum_{i=1}^{n}k_m^i\int_{{\mathbb{R}}_{+}^{n}}\int_{y_i}^{\infty}\omega_i(x_i-y_i)\left[ u(\mathbf{x}) - u(\mathbf{y}_i) \right]^2 c_i(\mathbf{y}_i)P_\infty(\mathbf{y}_i) \mathrm{d} x_i \mathrm{d} \mathbf{y}_i .
\end{align*}
Proposition \ref{prop:Hconvex_entropynD} leads to the relation
  \begin{equation}\label{eq:n_Dt_entropy_eqn}
    \dfrac{\mathrm{d}\mathcal{G}_2^n(u)}{\mathrm{d} t}
    =-\mathcal{D}_2^n(u)\leq 0.
	\end{equation}


\subsection{Approach to equilibrium}

Based on the assumption \ref{ass:ge} on stationary solutions, we are now able to control the entropy by the entropy production except for a small error term.

\begin{lemma}
	\label{lem:D_almost_H}
	Assume that $p \leq C_1 P_\infty$ for some $C_1 > 0$. Then, for each
	$\epsilon > 0$ there exists a constant $K_\epsilon > 0$ depending on
	$C_1$ and $\epsilon$ such that:
	\begin{equation*}
	\mathcal{G}_2^n(u)
	\leq K_\epsilon \mathcal{D}_2^n(u)
	+ \epsilon\,.
	\end{equation*}
\end{lemma}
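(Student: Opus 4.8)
The plan is to localise the telescoping ``intermediate reactions'' idea of Proposition~\ref{prop:exp_step1} to a large bounded box and to absorb everything outside that box into the error term, using the pointwise bound $0\le p\le C_1P_\infty$, i.e. $0\le u\le C_1$.

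First I would pass to the symmetric form of the entropy. Exactly as in Lemma~\ref{l:D2eqH2}, mass conservation $\int_{{\mathbb{R}}_{+}^{n}}p=\int_{{\mathbb{R}}_{+}^{n}}P_\infty=1$ gives
\[
  \mathcal{G}_2^n(u)=\int_{{\mathbb{R}}_{+}^{n}}P_\infty u^2\,\mathrm{d}\mathbf{x}-1
  =\tfrac12\int_{{\mathbb{R}}_{+}^{n}}\!\!\int_{{\mathbb{R}}_{+}^{n}}P_\infty(\mathbf{x})P_\infty(\mathbf{y})\big(u(\mathbf{x})-u(\mathbf{y})\big)^2\,\mathrm{d}\mathbf{x}\,\mathrm{d}\mathbf{y}=:\mathcal{H}_2^n(u).
\]
Given $\epsilon>0$, since $P_\infty\in L^1$ with unit mass I fix $0<\delta<1<R<\infty$ such that the box $Q:=(\delta,R)^n$ satisfies $\eta:=\int_{{\mathbb{R}}_{+}^{n}\setminus Q}P_\infty\le\epsilon/C_1^2$. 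Splitting $\mathcal{H}_2^n$ over $Q\times Q$ and its complement and bounding $(u(\mathbf{x})-u(\mathbf{y}))^2\le C_1^2$ on the complement, the complement part is at most $\tfrac{C_1^2}{2}\big(1-(1-\eta)^2\big)\le C_1^2\eta\le\epsilon$, while on $Q\times Q$ I use $P_\infty\le M_Q:=\sup_{\overline{Q}}P_\infty<\infty$ to get
\[
  \mathcal{G}_2^n(u)\le\tfrac{M_Q^2}{2}\int_Q\!\!\int_Q\big(u(\mathbf{x})-u(\mathbf{y})\big)^2\,\mathrm{d}\mathbf{x}\,\mathrm{d}\mathbf{y}+\epsilon.
\]

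It then remains to dominate $\int_Q\int_Q(u(\mathbf{x})-u(\mathbf{y}))^2$ by $\mathcal{D}_2^n(u)$. For $\mathbf{x},\mathbf{y}\in Q$ I interpolate one coordinate at a time: set $\mathbf{z}^{(0)}:=\mathbf{y}$ and $\mathbf{z}^{(i)}:=(x_1,\dots,x_i,y_{i+1},\dots,y_n)$, so consecutive $\mathbf{z}^{(i-1)},\mathbf{z}^{(i)}$ differ only in the $i$-th coordinate and $\mathbf{z}^{(n)}=\mathbf{x}$. Cauchy--Schwarz gives $(u(\mathbf{x})-u(\mathbf{y}))^2\le n\sum_{i=1}^n(u(\mathbf{z}^{(i)})-u(\mathbf{z}^{(i-1)}))^2$; the $i$-th summand depends only on $x_1,\dots,x_i,y_i,\dots,y_n$, so integrating the remaining $n-1$ variables over $(\delta,R)$ contributes a factor $(R-\delta)^{n-1}$, leaving a sum over $i$ of purely one-dimensional double integrals in the $i$-th variable with the other $n-1$ coordinates fixed. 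On the other hand, restricting the nonnegative integrand defining $\mathcal{D}_2^n(u)$ to the same box and bounding $\omega_i(x_i-y_i)\ge \tfrac1{b_i}e^{-(R-\delta)/b_i}$, $c_i\ge\varepsilon_i$, $P_\infty\ge m_Q:=\inf_{\overline{Q}}P_\infty>0$, and then using the symmetry of the squared increment in $(x_i,y_i)$ to drop the constraint $y_i<x_i$ at the cost of a factor $\tfrac12$, one obtains $\mathcal{D}_2^n(u)\ge c_\star\sum_{i=1}^n(\text{the very same one-dimensional double integrals})$ with $c_\star:=\tfrac12\,m_Q\min_i\big(k_m^i\varepsilon_i b_i^{-1}e^{-(R-\delta)/b_i}\big)>0$. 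Combining the two displays yields
\[
  \mathcal{G}_2^n(u)\le\frac{n\,M_Q^2\,(R-\delta)^{n-1}}{2\,c_\star}\,\mathcal{D}_2^n(u)+\epsilon,
\]
which is the claim, $K_\epsilon$ being the displayed prefactor; it depends only on $\epsilon$ (through $\delta,R$ and hence $m_Q,M_Q$), on $C_1$, and on the fixed parameters of the equation.

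The main obstacle is exactly what forces the error term: in the multidimensional setting there is no explicit formula for $P_\infty$ and no analogue of the fine bounds of Lemma~\ref{lema:Pinf_bound}, so the chain argument cannot be carried out on the unbounded region, and the mass of $P_\infty$ near the coordinate hyperplanes $\{x_i=0\}$ and at infinity can only be made small, not absorbed. A secondary technical point is that the steps above require $P_\infty$ to be bounded above and below by positive constants on each compact box; this I would take from continuity and strict positivity of $P_\infty$ on $(0,\infty)^n$ — properties implicit already in $u=p/P_\infty$ being well defined and consistent with Assumption~\ref{ass:ge}, which encodes the vanishing of $\gamma_x^i x_i P_\infty$ at the boundary of the domain.
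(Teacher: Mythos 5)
Your proposal is correct and follows essentially the same route as the paper's proof: symmetrise $\mathcal{G}_2^n$ into the double integral, split over a large box $\Omega_\delta\times\Omega_\delta$ versus its complement, absorb the complement into $\epsilon$ via the bound $u\leq C_1$, and on the box telescope coordinate-by-coordinate with Cauchy--Schwarz before bounding the production kernel $k_m^i\,\omega_i\,c_i\,P_\infty$ from below. The only differences are cosmetic (a box $(\delta,R)^n$ instead of $[\delta,1/\delta]^n$ and explicit constants in place of the paper's infimum defining $K_{\delta,2}$), and your explicit remark that $P_\infty$ must be bounded above and below on compact boxes is the same implicit requirement the paper uses.
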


\begin{proof}
By expanding the square, we can write
\begin{equation}
\label{eq:almost_bound0}
\mathcal{G}_2^n(u) = 
\frac12
\int_{\R^n_+} \int_{\R^n_+}
P_\infty(\mathbf{x})
P_\infty(\mathbf{y})
(u(t,\mathbf{x}) - u(\mathbf{y}))^2
\d \mathbf{x} \d \mathbf{y}.
\end{equation}
We split the latter integral in two parts: the integral over
$\Omega_\delta \times \Omega_\delta$, and the integral over its
complement  with 
$$
\Omega_{\delta}=\overbrace{\left[\delta, \ 1/\delta\right] \times \dots \times \left[\delta, \ 1/\delta\right]}^{n \text{ times}} \text{ such that, }  \delta \in (0, \ 1). 
$$ 
For the integral over the complement, using $p \leq C_1 P_\infty$, we deduce
$$
\iint_{\R^{2n}_+ \setminus (\Omega_\delta \times \Omega_\delta)}
P_\infty(\mathbf{x})
P_\infty(\mathbf{y})
(u(\mathbf{x}) - u(\mathbf{y}))^2
\d \mathbf{x} \d \mathbf{y}
\leq
2 C_1^2  \iint_{\R^{2n}_+ \setminus (\Omega_\delta \times \Omega_\delta)}
P_\infty(\mathbf{x})
P_\infty(\mathbf{y})
\d \mathbf{x} \d \mathbf{y}.
$$
On the other hand, for the integral over
$\Omega_\delta \times \Omega_\delta$ we get
\begin{equation*}
\int_{\Omega_\delta}
\int_{\Omega_\delta}
P_\infty(\mathbf{x})
P_\infty(\mathbf{y})
(u(\mathbf{x}) - u(\mathbf{y}))^2
\d \mathbf{x} \d \mathbf{y}
\leq
K_{\delta,1} 
\int_{\Omega_\delta}
\int_{\Omega_\delta}
(u(\mathbf{x}) - u(\mathbf{y}))^2
\d \mathbf{x} \d \mathbf{y},
\end{equation*}
where
\begin{equation*}
K_{\delta,1} := \sup_{(\mathbf{x}, \mathbf{y}) \in \Omega_\delta
	\times \Omega_\delta} P_\infty(\mathbf{x}) P_\infty(\mathbf{y})
< +\infty.
\end{equation*}
We now rewrite $u(\mathbf{x}) - u(\mathbf{y})$ as a sum of $n$
terms, each of which being a difference of values of $u$ at points
which differ only by one coordinate
\begin{equation*}
u(\mathbf{x}) - u(\mathbf{y})
=
\sum_{i=1}^n
\Big(
u(x_1, \dots, x_i, y_{i+1}, \dots, y_n)
- u(x_1, \dots, x_{i-1}, y_{i}, \dots, y_n)
\Big),
\end{equation*}
(where it is understood that
$u(x_1, \dots, x_i, y_{i+1}, \dots, y_n) = u(\mathbf{x})$ for $i=n$,
and $u(x_1, \dots,$ $ x_{i-1}, y_{i}, \dots, y_n) = u(\mathbf{y})$ for
$i=1$). Then, by Cauchy-Schwarz's inequality we have
\begin{align*}
\int_{\Omega_\delta} \int_{\Omega_\delta}
(u(\mathbf{x}) - & u(\mathbf{y}))^2
\d \mathbf{x} \d \mathbf{y}
\\
&\leq
n
\sum_{i=1}^n
\int_{\Omega_\delta} \int_{\Omega_\delta}
\Big( u(x_1, \dots, x_i, y_{i+1}, \dots, y_n)
- u(x_1, \dots, x_{i-1}, y_{i}, \dots, y_n) \Big)^2
\d \mathbf{x} \d \mathbf{y}
\\
&=
n \left(\frac{1}{\delta} - \delta \right)^{n-1}
\sum_{i=1}^n
\int_{[\delta, 1/\delta]^n}
\int_{[\delta, 1/\delta]}
\Big( u(\mathbf{x})
- u(\mathbf{y}_i)
\Big)^2
\d x_i \d\mathbf{y}_i
\\
&=
2n  \left(\frac{1}{\delta} - \delta \right)^{n-1}
\sum_{i=1}^n
\int_{[\delta, 1/\delta]^n}
\int_{y_i}^{1/\delta}
\Big( u(\mathbf{x})
- u(\mathbf{y}_i)
\Big)^2
\d x_i \d\mathbf{y}_i
\\
&\leq
K_{\delta,2}
\sum_{i=1}^n
k_m^i
\int_{[\delta, 1/\delta]^n}
\int_{y_i}^{1/\delta}
\omega_i(x_i-y_i) c_i(\mathbf{y}_i) P_\infty(\mathbf{y}_i)
\Big( u(\mathbf{x})
- u(\mathbf{y}_i)
\Big)^2
\d x_i \d\mathbf{y}_i,
\end{align*}
therefore we conclude that
\begin{equation}
\label{eq:almost_bound1}
\int_{\Omega_\delta} \int_{\Omega_\delta}
(u(\mathbf{x}) - u(\mathbf{y}))^2
\d \mathbf{x} \d \mathbf{y}
\leq
K_{\delta,2} \mathcal{D}_2(p),
\end{equation}
where $K_{\delta,2}$ is defined by
\begin{equation*}
2 n  \left(\frac{1}{\delta} - \delta \right)^{n-1}  K_{\delta,2}^{-1}
= \inf \big( k_m^i \omega_i(x_i-y_i) c_i(\mathbf{y}_i)
P_\infty(\mathbf{y}_i) \big),
\end{equation*}
with the infimum running over all $i = 1, \dots, n$ and over all the
points in the domain of integration. We notice that the
first of the equalities in \eqref{eq:almost_bound1} is just obtained
by integrating in the variables that do not appear in the expression
and renaming the others; and the second equality is due to the
symmetry of the integrand in the variables $(x_i, y_i)$. Using
\eqref{eq:almost_bound0}--\eqref{eq:almost_bound1} finally gives:
\begin{equation*}
\mathcal{G}_2^n(u)
\leq
C_1^2  \iint_{\R^{2n}_+ \setminus (\Omega_\delta \times \Omega_\delta)}
P_\infty(\mathbf{x})
P_\infty(\mathbf{y})
\d \mathbf{x} \d \mathbf{y}
+
\frac12 K_{\delta,1} K_{\delta,2} \mathcal{D}_2^n(u).
\end{equation*}
We may choose $\delta > 0$ such that the first term is smaller than
$\epsilon$. This gives then the result with
$K_\epsilon = \frac12 K_{\delta,1} K_{\delta,2}$.
\end{proof}

\begin{theorem}[Long-time behaviour]
  Given any mild solution $p$ with normalised nonnegative initial data
  $p_0 \in L^1(\R_+)$ to equation \eqref{eq:MEF_generalise} and given
  a stationary solution $P_\infty(\mathbf{x})$ to
  \eqref{eq:MEF_generalise} satisfying assumption \ref{ass:ge}, then
	\begin{equation*}
	\lim\limits_{t \rightarrow \infty}
	\int_{{\mathbb{R}}_{+}^{n}}|p(t,\mathbf{x})-P_\infty(\mathbf{x})|^2
	\mathrm{d} \mathbf{x} = 0.
	\end{equation*}
        As a consequence, stationary solutions $P_\infty(\mathbf{x})$
        of \eqref{eq:MEF_generalise} satisfying assumption
        \ref{ass:ge}, if they exist, they are unique.
\end{theorem}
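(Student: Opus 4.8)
The plan is to run a ``defective'' entropy--entropy production estimate together with Grönwall's inequality, and then to upgrade the resulting decay of the relative entropy to the stated $L^2$ convergence. I would first prove the statement for initial data with bounded density ratio, $u_0 := p_0/P_\infty \in L^\infty(\R_+^n)$, and write $C_1 := \sup_{\mathbf{x}\in\R_+^n} u_0(\mathbf{x})$; then $\mathcal{G}_2^n(u_0) \le \max\{1,(C_1-1)^2\}\int_{\R_+^n}P_\infty = \max\{1,(C_1-1)^2\} < \infty$. By the maximum principle (item (v) of the list of properties recalled at the beginning of Section~\ref{sec:nd}, extended from classical to mild solutions by the approximation argument of Section~\ref{sec:exp_conv}) one has $0 \le u(t,\mathbf{x}) \le C_1$ for all $t \ge 0$, so $p(t) \le C_1 P_\infty$ \emph{uniformly in time}, and Lemma~\ref{lem:D_almost_H} is available along the whole trajectory. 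The general case $p_0 \in L^1(\R_+^n)\cap L^2(\R_+^n,P_\infty^{-1})$ is recovered at the end by approximating $p_0$ in $L^2(P_\infty^{-1})$ by its normalised truncations $\min\{p_0,kP_\infty\}/\|\min\{p_0,kP_\infty\}\|_1$ (which have bounded density ratio) and using, via linearity, the $L^2(P_\infty^{-1})$--decrease of the solution map (item (iv), with $q=2$, applied to the difference of two solutions).

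Set $G(t) := \mathcal{G}_2^n(u(t))$; it is nonnegative, finite, and for classical solutions (hence for mild solutions by approximation) satisfies $G'(t) = -\mathcal{D}_2^n(u(t))$ by \eqref{eq:n_Dt_entropy_eqn}. Fix $\epsilon>0$ and let $K_\epsilon>0$ be the constant of Lemma~\ref{lem:D_almost_H} attached to the bound $p\le C_1P_\infty$. Combining the lemma with the dissipation identity yields
\[
  G(t) \;\le\; K_\epsilon\,\mathcal{D}_2^n(u(t)) + \epsilon \;=\; -K_\epsilon\,G'(t) + \epsilon
  \qquad\Longrightarrow\qquad
  \ddt\big(G(t)-\epsilon\big) \;\le\; -\tfrac{1}{K_\epsilon}\big(G(t)-\epsilon\big),
\]
so Grönwall's lemma gives $G(t)-\epsilon \le \big(G(0)-\epsilon\big)e^{-t/K_\epsilon}$ and hence $\limsup_{t\to\infty}G(t)\le\epsilon$. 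As $\epsilon>0$ is arbitrary and $G\ge0$, this proves $\mathcal{G}_2^n(u(t)) = \int_{\R_+^n}P_\infty^{-1}\,|p(t,\mathbf{x})-P_\infty(\mathbf{x})|^2\d\mathbf{x}\to0$ as $t\to\infty$. Note that $1/K_\epsilon$ degenerates as $\epsilon\to0$, which is exactly why, in contrast with Theorem~\ref{thm:exp}, one obtains convergence without an explicit rate.

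It remains to pass from this weighted convergence to $\int_{\R_+^n}|p(t,\mathbf{x})-P_\infty(\mathbf{x})|^2\d\mathbf{x}\to0$. For $\delta\in(0,1)$ set $\Omega_\delta := [\delta,1/\delta]^n$. On $\Omega_\delta$ the equilibrium is bounded, $M_\delta := \sup_{\Omega_\delta}P_\infty<\infty$, so $\int_{\Omega_\delta}|p(t)-P_\infty|^2 \le M_\delta\,\mathcal{G}_2^n(u(t))\to0$. On the complement, using $0\le u(t,\cdot)\le C_1$ we have $(u-1)^2\le\max\{1,(C_1-1)^2\}$, whence $\int_{\R_+^n\setminus\Omega_\delta}|p(t)-P_\infty|^2 = \int_{\R_+^n\setminus\Omega_\delta}P_\infty^2(u(t)-1)^2 \le \max\{1,(C_1-1)^2\}\int_{\R_+^n\setminus\Omega_\delta}P_\infty^2$, and the last integral tends to $0$ as $\delta\to0$ because of the integrability of $P_\infty^2$ near the origin and at infinity, which is part of the assumed good behavior of the stationary state. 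Letting $t\to\infty$ and then $\delta\to0$ gives the claim. For uniqueness, if $\tilde P_\infty$ is another normalised stationary solution satisfying Assumption~\ref{ass:ge}, then $p(t)\equiv\tilde P_\infty$ is the mild solution with datum $\tilde P_\infty$, and applying the convergence result with equilibrium $P_\infty$ gives $\int_{\R_+^n}|\tilde P_\infty - P_\infty|^2 = \lim_{t\to\infty}\int_{\R_+^n}|p(t)-P_\infty|^2 = 0$, so $\tilde P_\infty = P_\infty$ a.e.

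The Grönwall step is routine once Lemma~\ref{lem:D_almost_H} is available, so I expect the main obstacle to lie in the two reductions surrounding it. The first is securing the uniform-in-time bound $p(t)\le C_1P_\infty$: this forces us to start from initial data with $u_0$ bounded and then recover general $L^1\cap L^2(P_\infty^{-1})$ data by truncation plus the contraction estimate, and it also requires positivity, the maximum principle and the $L^q$ bounds to be known for mild and not merely classical solutions, which is handled by the approximation scheme of Section~\ref{sec:exp_conv}. The second, more structural, difficulty is the conversion from $L^2(P_\infty^{-1})$ (relative-entropy) convergence to plain $L^2$ convergence carried out above: this genuinely uses that $P_\infty$ is not too singular at the origin, so that $P_\infty^2$ stays integrable there --- precisely the kind of qualitative control on the equilibrium that is unavailable a priori in the multidimensional setting and therefore has to be postulated through Assumption~\ref{ass:ge} and the accompanying hypotheses on $P_\infty$.
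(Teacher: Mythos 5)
Your core argument for ``nice'' data is correct and is essentially the paper's Step 1 with a cosmetic variation: where you run Gr\"onwall on the defective inequality $\mathcal{G}_2^n \le K_\epsilon\,\mathcal{D}_2^n + \epsilon$, the paper integrates \eqref{eq:n_Dt_entropy_eqn} in time to get $\int_0^\infty \mathcal{D}_2^n(u)(t)\,\mathrm{d}t<\infty$, extracts a sequence $t_s$ with $\mathcal{D}_2^n(u)(t_s)\to 0$, and uses monotonicity of $\mathcal{G}_2^n$; both yield $\mathcal{G}_2^n(u)(t)\to 0$ and the two closings are interchangeable. The genuine divergence, and where your proposal has gaps, is in the reduction to general data and in the final norm. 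First, the statement only assumes $p_0\in L^1$ normalised and nonnegative, but your extension approximates $p_0$ in $L^2(P_\infty^{-1})$ and invokes the $L^2(P_\infty^{-1})$-contraction, so it only covers $p_0\in L^1\cap L^2(P_\infty^{-1})$. The paper instead approximates by truncations $p_0^s\le s P_\infty$ and uses the plain $L^1$-contraction together with $\mathcal{G}_2^n(u_s)\ge \|p^s-P_\infty\|_1^2$, which covers every normalised $L^1$ datum. This matters precisely for the uniqueness claim: in your scheme you apply the convergence result with initial datum $\tilde P_\infty$, but nothing guarantees $\tilde P_\infty\in L^2(P_\infty^{-1})$ (or $\tilde P_\infty\le C_1 P_\infty$), so your reduction does not reach it; the paper's $L^1$ route needs only $\tilde P_\infty\in L^1$, which is automatic for a normalised stationary state.

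Second, your passage from the weighted convergence $\int P_\infty^{-1}|p-P_\infty|^2\to 0$ to the unweighted $\int|p-P_\infty|^2\to 0$ uses that $\int_{\R^{n}_+\setminus\Omega_\delta}P_\infty^2\to 0$ as $\delta\to 0$, i.e.\ $P_\infty\in L^2(\R^n_+)$. This is not contained in Assumption \ref{ass:ge} nor anywhere in the stated hypotheses, and it is not innocuous: already in the explicit one-dimensional equilibrium \eqref{eq:analytic_sol} one has $P_\infty\simeq x^{a-1}$ (or $x^{a\varepsilon-1}$) near the origin, so $P_\infty\notin L^2$ whenever $a$ (resp.\ $a\varepsilon$) is below $1/2$; asserting it as ``part of the assumed good behavior'' is an additional hypothesis, not a consequence. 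To be fair, the paper is itself loose on this point --- its proof establishes the weighted $L^2$ convergence for data with bounded ratio and $L^1$ convergence for general data, and does not explicitly bridge to the plain $L^2$ quantity in the statement --- but your write-up presents the missing integrability as if it were available, which it is not. If you either state $P_\infty\in L^\infty$ (or $L^2$) as an extra assumption, or content yourself with convergence in $L^2(P_\infty^{-1})$ and $L^1$ as the paper effectively does, and replace your approximation step by the $L^1$-contraction argument, the proof is complete.
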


\begin{proof}
\textbf{Step 1: Proof for ``nice'' initial data.} We first prove the
result for initial data $p_0 \in L^1(\R_+)$ such that
$p_0 \leq C_1 P_\infty$, for some constant $C_1 > 0$. Observe that
this implies in particular that
$p_0 \in L^2(\R_+, P_\infty(\mathbf{x})^{-1} \d \mathbf{x})$. For
such initial data we deduce that for all $t \geq 0$
\begin{equation*}
p(t,\mathbf{x}) \leq C_1 P_\infty(\mathbf{x})
\quad \text{ for almost all $\mathbf{x} \in \R^n_+$}\,,
\end{equation*}
from the maximum principle. This enables us to use Lemma \ref{lem:D_almost_H}. Using
the general entropy identity with $H(u)=(u-1)^2$, from Proposition
\ref{prop:Hconvex_entropynD} we obtain:
\begin{equation}
\label{aux1_corol_nD}
\dfrac{\mathrm{d} \mathcal{G}_2^n(u)}{\mathrm{d} t}
=
-\mathcal{D}_2^n(u).
\end{equation}
Next, by using time integration on $[0,T]$ in equation (\ref{aux1_corol_nD}),
the following equality holds for all $T>0$:
\begin{align*}
\mathcal{G}_2^n(u)(T)
+
\int_0^T \mathcal{D}_2^n(p)(t) \d t
=   \mathcal{G}_2^n(u)(0),
\end{align*}
from which we deduce that:
\begin{equation}\label{eq:intD}
\int_0^\infty \mathcal{D}_2^n(u)(t) \d t < \infty.
\end{equation}
From (\ref{eq:intD}), there exists a sequence $(t_s)_{s \geq 1}$ such
that $\mathcal{D}_2^n(u)(t_s) \to 0$ as $s \to +\infty$. Thus if we take
any $\epsilon > 0$, then Lemma \ref{lem:D_almost_H} gives:
\begin{equation*}
\mathcal{G}_2^n(u)(t_s)
\leq K_\epsilon \mathcal{D}_2^n(u)(t_s) + \epsilon
\to \epsilon
\quad \text{as $s \to +\infty$}.
\end{equation*}
Since $\mathcal{G}_2^n(u)(t)$ is decreasing in $t$,
this shows that $\lim_{t \to +\infty} \mathcal{G}_2^n(u)(t) \leq \epsilon$. Since $\epsilon$ is arbitrary chosen, we deduce
that:
\begin{equation*}
\mathcal{G}_2^n(u)(t)
\to 0
\quad \text{as $t \to +\infty$}.
\end{equation*}
\textbf{Step 2: Proof for all integrable initial data.} It is now classical to extend the result in step 1 to all initial data in $L^1(\R^n_+)$ by the $L^1$-contraction principle. In fact, any
$p_0 \in L^1(\R^n_+)$ can be approximated in $L^1(\R^n_+)$ by a sequence $(p_0^s)_{s \geq 1}$ such that $p_0^s \leq s P_\infty$, for all $s \geq 1$.  Thus consider the solution $p^s$ associated to initial data $p_0^s$. By step 1, we get
\begin{equation*}
\int_0^\infty |p^s(t, \mathbf{x}) - P_\infty(\mathbf{x})| \d \mathbf{x}
\to 0
\quad \text{as $t \to +\infty$},
\end{equation*}
since
$\mathcal{G}_2^n(u_s)(t) \geq \| p^s(t, \mathbf{x}) - P_\infty(\mathbf{x})\|_1^2$ with $u^s=\frac{p^s}{P_\infty}$. Hence, for $s \geq 1$ we deduce
\begin{align*}
\int_0^\infty |p(t, \mathbf{x}) - P_\infty(\mathbf{x})| \d
\mathbf{x}&
\leq
\int_0^\infty |p(t, \mathbf{x}) - p^s(t, \mathbf{x})| \d \mathbf{x}
+ \int_0^\infty | p^s(t, \mathbf{x}) - P_\infty(\mathbf{x})| \d \mathbf{x}
\nonumber \\
&\leq
\int_0^\infty |p_0(\mathbf{x}) - p_0^s(\mathbf{x})| \d \mathbf{x}
+ \int_0^\infty | p^s(t, \mathbf{x}) - P_\infty(\mathbf{x})| \d \mathbf{x}\,,
\end{align*}
from the $L^1$-contraction principle. This easily leads to the result since
$$
\lim_{s \to \infty} \int_0^\infty |p_0(\mathbf{x}) - p_0^s(\mathbf{x})| \d \mathbf{x}=0
\qquad \mbox{and} \qquad
\lim_{t \to \infty} \int_0^\infty | p^s(t, \mathbf{x}) - P_\infty(\mathbf{x})| \d \mathbf{x}=0\,,
$$
for all $s\geq 1$.
\end{proof}


\subsection{Numerical exploration of the convergence rates}

The entropy functional, $\mathcal{G}_2^n(u)(t)$, is represented in the plots B of Figures
\ref{fig:EX2_2D}-\ref{fig:EX4_2D}, which address three possible steady
states (plots A of Figures \ref{fig:EX2_2D}-\ref{fig:EX4_2D}) that
have been obtained using the SELANSI toolboox \cite{pajaroetal17b}. For all cases, these functions are represented in a semi-logarithm scale to numerically check if the convergence
shown in the previous section is exponential in higher dimensions.

In the first example, Figure \ref{fig:EX2_2D}, we consider two
different self-regulated proteins with input functions:
\begin{equation*}
  c_i(x_i)=\frac{K_i^{H_i}
    + \varepsilon_i x_i^{H_i}}{K_i^{H_i} + x_i^{H_i}}
  \quad \text{for $i=1,2$},	
\end{equation*}
with $H_i=-4$, $\varepsilon_i=0.15$, $K_i=45$, $a_i=5$ and $b_i=10$ as
in the example depicted in Figure \ref{fig:case1}.

The second example, Figure \ref{fig:EX1_2D}, is a self and
cross-regulated gene network expressing two different proteins where
the first one activates the production of both itself and the second
protein, while the second protein inhibits the expression of both
proteins. The input functions considered, as in \cite{pajaroetal17},
read:
\begin{equation}\label{eq:hill_multivariate}
\begin{array}{rl}
c_1(\mathbf{x})= &   \dfrac{\epsilon_{11}x_1^{H_{11}}x_2^{H_{12}} + \epsilon_{12}K_{11}^{H_{11}}x_2^{H_{12}} + \epsilon_{13}x_1^{H_{11}}K_{12}^{H_{12}} + K_{11}^{H_{11}}K_{12}^{H_{12}}}{x_1^{H_{11}}x_2^{H_{12}} + K_{11}^{H_{11}}x_2^{H_{12}} + x_1^{H_{11}}K_{12}^{H_{12}} + K_{11}^{H_{11}}K_{12}^{H_{12}}},  \\
&\\
c_2(\mathbf{x})= &   \dfrac{\epsilon_{21}x_2^{H_{22}}x_1^{H_{21}} + \epsilon_{22}K_{22}^{H_{22}}x_1^{H_{21}} + \epsilon_{23}x_2^{H_{22}}K_{21}^{H_{21}} + K_{22}^{H_{22}}K_{21}^{H_{21}}}{x_2^{H_{22}}x_1^{H_{21}} + K_{22}^{H_{22}}x_1^{H_{21}} + x_2^{H_{22}}K_{21}^{H_{21}} + K_{22}^{H_{22}}K_{21}^{H_{21}}},
\end{array}
\end{equation}
with $H_{11}=-4$, $H_{21}=-6$, $H_{12}=H_{22}=2$, $K_{11}=K_{12}=45$, $K_{21}=K_{22}=70$, $\varepsilon_{11}=\varepsilon_{21}=0.002$, $\varepsilon_{12}=0.02$, $\varepsilon_{22}=0.1$, $\varepsilon_{13}=\varepsilon_{23}=0.2$ and network parameters $\gamma_x^1=\gamma_x^2=1$, $\gamma_m^1=\gamma_m^2=25$, $k_m^1=10$, $k_m^2=20$, $b_1=10$ and $b_2=20$.

Our third example, figure \ref{fig:EX4_2D}, corresponds to a mutual
repressing network of two genes in which the protein produced by the
expression of one gene inhibits the production of the other protein in
the network. The input functions, as in \cite{pajaroetal17}, for this
example take the following form:
\begin{equation}\label{eq:hill_cross_reg}
c_1(\mathbf{x})= \dfrac{K_1^{H_{12}}+\varepsilon_1 x_2^{H_{12}}}{K_1^{H_{12}}+x_2^{H_{12}}}, \qquad
c_2(\mathbf{x})= \dfrac{K_2^{H_{21}}+\varepsilon_2 x_1^{H_{21}}}{K_2^{H_{21}}+x_1^{H_{21}}},
\end{equation}
with $H_{12}=H_{21}=4$, $K_{1}=K_{2}=45$ and
$\varepsilon_{1}=\varepsilon_{2}=0.15$. The dimensionless network
parameters are $\gamma_x^1=\gamma_x^2=1$, $\gamma_m^1=\gamma_m^2=25$,
$k_m^1=k_m^2=8$ and $b_1=b_2=16$.

\begin{figure}[H]
	\centering
	\begin{minipage}[t]{0.49\linewidth}
		\centering
		\textbf{A}\\
		\includegraphics[width=1\textwidth]{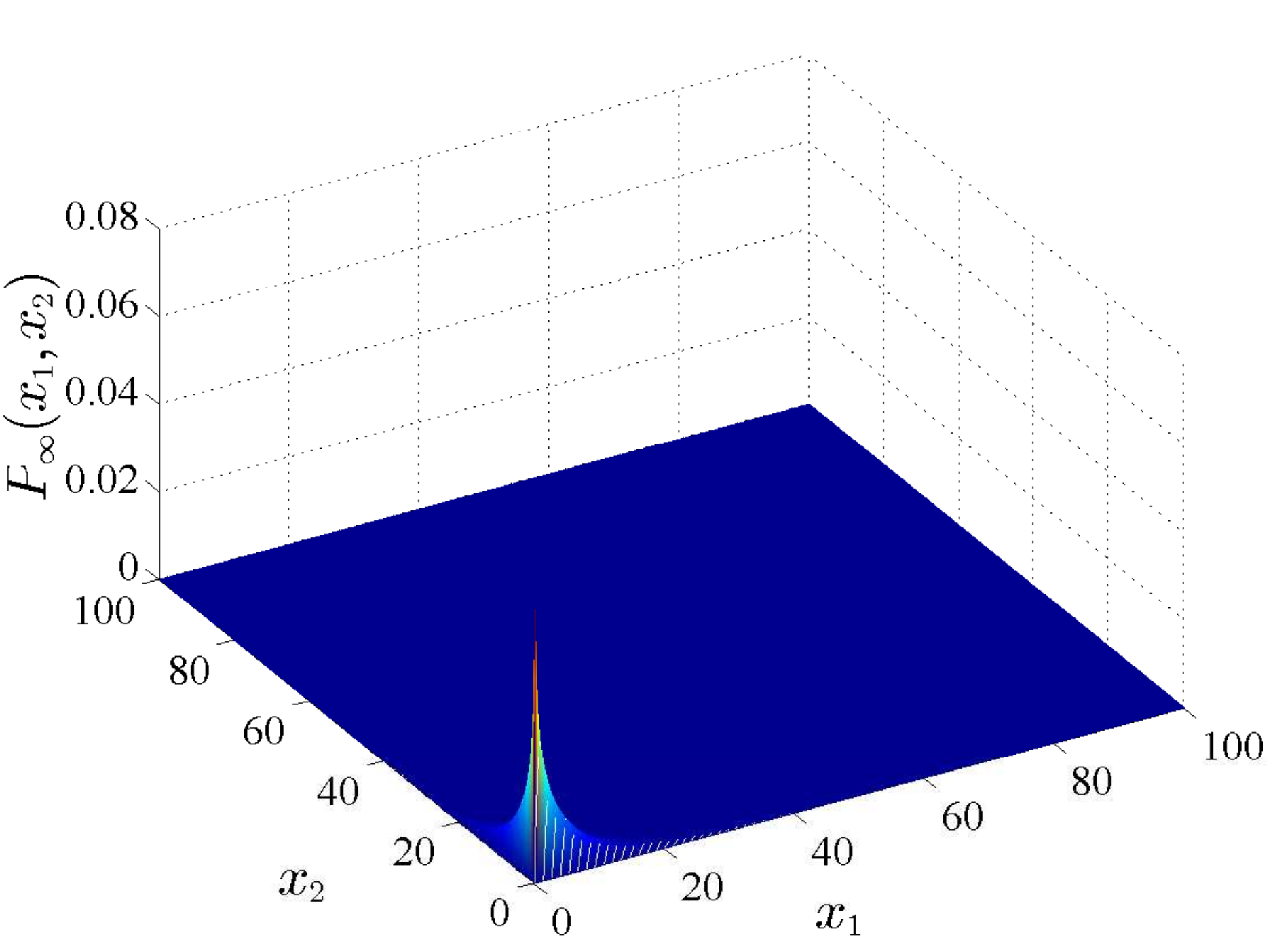}
	\end{minipage}
	\begin{minipage}[t]{0.49\linewidth}
		\centering
		\textbf{B}\\
		\includegraphics[width=1\textwidth]{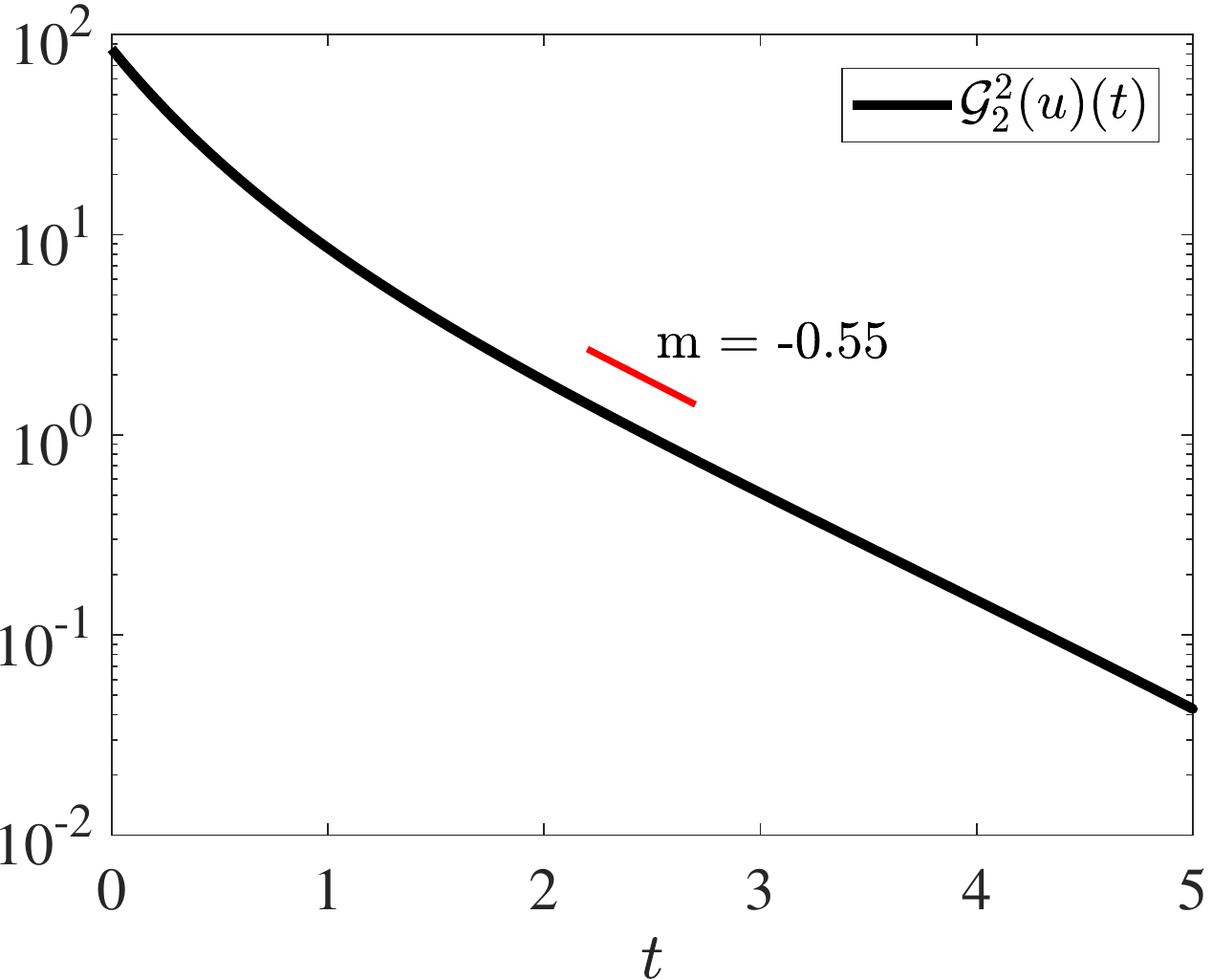}
	\end{minipage}
	\caption{Example of two self regulated proteins whose
          distribution has a peak in $\mathbf{x}=(0,0)$. (Same
          parameters as in the example depicted in Figure
          \ref{fig:case1} for both proteins)}
	\label{fig:EX2_2D}
\end{figure}
\begin{figure}[H]
	\centering
	\begin{minipage}[t]{0.49\linewidth}
		\centering
		\textbf{A}\\
		\includegraphics[width=1\textwidth]{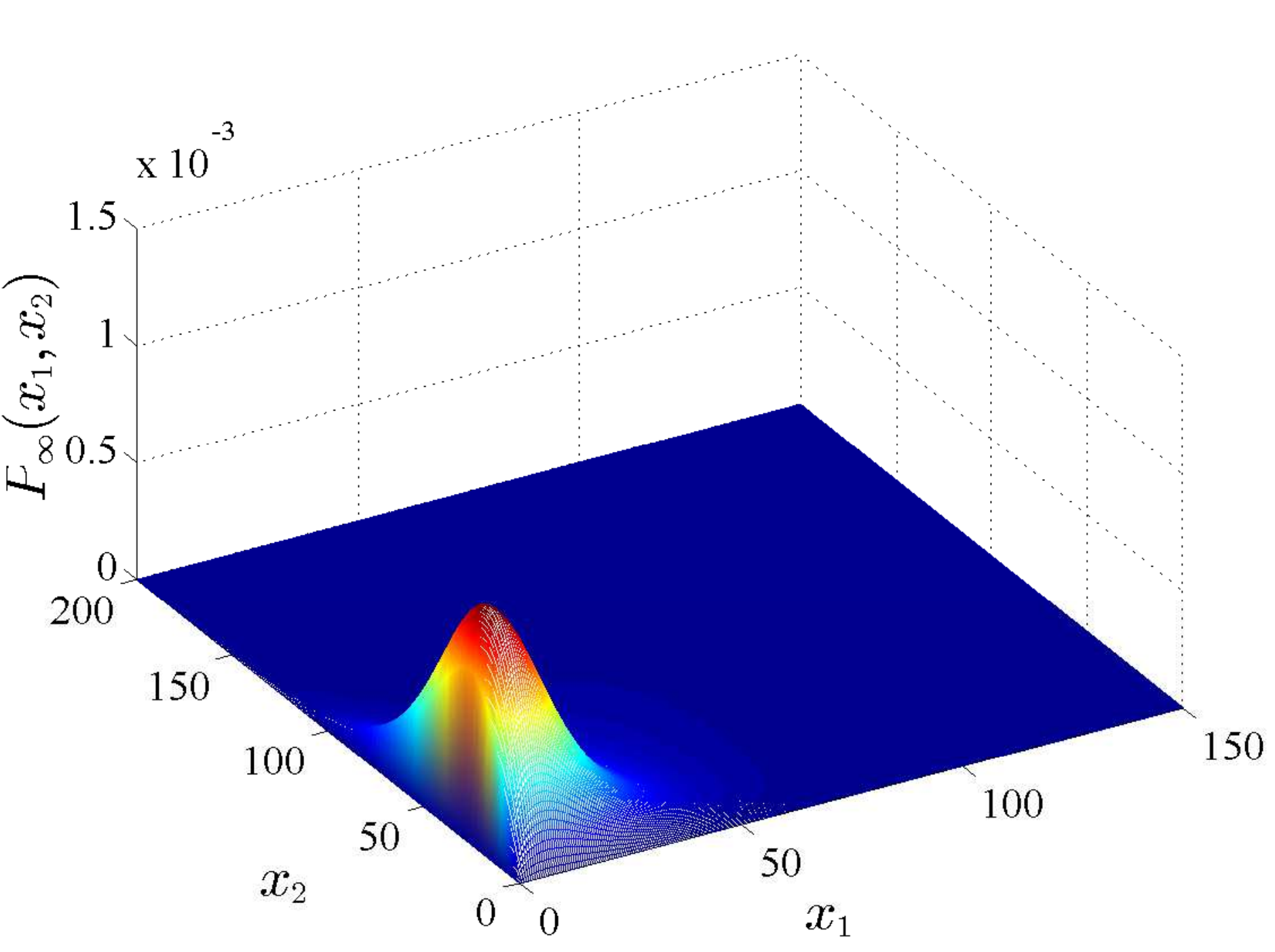}
	\end{minipage}
	\begin{minipage}[t]{0.49\linewidth}
		\centering
		\textbf{B}\\
		\includegraphics[width=1\textwidth]{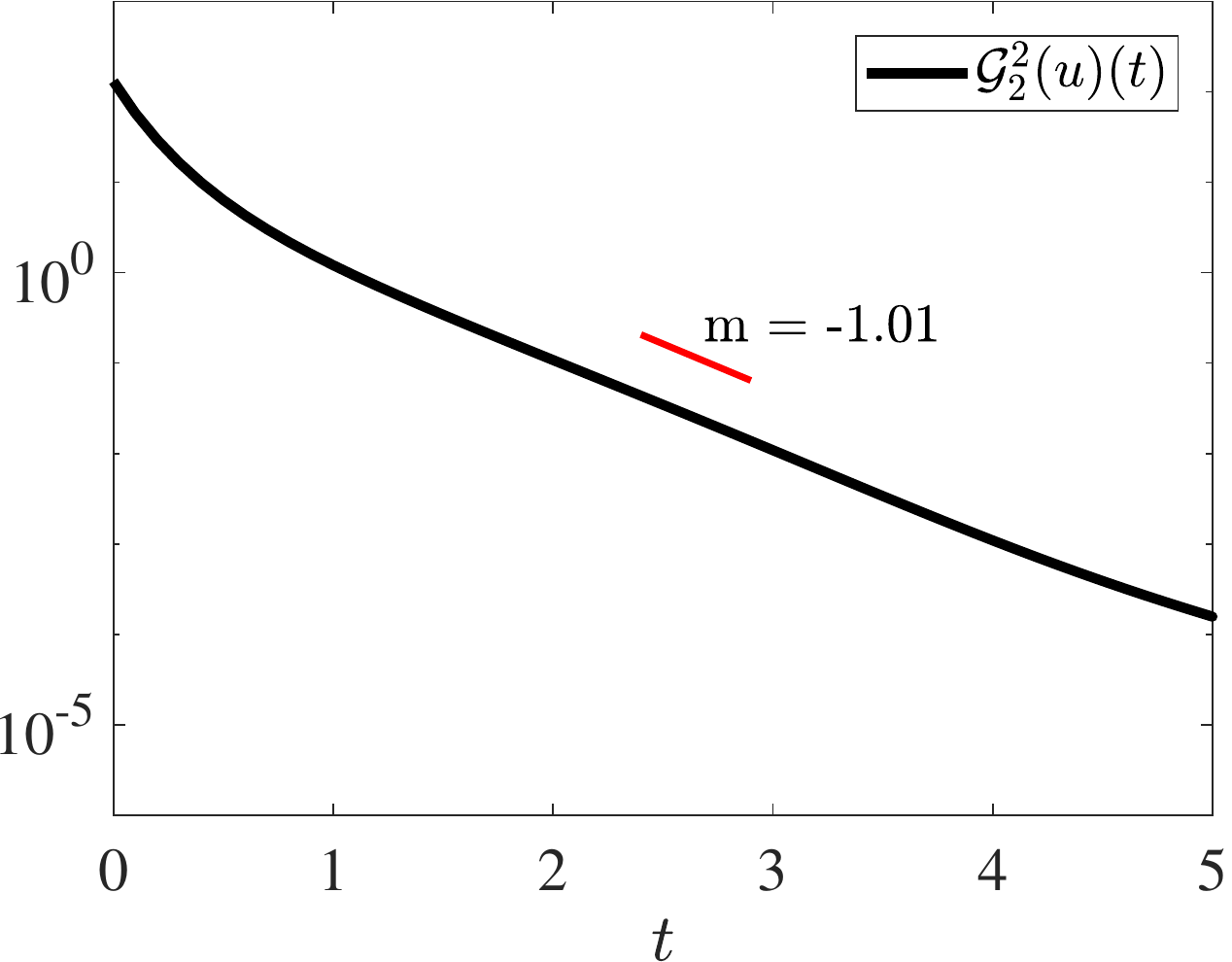}
	\end{minipage}
	\caption{Example of two self and cross regulated proteins
          whose distribution has a peak in some positive point
          $\mathbf{x}=(x_1,x_2)$ with $x_1>0$ and $x_2>0$. Parameters:
          $\gamma_x^1=\gamma_x^2=1$, $\gamma_m^1=\gamma_m^2=25$,
          $k_m^1=10$, $k_m^2=20$, $b_1=10$, $b_2=20$ and input
          functions in \eqref{eq:hill_multivariate}.}
	\label{fig:EX1_2D}
\end{figure}
\begin{figure}[H]
	\centering
	\begin{minipage}[t]{0.49\linewidth}
		\centering
		\textbf{A}\\
		\includegraphics[width=1\textwidth]{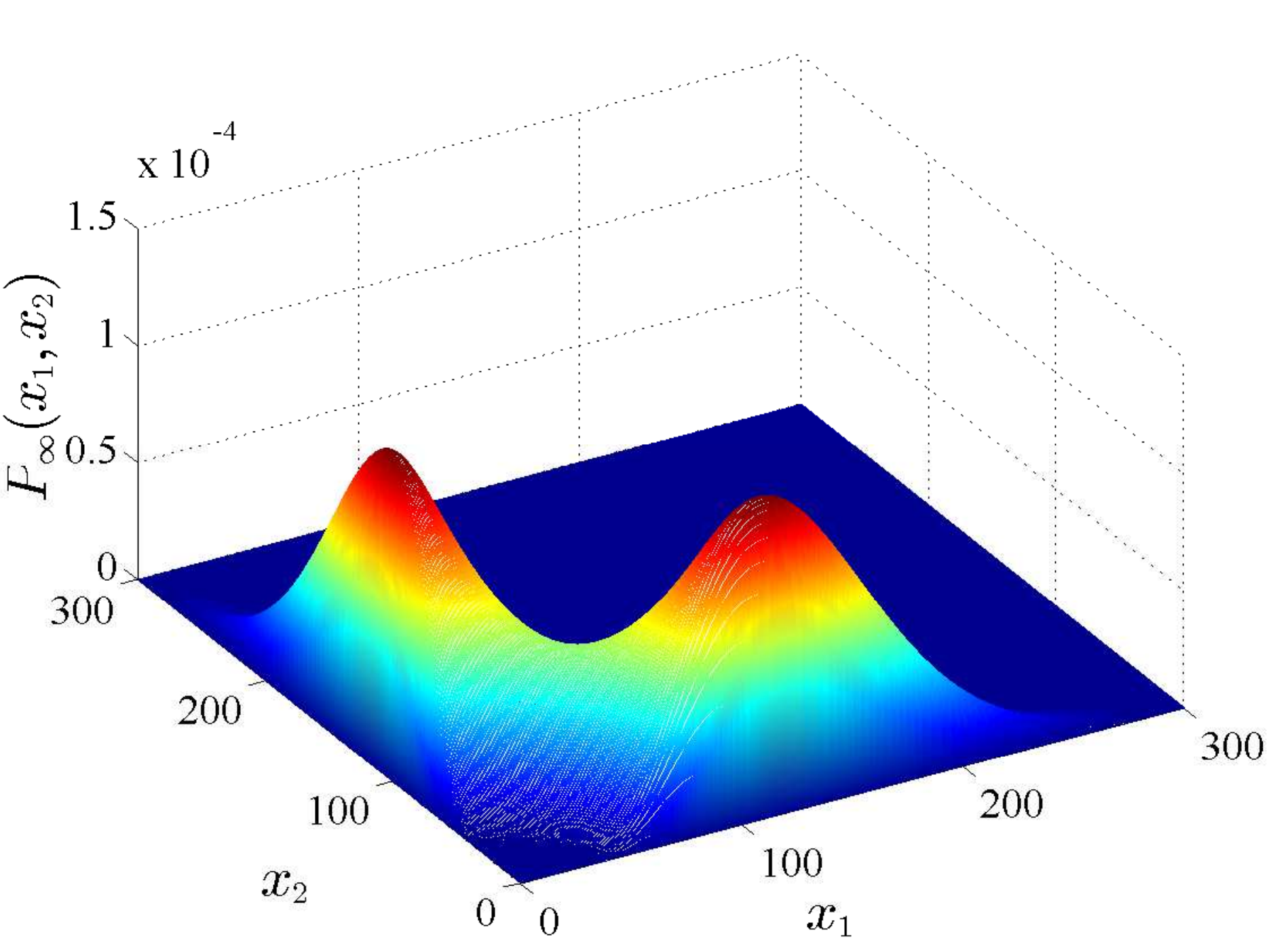}
	\end{minipage}
	\begin{minipage}[t]{0.49\linewidth}
		\centering
		\textbf{B}\\
		\includegraphics[width=1\textwidth]{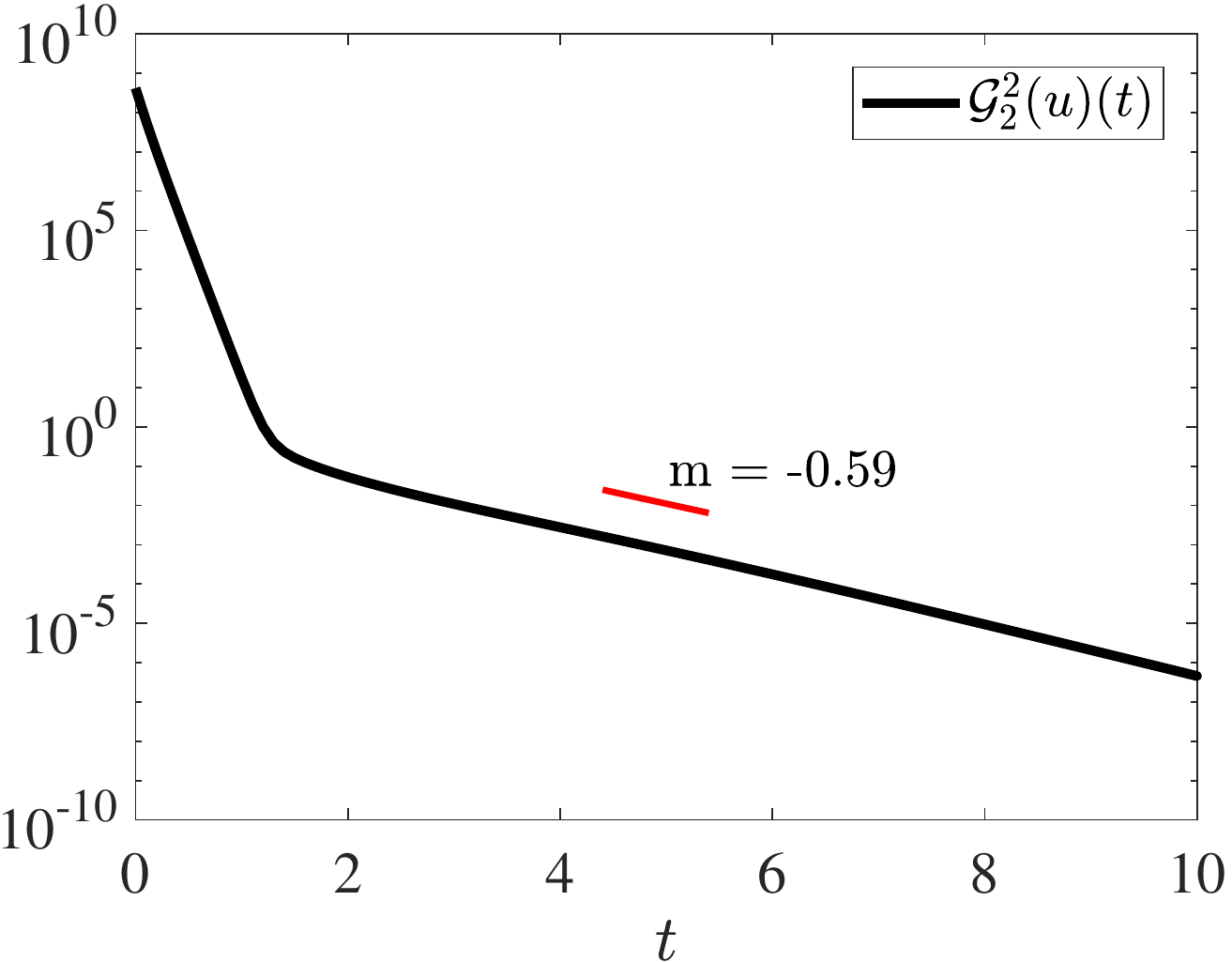}
	\end{minipage}
	\caption{Example of two  mutual repressed proteins whose joint distribution is bimodal attaining two peaks in two positive points. Parameters:$\gamma_x^1=\gamma_x^2=1$, $\gamma_m^1=\gamma_m^2=25$, $k_m^1=k_m^2=8$ and $b_1=b_2=16$ with input functions defined in \eqref{eq:hill_cross_reg}.}
	\label{fig:EX4_2D}
\end{figure}

\section*{Acknowledgements}

J.~A.~Cañizo and J.~A.~Carrillo were supported by projects
MTM2014-52056-P and MTM2017-85067-P, funded by the Spanish government
and the European Regional Development Fund. J.~A.~Carrillo was
partially supported by the EPSRC grant number EP/P031587/1. M. Pájaro acknowledges support from Spanish MINECO fellowships BES-2013-063112, EEBB-I-16-10540 and EEBB-I-17-12182.

\bibliographystyle{abbrv}

\bibliography{biblioentropia}  

\end{document}